\definecolor{cj1}{rgb}{0.9, 0.9, 0.9}
\definecolor{cj2}{rgb}{0.85, 0.85, 0.85}
\definecolor{cj3}{rgb}{0.7, 0.7, 0.7}
\definecolor{cj4}{rgb}{0.55, 0.55, 0.55}
\newcommand{\Lps}{\mathcal{L}} % Loops
\newcommand{\Sched}{\sigma}	%Schedule
\newcommand{\Pg}{\mathcal{P}}	% Progression
\newcommand{\Wfl}{\mathcal{W}} % Weight of progression
\newcommand{\Sx}{\mathcal{J}}	% Set J in NP-hardness sum w_jC_j
\newcommand{\St}{\mathcal{S}}	% Set T in NP-hardness sum w_jC_j 
\newcommand{\FDP}{V}	% Function in Dynamic programming 
\newcommand{\Rto}{\rho}	 % Approx ratio
\newcommand{\OFV}{Z}	 % Obj Fkt Value
\newcommand{\OFVV}{\phi}	 % Obj Fkt Value
\newcommand{\Jbs}{\mathcal{J}} % Set of jobs in FPTAS
\theoremstyle{thmstyleone}%
\newtheorem{theorem}{Theorem}[section]% meant for sectionwise numbers
\newtheorem{example}[theorem]{Example}%
\newtheorem{definition}[theorem]{Definition}%
\newtheorem{lemma}[theorem]{Lemma}%
\newtheorem{corollary}[theorem]{Corollary}%
\tikzset{job/.style={rectangle,draw,anchor=west,minimum height=0.75cm,minimum width=0.75cm}}				
\tikzset{job1/.style={rectangle,draw,anchor=west,minimum height=0.75cm,minimum width=0.75cm}}
\tikzset{job2/.style={rectangle,draw,anchor=west,minimum height=0.75cm,minimum width=0.75cm, pattern = north east lines}}				
\tikzset{job3/.style={rectangle,draw,anchor=west,minimum height=0.75cm,minimum width=0.75cm, pattern = vertical lines}}
\tikzset{job4/.style={rectangle,draw,anchor=west,minimum height=0.75cm,minimum width=0.75cm, pattern = dots}}
\tikzset{job5/.style={rectangle,draw,anchor=west,minimum height=0.75cm,minimum width=0.75cm, pattern = horizontal lines}}
\tikzset{job_gr/.style={rectangle,draw,anchor=west,minimum height=0.75cm,minimum width=0.75cm, opacity=0.05}}
\tikzset{job_tr/.style={rectangle,anchor=west,minimum height=0.75cm,minimum width=0.75cm}}
\tikzset{lab/.style={fill=white,inner sep=1pt}}
\tikzset{lab2/.style={fill=white,inner sep=1pt, opacity=0.1}}
\begin{document}

\title{Flow shops with reentry:}
\subtitle{The total weighted completion time objective}%Total weighted completion time near-optimal schedules

%Maximilian von Aspern, Felix Buld

\author[1]{\fnm{Maximilian} \sur{von Aspern}}\email{maximilian.aspern@tum.de}

\author[1]{\fnm{Felix} \sur{Buld}}\email{felix.buld@tum.de}

\author*[2]{\fnm{Nicklas} \sur{Klein}}\email{nicklas.klein@unibe.ch}

\author[3]{\fnm{Michael} \sur{Pinedo}}\email{mlp5@stern.nyu.edu}
%\equalcont{These authors contributed equally to this work.}

\affil[1]{\orgdiv{Chair of Operations Research}, \orgname{Technical University of Munich}, \orgaddress{\street{Arcisstr. 21}, \city{80333~Munich}, \country{Germany}}}

\affil*[2]{\orgdiv{Department of Business Administration}, \orgname{University of Bern}, \orgaddress{\street{Engehaldenstrasse~4}, \city{3012~Bern}, \country{Switzerland}}}

\affil[3]{\orgdiv{Leonard N. Stern School of Business}, \orgname{New York University}, \orgaddress{\street{44 W 4th St.}, \city{New~York}, \state{NY}, \postcode{10012}, \country{USA}}}

\abstract{
	Flow shops are widely studied machine environments in which all jobs must visit all machines in the same order.	While conventional flow shops assume that each job traverses the shop only once, many industrial environments require jobs to loop through the shop multiple times before completion. This means that after traversing the shop and completing its processing on the last machine, a job must return to the first machine and traverse the shop again until it has completed all its required loops. Such a setting, referred to as a flow shop with reentry, has numerous applications in industry, e.g., semiconductor manufacturing. The planning problem is to schedule all loops of all jobs while minimizing the total weighted completion time. In this paper, we consider reentrant flow shops with unit processing times. We show that this problem is strongly NP-hard if the number of machines is part of the input. We propose the Least Remaining Loops First (LRL) priority rule and show that it minimizes the total unweighted completion time. Then, we analyze the Weighted Least Remaining Loops First (WLRL) priority rule and show that it has a worst-case performance ratio of $(1+\sqrt{2})/2$ (about 1.2). Additionally, we present a fully polynomial time approximation scheme (FPTAS) and a pseudo-polynomial time algorithm if the number of machines in the flow shop is fixed. 
}

\keywords{Machine Scheduling, Flow Shop Scheduling, Job Reentry, Priority Rules}

\maketitle
%\pagebreak

\section{Introduction}\label{sec:Intro}

%%%%%%%%%%%%%%
% ------- 0th Part ------- %
%%%%%%%%%%%%%%

Flow shops are widely studied machine environments in which all jobs must visit all machines in the same order. 
In a conventional flow shop with~$m$ machines in series and~$n$ jobs, it is assumed that each job traverses the shop only once. 
However, many industrial environments require the jobs to undergo the same set of sequential processing steps multiple times. 
Such an environment is called a flow shop with reentry and can be described as follows. 
Each job must be processed first on machine~$1$, then on machine~$2$, and so on until it is processed on machine~$m$. 
After completing its processing on machine~$m$, the job must return to machine~$1$ and traverse the shop again until it has completed all its required loops. 
The planning problem is to schedule all loops of all jobs while adhering to precedence relations between consecutive loops of the same job. 
This means that a loop of a job must complete its processing on machine~$m$ before the next loop of that job can start its processing on machine~$1$. 

%%%%%%%%%%%%%%
% ------- 1st Part ------- %
%%%%%%%%%%%%%%

In this paper, we consider the problems of minimizing the total completion time and total weighted completion time for reentrant flow shops. 
Minimizing these objective functions is of interest, e.g., if there is limited storage capacity or if jobs must undergo subsequent processing steps.  
It is well known that minimizing the total completion time for conventional flow shops with~$m\geq 2$ machines is NP-hard. 
We show that for reentrant flow shops with~$m\geq 2$ machines, minimizing the total weighted completion time is strongly NP-hard even if the jobs have unit processing times.  
Then, we analyze two priority rules. %consider the total completion time objective, i.e., all jobs have unit weights. 
First, we analyze the ``Least Remaining Loops First'' (LRL) priority rule. 
This priority rule schedules, whenever machine~$1$ becomes available, a loop of a job with the least remaining loops among all available jobs. 
We show that this priority rule minimizes the total unweighted completion time as well as the total weighted completion time if the weights and numbers of loops are ``agreeable'', i.e., jobs with high weights also have small numbers of loops. 
Second, we analyze the ``Weighted Least Remaining Loops First'' (WLRL) priority rule. 
This priority rule schedules a loop of a job with the highest ratio of weight to the number of remaining loops among all available jobs whenever machine~$1$ becomes available.  
%We show that if the weights are agreeable, this priority rule minimizes the total weighted completion time. 
While it is not guaranteed that this priority rule generates optimal schedules, we show that it has a theoretical worst-case performance ratio of~$\frac{1}{2}(1+\sqrt{2})$ $(\approx 1.2)$.
In practice, the WLRL rule seems to perform even better. 
In a numerical study, we observe that the average relative deviation of the total weighted completion time under WLRL from that under the optimal schedule is about~$1\%$. 
Finally, we analyze reentrant flow shops with a fixed number of machines. 
We show that this problem is weakly NP-hard and propose a dynamic programming algorithm that solves the problem in pseudo-polynomial time.
Additionally, we present a fully polynomial-time approximation scheme (FPTAS), which produces arbitrarily good approximate solutions. 

%%%%%%%%%%%%%%
% ------- 2nd Part ------- %
%%%%%%%%%%%%%%

Reentrant flow shops can be found in many manufacturing facilities across various industries. 
For example, the repeated dyeing and drying process in textile manufacturing and the coating and drying process in mirror manufacturing can be modeled as reentrant flow shops \citep{ChoiKim2007}. 
Also, printed circuit board manufacturing lines require job recirculation \citep{Pan2003}. 
One of the most frequently discussed applications of job recirculation is in semiconductor wafer fabs \citep{pfund2006,Monch2011surveySemiconductors}. 
In these manufacturing facilities, wafers undergo a series of sequential processing steps that are repeated multiple times. 
%\cite{Monch2011surveySemiconductors} survey scheduling problems and solution techniques in semiconductor manufacturing. 
\cite{Danping2011} survey research methodologies for reentrant shops with a particular focus on semiconductor wafer manufacturing. 
%TODO: Check again 
\cite{Wu2022} model the so-called furnace (a part of the semiconductor manufacturing process) as a reentrant shop with additional constraints; 
they develop a scheduling heuristic and describe an industry implementation of their algorithm in a real-world wafer fab. 
In addition to manufacturing environments that involve planned repetitive processing, job recirculation may also occur in any production system if a job has to undergo repair or rework after a defect has been detected \citep{Danping2011}.

%%%%%%%%%%%%%%
% ------- 3rd Part ------- %
%%%%%%%%%%%%%%

Flow shops with reentry were first examined by \cite{Graves1983}. 
Since then, they have received considerable attention in the literature. % (see, e.g., the survey of \citealp{Danping2011} or the book chapter by \citealp{Emmons2012_Reentrant}). 
Most publications focus on minimizing the makespan \citep{LevAdiri1984_V_shop, Wang1997, Pan2003, Chen2006, ChoiKim2007, Choi2008, Shufan2023, Zheng2023}. 
These papers discuss various formats of job reentry. For example, V-shops where the machine route is $1 \rightarrow 2 \rightarrow \ldots \rightarrow m-1 \rightarrow m \rightarrow m-1 \rightarrow \ldots \rightarrow 1$ \citep{LevAdiri1984_V_shop} or chain-reentrant shops in which only the first machine is revisited \citep{Wang1997}. 
For an overview of the various different formats of job reentry, see, e.g., \cite{Emmons2012}. 
The reentrant flow shop with full loops is the most frequently examined machine route. 
In such an environment, each job must visit the machines in each loop in the order $1 \rightarrow 2 \rightarrow \ldots \rightarrow m$ \citep{Pan2003,Chen2006,ChoiKim2007,Choi2008}. 
\cite{Yu2020} investigate machine-ordered and proportionate flow shops with full loop reentry. 
In these settings, the processing times of jobs depend only on either the machine (machine-ordered) or the job (proportionate).  
The authors identify a class of schedules that minimize the makespan if all jobs have the same number of loops. 
Additionally, they propose the ``Most Remaining Loops First'' (MRL) priority rule and show that it minimizes the makespan under certain conditions, which include the case with unit processing times. 
The literature concerned with the total completion time objective is more scarce. 
\cite{Kubiak1996} examine the total completion time objective for a reentrant flow shop with a ``hub'' machine. 
In this setting, the jobs must alternate between the hub machine and the $m-1$ other machines, i.e., $1\rightarrow 2 \rightarrow 1 \rightarrow 3 \rightarrow \ldots \rightarrow m \rightarrow 1$. 
\cite{Jing2011} propose a k-insertion heuristic for minimizing the total completion time in a reentrant flow shop with full loops. 
However, to the best of our knowledge, there is no analysis of optimal scheduling policies for the total completion time objective in the literature.

%%%%%%%%%%%%%%
% ------- 4th Part ------- %
%%%%%%%%%%%%%%

The remainder of this paper is organized as follows. 
Section~\ref{sec:Problem} describes the planning problem in detail and provides an example. 
In Section~\ref{sec:Preliminiaries}, we introduce two concepts which we call progressions and non-interruptive schedules. We show that there is at least one optimal schedule that is non-interruptive. 
In Section~\ref{sec:m_input}, we show that minimizing the total weighted completion time is strongly NP-hard if the number of machines is part of the input. We analyze the LRL rule and show that it minimizes the total completion time.
Then, we present the WLRL rule and show its performance guarantee.
In Section~\ref{sec:m_fixed}, we show that the problem remains NP-hard for a fixed number of machines, and present the pseudo-polynomial time algorithm and the FPTAS.
In Section~\ref{sec:Conclusion}, we conclude the paper and provide an outlook on future research.

\section{Problem description and notation}\label{sec:Problem}

Consider a flow shop with~$m$ machines in series and $n$ jobs. 
Job $j$ traverses the shop~$\Lps_j$ times. 
%Thereby, the next loop of a job can start on machine~$1$ either immediately upon completion of the previous loop on machine~$m$ or at some later point in time. 
Let $p_{ijk}$ be the processing time of job~$j$ on machine~$i$ in loop~$k$, $k= 1,\ldots,\Lps_j$ and let $w_j>0$ denote the weight of job~$j$.
Let $\ell_{jk}$ refer to loop~$k$ of job~$j$. 
For a schedule $\Sched$, let $S_{jk}$ denote the start time of job~$j$ in loop~$k$ on machine~$1$ and~$C_{jk}$ the completion time on machine~$m$. If the schedule is not clear from the context, we may instead write $S_{jk}(\Sched)$ and $C_{jk}(\Sched)$, respectively.
Job~$j$ is completed once its last loop is completed, i.e., $C_j = C_{j\Lps_j}$. 
In what follows, we focus our analysis on permutation schedules, which are defined by considering each loop~$k$ of job~$j$ as a sub-job of job~$j$ \citep{Yu2020}. 
In total, $\sum_{j=1}^n \Lps_j$ loops must be sequenced while adhering to precedence constraints between successive loops of the same job. 
These precedence constraints take the form $S_{jk} \geq C_{j,k-1}$ for all jobs~$j=1,\ldots,n$ and all loops~${k=2,\ldots,\Lps_j}$. 
This means the next loop of a job can start on machine~$1$ either immediately upon completion of the job's previous loop on machine~$m$ or at some later point in time. 
Our objective is to determine a schedule that minimizes the total weighted completion time $\sum_{j=1}^n w_jC_j$. 

We assume throughout this paper that~$p_{ijk}=p=1$, motivated, e.g., by some underlying takt time in manufacturing. 
This implies that after completing its processing on machine~$1$, a job immediately continues its processing on machine~$2$, then on machine~$3$, and so on, until it completes its processing on machine~$m$. 
%Therefore, jobs waiting to be processed are never between two machines and any total completion time optimal schedule is a permutation schedule. 
It also implies that an optimal schedule does not allow for unforced idleness, i.e., in an optimal schedule, machine~$1$ is only left idle if no loop of any job is available to begin its processing on machine~$1$. 
%Therefore, permutation schedules are optimal for a more general class of schedules. 
%
%which are defined by considering each loop~$k$ of job~$j$ as a sub-job of job~$j$ and encoding a schedule as a sequence of loops \citep{Yu2020}. 
%
%
%In total, $\sum_{j=1}^n \Lps_j$ loops must be sequenced while adhering to precedence constraints between successive loops of the same job. 
%These precedence constraints take the form $S_{jl} \geq C_{j,l-1}$ for all jobs $j,j=1,\ldots,n$, and all loops $l,l=2,\ldots,\Lps_j$. 
%
We denote this scheduling problem as $F|reentry, \ p_{ijk} = 1|\sum w_jC_j$ using the standard three-field scheduling notation \citep{Graham1979_ThreeField}. %The number of machines~$m$ is part of the input.

We illustrate the scheduling problem through the following example.

\begin{example}\label{ex:IllExample}
	Consider a reentrant flow shop with $m=3$ machines and $n=5$ jobs. The jobs require $\Lps = (2,2,2,3,4)$ loops. The jobs have weights~$w = (2,1,1,3,4)$. 
	The Gantt chart for permutation sequence $\Sched = [\ell_{51};\allowbreak\ell_{41};\allowbreak\ell_{11};\allowbreak\ell_{21};\allowbreak\ell_{31};\allowbreak\ell_{42};\allowbreak\ell_{22};\allowbreak\ell_{32};\allowbreak\ell_{52};\allowbreak\ell_{12};\allowbreak\ell_{43};\allowbreak\ell_{53};\allowbreak\ell_{54}]$ is displayed in Figure~\ref{fig:ExampleGanttChart}. 
	We observe that machine~$1$ is idle for two time units as loop $\ell_{54}$ can only start on machine~$1$ after loop $\ell_{53}$ is completed on machine~$3$. 
	The total weighted completion time of this schedule is $\sum_{j=1}^5 {w_jC_j = 150}$. 
\end{example}

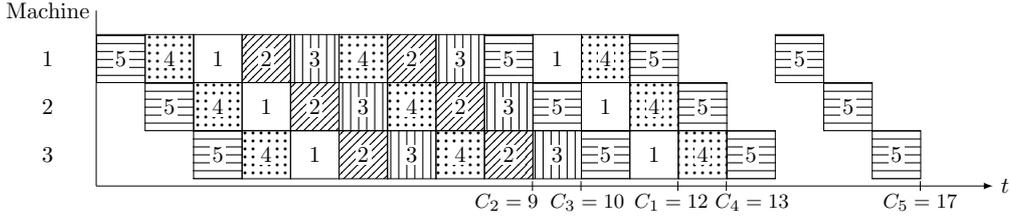
\begin{figure*}
	\centering
	\caption{Gantt chart for the schedule of Example~\ref{ex:IllExample}}\label{fig:ExampleGanttChart}
	\scalebox{0.85}{
	\begin{tikzpicture}[scale=0.75]
			
			\pgfmathsetmacro{\m}{3.65}
			
			\draw[-] (0,0) -- (0, -\m);
			\draw[-latex] (0, -\m) -- node[pos=1,right] {$t$} (18.5, -\m);
			
			\node (Machines) at (-1,0) {Machine};
			\node (Mone) at (-1,-1) {$1$};
			\node (Mtwo) at (-1,-2) {$2$};
			\node (Mthree) at (-1,-3) {$3$};
			
			%M1
			\foreach \x in {2, 9}{
				\node[job1] (1) at (\x,-1) {}; \node at (1.center) [lab] {$1$};
				\node[job1] (1) at (\x+1,-2) {}; \node at (1.center) [lab] {$1$};
				\node[job1] (1) at (\x+2,-3) {}; \node at (1.center) [lab] {$1$};
			}
			\foreach \x in {3,6}{
				\node[job2] (2) at (\x,-1) {}; \node at (2.center) [lab] {$2$};
				\node[job2] (2) at (\x+1,-2) {}; \node at (2.center) [lab] {$2$};
				\node[job2] (2) at (\x+2,-3) {}; \node at (2.center) [lab] {$2$};
			}
			\foreach \x in {4,7}{
				\node[job3] (3) at (\x,-1) {}; \node at (3.center) [lab] {$3$};
				\node[job3] (3) at (\x+1,-2) {}; \node at (3.center) [lab] {$3$};
				\node[job3] (3) at (\x+2,-3) {}; \node at (3.center) [lab] {$3$};
			}
			\foreach \x in {1,5,10}{
				\node[job4] (4) at (\x,-1) {}; \node at (4.center) [lab] {$4$};
				\node[job4] (4) at (\x+1,-2) {}; \node at (4.center) [lab] {$4$};
				\node[job4] (4) at (\x+2,-3) {}; \node at (4.center) [lab] {$4$};
			}		
			\foreach \x in {0,8,11,14}{
				\node[job5] (5) at (\x,-1) {}; \node at (5.center) [lab] {$5$};
				\node[job5] (5) at (\x+1,-2) {}; \node at (5.center) [lab] {$5$};
				\node[job5] (5) at (\x+2,-3) {}; \node at (5.center) [lab] {$5$};
			}

			\foreach \x in {12}{
				\node[anchor=north,xshift=-1mm] (C4) at (\x,-\m) {\small $C_1=\x$};
				\draw (\x,-0.1-\m) -- (\x,0.1-\m);
			}
			\foreach \x in {9}{
				\node[anchor=north,xshift=-4mm] (C4) at (\x,-\m) {\small $C_2=\x$};
				\draw (\x,-0.1-\m) -- (\x,0.1-\m);
			}
			\foreach \x in {10}{
				\node[anchor=north,xshift=1mm] (C4) at (\x,-\m) {\small $C_3=\x$};
				\draw (\x,-0.1-\m) -- (\x,0.1-\m);
			}
			\foreach \x in {13}{
				\node[anchor=north,xshift=4mm] (C4) at (\x,-\m) {\small $C_4=\x$};
				\draw (\x,-0.1-\m) -- (\x,0.1-\m);
			}
			\foreach \x in {17}{
				\node[anchor=north] (C4) at (\x,-\m) {\small $C_5=\x$};
				\draw (\x,-0.1-\m) -- (\x,0.1-\m);
			}
			
	\end{tikzpicture}}
\end{figure*}

\section{Preliminaries and non-interruptive schedules}\label{sec:Preliminiaries}

In this section, we introduce some preliminary definitions and lemmas. 
First, we introduce two concepts we refer to as progressions and progression interchanges. 
Utilizing these concepts, we introduce so-called non-interruptive schedules and show that there is a non-interruptive schedule that minimizes the total weighted completion time.

% ----------- Progressions ----------- %
A progression can be described as an ordered set of loops of one or more jobs that are processed consecutively. 
This means that the next loop in the progression starts on machine~$1$ as soon as the previous loop completes on machine~$m$. 
For a given schedule, a progression is initiated and uniquely defined by its first loop. 
A progression ends as soon as there is an idle time on machine~$1$ at the completion of the last loop in the progression. % if a job completes a loop and there is no job is waiting to be processed on machine~$1$. 

%Note that there could be jobs in progress on a machine $i\geq 2$ that require further loops. 
%In an optimal schedule, no loop can be available to start on machine 1 at the end of a progression, as the schedule does not allow for unforced idleness.

%\vspace*{1cm}

\begin{definition}
	\label{def: progression}
	A progression initiated by loop $\ell_{jk}$ is defined as the following ordered set of loops: 
	\begin{align*}
		\Pg\left(\ell_{jk}\right) \coloneqq \left\{\ell_{jk}\right\} & \cup \left\{\ell_{j'k'}: S_{j'k'} = C_{jk}\right\} \\
        & \cup \left\{\ell_{j''k''}: S_{j''k''} = C_{j'k'}\right\} \cup \ldots
        %\Pg\left(\ell_{jk}\right) \coloneqq \left\{\ell_{jk}\right\} \cup \left\{ \ell_{j'k'}:  S_{j'k'} = S_{jk} + rm, r \in \mathbb{N}\right\} 
	\end{align*}
%	(alternative notation:)
%	\[
%		\Pg(l) = \{l\} \cup \{l':  S_{l'} = C_{l}\} \cup \{ l'': S_{l''} = C_{l'}\}	\cup \ldots
%	\]
	
	Note that we could also characterize a progression via loops that have a difference in start times of $m$: ${\Pg\left(\ell_{jk}\right) \coloneqq \left\{ \ell_{j'k'}:  S_{j'k'} = S_{jk} + rm, r \in \mathbb{N}_0\right\}}$. 
    If the loop that initiates the progression is not relevant or clear from the context, we simply denote the progression as $\Pg$. 
	Consider the set of jobs that have their last loop in progression~$\Pg$, denoted as $\{j: \ell_{j\Lps_j}\in \Pg \}$. We denote the total weight of these jobs as $\Wfl(\Pg) \coloneqq \sum_{j: \ell_{j\Lps_j}\in \Pg} w_j$.  
%	We define the set of jobs that have their last loop in progression $\Pg$ as ${\Fl(\Pg) = \{j: \ell_{j\Lps_j}\in \Pg \}}$. 
%	We further define the total weight of these jobs as $\Wfl(\Pg) = \sum_{j\in \Fl(\Pg)} w_j$. 
\end{definition}

	Note that the loops within one progression may belong to different jobs. 
	Also, loops of the same job may be in different progressions. 
	The following example demonstrates what the pattern of a progression may look like. 

\begin{example}\label{ex:ExampleProgression_1}
	Consider the same schedule as in Example~\ref{ex:IllExample}. The progression initiated by loop~$\ell_{41}$ is $\Pg(\ell_{41})=\{\ell_{41},\ell_{31},\ell_{32},\ell_{43}\}$ with ${\Wfl\big(\Pg(\ell_{41})\big)=4}$. We display the progression in Figure~\ref{fig:ExampleProgression_1}. 
\end{example}

\begin{figure*}
	\centering
	\caption{Gantt chart for progression $\Pg(\ell_{41})$}\label{fig:ExampleProgression_1}
	\scalebox{0.85}{
		\begin{tikzpicture}[scale=0.75]

			\pgfmathsetmacro{\m}{3.65}
			
			\draw[-] (0,0) -- (0, -\m);
			\draw[-latex] (0, -\m) -- node[pos=1,right] {$t$} (17.5, -\m);
			
%			\node (Machines) at (-1,0) {Machine};
%			\node (Mone) at (-1,-1) {$1$};
%			\node (Mtwo) at (-1,-2) {$2$};
%			\node (Mthree) at (-1,-3) {$3$};

			\node (Machines) at (-1,0) {\small Machine};
			\foreach \x in {1,2,3}{
				\node (Mone) at (-1,-\x) {$\x$};
			}
		
			%Greyed out jobs
			\foreach \x in {2, 9}{
				\node[job_gr] (1) at (\x,-1) {}; \node at (1.center) [lab2] {$1$};
				\node[job_gr] (1) at (\x+1,-2) {}; \node at (1.center) [lab2] {$1$};
				\node[job_gr] (1) at (\x+2,-3) {}; \node at (1.center) [lab2] {$1$};
			}
			\foreach \x in {3,6}{
				\node[job_gr] (2) at (\x,-1) {}; \node at (2.center) [lab2] {$2$};
				\node[job_gr] (2) at (\x+1,-2) {}; \node at (2.center) [lab2] {$2$};
				\node[job_gr] (2) at (\x+2,-3) {}; \node at (2.center) [lab2] {$2$};
			}
%			\foreach \x in {4,7}{
%				\node[job3] (3) at (\x,-1) {}; \node at (3.center) [lab] {$3$};
%				\node[job3] (3) at (\x+1,-2) {}; \node at (3.center) [lab] {$3$};
%				\node[job3] (3) at (\x+2,-3) {}; \node at (3.center) [lab] {$3$};
%			}
			\foreach \x in {5}{
				\node[job_gr] (4) at (\x,-1) {}; \node at (4.center) [lab2] {$4$};
				\node[job_gr] (4) at (\x+1,-2) {}; \node at (4.center) [lab2] {$4$};
				\node[job_gr] (4) at (\x+2,-3) {}; \node at (4.center) [lab2] {$4$};
			}		
			\foreach \x in {0,8,11,14}{
				\node[job_gr] (5) at (\x,-1) {}; \node at (5.center) [lab2] {$5$};
				\node[job_gr] (5) at (\x+1,-2) {}; \node at (5.center) [lab2] {$5$};
				\node[job_gr] (5) at (\x+2,-3) {}; \node at (5.center) [lab2] {$5$};
			}		
			
			\foreach \x in {4,7}{
				\node[job3] (3) at (\x,-1) {}; \node at (3.center) [lab] {$3$};
				\node[job3] (3) at (\x+1,-2) {}; \node at (3.center) [lab] {$3$};
				\node[job3] (3) at (\x+2,-3) {}; \node at (3.center) [lab] {$3$};
			}
			\foreach \x in {1,10}{
				\node[job4] (4) at (\x,-1) {}; \node at (4.center) [lab] {$4$};
				\node[job4] (4) at (\x+1,-2) {}; \node at (4.center) [lab] {$4$};
				\node[job4] (4) at (\x+2,-3) {}; \node at (4.center) [lab] {$4$};
			}		
	\end{tikzpicture}}
\end{figure*}
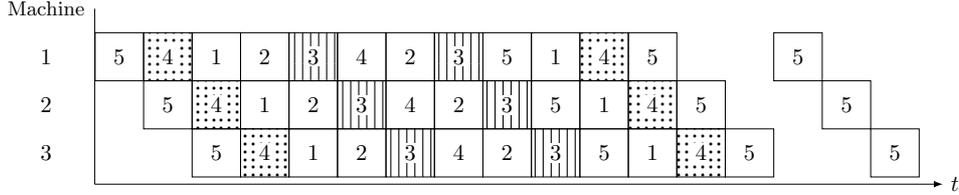

% ----------- Progression interchanges ----------- %

We now introduce progression interchanges. 
Informally, a progression interchange generates a new schedule in which all loops of one progression are interchanged with all loops of another progression. All loops that are not part of the interchange remain untouched. 

\begin{definition}
	Consider two progressions $\Pg(\ell_{j_1k_1})$ and $\Pg(\ell_{j_2k_2})$ in the schedule $\Sched$ with ${S_{j_1k_1}<S_{j_2k_2}}$ and ${\ell_{j_2k_2}\notin \Pg(\ell_{j_1k_1})}$. 
	Denote the difference in start times as $\delta=S_{j_2k_2}-S_{j_1k_1}$.
	The new schedule $\Sched'$ obtained through the progression interchange is defined by the following start times of loops:
	\begin{align*}
	S_{jk} \big ( \Sched' \big ) = \begin{cases}
		S_{jk} + \delta, &\text{ if } \ell_{jk}\in \Pg(\ell_{j_1k_1})\\
		S_{jk} - \delta, &\text{ if } \ell_{jk}\in \Pg(\ell_{j_2k_2})\\
		S_{jk}, &\text{ otherwise.}
	\end{cases}
	\end{align*}
	
	The total weighted completion time of schedule~$\Sched'$ is: 
	\begin{align*}
	\sum_{j=1}^n w_jC_j\big(\Sched' \big ) = & \sum_{j=1}^n w_jC_j\big(\Sched \big )\\
    & + \delta \big [  \Wfl\big(\Pg(\ell_{j_1k_1})\big) - \Wfl\big(\Pg(\ell_{j_2k_2})\big) \big ].
	\end{align*}
\end{definition}

We demonstrate what a progression interchange may look like in the following example. 

\begin{example}\label{ex:ProgressionInterchange}
	Consider the same schedule as in Example~\ref{ex:IllExample}. The new schedule obtained by interchanging progressions $\Pg(\ell_{31})$ and $\Pg(\ell_{52})$ is displayed in Figure~\ref{fig:Progression_Interchange}. Note that the new schedule contains unforced idleness between loops $\ell_{32}$ and $\ell_{43}$, since loop $\ell_{43}$ could actually start earlier on machine~$1$. 
	However, as $w_3+w_4=w_5=4$, the total weighted completion time is unchanged. 
\end{example}

\begin{figure*}
	\centering
	\caption{Illustration of a progression interchange}\label{fig:Progression_Interchange}
	\subfloat[Schedule before interchange]{
		\scalebox{0.85}{
			\begin{tikzpicture}[scale=0.75]
				
				\pgfmathsetmacro{\m}{3.65}
				
				\draw[-] (0,0) -- (0, -\m);
				\draw[-latex] (0, -\m) -- node[pos=1,right] {$t$} (17.5, -\m);
				%				\draw[-latex] (0, -\m) --  (18.5, -\m);
				
				\node (Machines) at (-1,0) {\small Machine};
				\foreach \x in {1,2,3}{
					\node (Mone) at (-1,-\x) {$\x$};
				}
				
				%Greyed out jobs
				\foreach \x in {2, 9}{
					\node[job_gr] (1) at (\x,-1) {}; \node at (1.center) [lab2] {$1$};
					\node[job_gr] (1) at (\x+1,-2) {}; \node at (1.center) [lab2] {$1$};
					\node[job_gr] (1) at (\x+2,-3) {}; \node at (1.center) [lab2] {$1$};
				}
				\foreach \x in {3,6}{
					\node[job_gr] (2) at (\x,-1) {}; \node at (2.center) [lab2] {$2$};
					\node[job_gr] (2) at (\x+1,-2) {}; \node at (2.center) [lab2] {$2$};
					\node[job_gr] (2) at (\x+2,-3) {}; \node at (2.center) [lab2] {$2$};
				}
				%			\foreach \x in {4,7}{
					%				\node[job3] (3) at (\x,-1) {}; \node at (3.center) [lab] {$3$};
					%				\node[job3] (3) at (\x+1,-2) {}; \node at (3.center) [lab] {$3$};
					%				\node[job3] (3) at (\x+2,-3) {}; \node at (3.center) [lab] {$3$};
					%			}
				\foreach \x in {1,5}{
					\node[job_gr] (4) at (\x,-1) {}; \node at (4.center) [lab2] {$4$};
					\node[job_gr] (4) at (\x+1,-2) {}; \node at (4.center) [lab2] {$4$};
					\node[job_gr] (4) at (\x+2,-3) {}; \node at (4.center) [lab2] {$4$};
				}		
				\foreach \x in {0}{
					\node[job_gr] (5) at (\x,-1) {}; \node at (5.center) [lab2] {$5$};
					\node[job_gr] (5) at (\x+1,-2) {}; \node at (5.center) [lab2] {$5$};
					\node[job_gr] (5) at (\x+2,-3) {}; \node at (5.center) [lab2] {$5$};
				}		
				
				\foreach \x in {8,11,14}{
					\node[job5] (5) at (\x,-1) {}; \node at (5.center) [lab] {$5$};
					\node[job5] (5) at (\x+1,-2) {}; \node at (5.center) [lab] {$5$};
					\node[job5] (5) at (\x+2,-3) {}; \node at (5.center) [lab] {$5$};
				}		
				
				\foreach \x in {4,7}{
					\node[job3] (3) at (\x,-1) {}; \node at (3.center) [lab] {$3$};
					\node[job3] (3) at (\x+1,-2) {}; \node at (3.center) [lab] {$3$};
					\node[job3] (3) at (\x+2,-3) {}; \node at (3.center) [lab] {$3$};
				}
				\foreach \x in {10}{
					\node[job4] (4) at (\x,-1) {}; \node at (4.center) [lab] {$4$};
					\node[job4] (4) at (\x+1,-2) {}; \node at (4.center) [lab] {$4$};
					\node[job4] (4) at (\x+2,-3) {}; \node at (4.center) [lab] {$4$};
				}

	\end{tikzpicture}}}
	
	\subfloat[Schedule after interchange]{
		\scalebox{0.85}{
			\begin{tikzpicture}[scale=0.75]
				
				\pgfmathsetmacro{\m}{3.65}
				
				\draw[-] (0,0) -- (0, -\m);
				\draw[-latex] (0, -\m) -- node[pos=1,right] {$t$} (17.5, -\m);
				%				\draw[-latex] (0, -\m) -- (18.5, -\m);
				
				\node (Machines) at (-1,0) {\small Machine};
				\foreach \x in {1,2,3}{
					\node (Mone) at (-1,-\x) {$\x$};
				}
				
				%Greyed out jobs
				\foreach \x in {2, 9}{
					\node[job_gr] (1) at (\x,-1) {}; \node at (1.center) [lab2] {$1$};
					\node[job_gr] (1) at (\x+1,-2) {}; \node at (1.center) [lab2] {$1$};
					\node[job_gr] (1) at (\x+2,-3) {}; \node at (1.center) [lab2] {$1$};
				}
				\foreach \x in {3,6}{
					\node[job_gr] (2) at (\x,-1) {}; \node at (2.center) [lab2] {$2$};
					\node[job_gr] (2) at (\x+1,-2) {}; \node at (2.center) [lab2] {$2$};
					\node[job_gr] (2) at (\x+2,-3) {}; \node at (2.center) [lab2] {$2$};
				}
				%			\foreach \x in {4,7}{
					%				\node[job3] (3) at (\x,-1) {}; \node at (3.center) [lab] {$3$};
					%				\node[job3] (3) at (\x+1,-2) {}; \node at (3.center) [lab] {$3$};
					%				\node[job3] (3) at (\x+2,-3) {}; \node at (3.center) [lab] {$3$};
					%			}
				\foreach \x in {1,5}{
					\node[job_gr] (4) at (\x,-1) {}; \node at (4.center) [lab2] {$4$};
					\node[job_gr] (4) at (\x+1,-2) {}; \node at (4.center) [lab2] {$4$};
					\node[job_gr] (4) at (\x+2,-3) {}; \node at (4.center) [lab2] {$4$};
				}		
				\foreach \x in {0}{
					\node[job_gr] (5) at (\x,-1) {}; \node at (5.center) [lab2] {$5$};
					\node[job_gr] (5) at (\x+1,-2) {}; \node at (5.center) [lab2] {$5$};
					\node[job_gr] (5) at (\x+2,-3) {}; \node at (5.center) [lab2] {$5$};
				}		
				% Jobs in interchange
				\foreach \x in {8-4,11-4,14-4}{
					\node[job5] (5) at (\x,-1) {}; \node at (5.center) [lab] {$5$};
					\node[job5] (5) at (\x+1,-2) {}; \node at (5.center) [lab] {$5$};
					\node[job5] (5) at (\x+2,-3) {}; \node at (5.center) [lab] {$5$};
				}		
				
				\foreach \x in {4+4,7+4}{
					\node[job3] (3) at (\x,-1) {}; \node at (3.center) [lab] {$3$};
					\node[job3] (3) at (\x+1,-2) {}; \node at (3.center) [lab] {$3$};
					\node[job3] (3) at (\x+2,-3) {}; \node at (3.center) [lab] {$3$};
				}
				\foreach \x in {10+4}{
					\node[job4] (4) at (\x,-1) {}; \node at (4.center) [lab] {$4$};
					\node[job4] (4) at (\x+1,-2) {}; \node at (4.center) [lab] {$4$};
					\node[job4] (4) at (\x+2,-3) {}; \node at (4.center) [lab] {$4$};
				}		
				
	\end{tikzpicture}}}
\end{figure*}
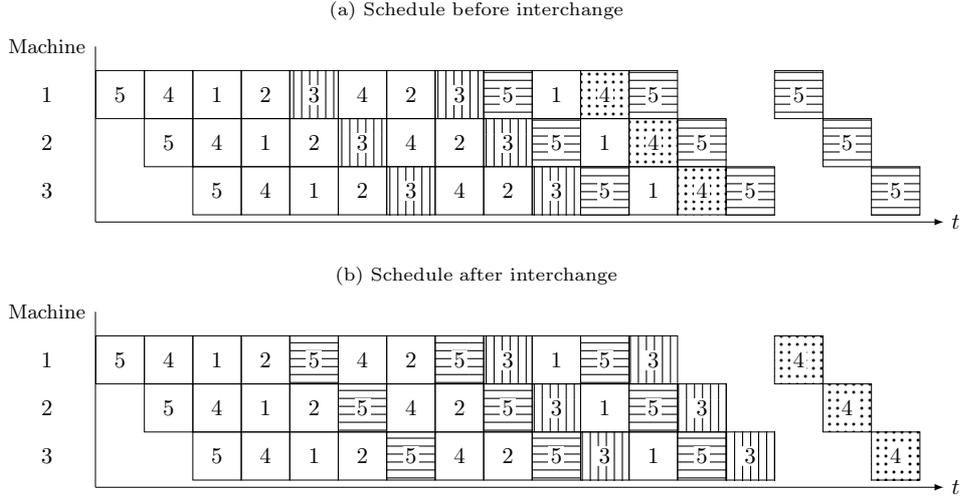

% ----------- non-interruptive schedules ----------- %

We now introduce so-called non-interruptive schedules. 
We say that a job is interrupted if there is a waiting time between the completion of a loop of that job on machine~$m$ and the start of the job's next loop on machine~$1$. 
In a non-interruptive schedule, no job has such an interruption. 

\begin{definition}
	A schedule is called non-interruptive if the next loop of any job starts on machine~$1$ as soon as its previous loop completes on machine~$m$, i.e.,
	\begin{align*}
	    S_{jk} = C_{j,k-1}, && \forall \ j=1,\ldots,n, \ k=2,\ldots,\Lps_j.
	\end{align*}

\end{definition}

Note that in a non-interruptive schedule, all loops of a job are in the same progression. 
This means that a progression starting with a loop of a particular job will finish all loops of that job before it can schedule the first loop of a different job. 
This stands in contrast to arbitrary (interruptive) schedules, where a waiting time between two consecutive loops of the same job could occur. 

The following theorem states that among the schedules with minimal total weighted completion time, there is at least one that is non-interruptive. 
This structural result implies that we can focus our analysis of total weighted completion time optimal schedules on non-interruptive schedules. 
Note that this distinguishes the total weighted completion time objective from other regular objective functions, such as the makespan, for which instances can be created where optimal schedules must contain interruptions~\citep{Yu2020}.

\begin{theorem}\label{thm:NonInterruption}
	There is a non-interruptive schedule that minimizes the total weighted completion time. 
\end{theorem}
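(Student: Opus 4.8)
The plan is to prove the statement by an extremal (exchange) argument built on the progression interchanges introduced above. Since there are only finitely many permutation sequences, an optimal schedule exists. Among all optimal schedules I would choose one, $\Sched^{*}$, that minimizes the total interruption
$\Phi(\Sched)\coloneqq \sum_{j}\sum_{k=2}^{\Lps_j}\big(S_{jk}-C_{j,k-1}\big)\ge 0$,
and I would first argue that this minimizer may be taken to have no unforced idleness: left-shifting a loop into an earlier idle slot (when available) never increases a regular objective and never increases $\Phi$, so the combined extremal choice is consistent. Now $\Sched^{*}$ is non-interruptive exactly when $\Phi(\Sched^{*})=0$, so it suffices to rule out $\Phi(\Sched^{*})>0$.

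Second, assuming $\Phi(\Sched^{*})>0$, I would fix a witnessing interruption: a job $j$ and loop $k\ge 2$ with $S_{jk}>C_{j,k-1}$. Because loop $\ell_{jk}$ is available at time $C_{j,k-1}$ and $\Sched^{*}$ has no unforced idleness, machine~1 cannot idle there, so the slot at $C_{j,k-1}$ is filled by the loop that continues the progression $\Pg$ containing $\ell_{j,k-1}$; this loop belongs to a job other than $j$ (else it would be $\ell_{jk}$). Hence $\ell_{j,k-1}$ is the last loop of job~$j$ inside $\Pg$, while $\ell_{jk}$ begins a strictly later progression. Writing $\mathcal{Q}_1$ for the tail of $\Pg$ starting at time $C_{j,k-1}$ and $\mathcal{Q}_2$ for the tail beginning at $\ell_{jk}$ (start time $S_{jk}=C_{j,k-1}+\delta$ with $\delta>0$), the repair move is the progression interchange of $\mathcal{Q}_1$ and $\mathcal{Q}_2$, which shifts $\mathcal{Q}_2$ back by $\delta$ so that $\ell_{jk}$ starts precisely at $C_{j,k-1}$ and this interruption is healed.

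Third, I would control the objective through the interchange formula: the change equals $\delta\big[\Wfl(\mathcal{Q}_1)-\Wfl(\mathcal{Q}_2)\big]$. Optimality of $\Sched^{*}$ forces this change to be nonnegative for any feasible interchange, i.e.\ $\Wfl(\mathcal{Q}_1)\ge \Wfl(\mathcal{Q}_2)$; to finish I need it to be exactly zero, so that the repaired schedule is again optimal yet has strictly smaller $\Phi$, contradicting the minimality of $\Phi(\Sched^{*})$. I would secure cost-neutrality by choosing the witnessing interruption carefully — for instance by refining the extremal principle to a lexicographic one (minimize $\Phi$, then the weighted last-loop start times), which prevents the strict inequality $\Wfl(\mathcal{Q}_1)>\Wfl(\mathcal{Q}_2)$ from surviving around an interruption, forcing $\Wfl(\mathcal{Q}_1)=\Wfl(\mathcal{Q}_2)$ at the move I actually perform.

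The step I expect to be the main obstacle is making this repair \emph{feasible} and \emph{cost-neutral simultaneously}. Feasibility is delicate: shifting whole tails can cause two loops to occupy machine~1 at the same instant unless the slots vacated by $\mathcal{Q}_2$ and the slots it moves into line up, which in turn constrains the relative lengths of $\mathcal{Q}_1$ and $\mathcal{Q}_2$ against the surrounding idle pattern. I would lean on the maximality of progressions — each ends at an idle slot — to show that the residue class $C_{j,k-1}\ (\mathrm{mod}\ m)$ is free immediately beyond $\mathcal{Q}_1$, and, if the tails have unequal length, restrict the interchange to their common-length prefixes while verifying that no new interruption of compensating size appears at the truncation boundary. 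Reconciling this length/collision bookkeeping with the weight comparison $\Wfl(\mathcal{Q}_1)\ge\Wfl(\mathcal{Q}_2)$ dictated by optimality — so that $\Phi$ strictly decreases at no cost — is the crux where the argument must be most careful.
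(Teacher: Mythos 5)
Your opening moves (an extremal choice among optimal schedules, then a progression interchange to heal a witnessed interruption) match the paper's strategy, but there is a genuine gap exactly where you place the ``crux,'' and the fix you sketch does not work. The swap of $\mathcal{Q}_1$ and $\mathcal{Q}_2$ changes the objective by $\delta\big[\Wfl(\mathcal{Q}_1)-\Wfl(\mathcal{Q}_2)\big]$, so it removes the interruption at no cost only when $\Wfl(\mathcal{Q}_1)\le\Wfl(\mathcal{Q}_2)$. The strict case $\Wfl(\mathcal{Q}_1)>\Wfl(\mathcal{Q}_2)$ --- a heavier progression cutting in ahead of the delayed loop $\ell_{jk}$ --- is entirely consistent with the non-profitability of that swap, and no lexicographic refinement of the extremal principle can rule it out: tie-breaking only selects among schedules of \emph{equal} cost, whereas here one must show that the optimal cost itself is attainable without the interruption. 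You cannot force $\Wfl(\mathcal{Q}_1)=\Wfl(\mathcal{Q}_2)$.

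The paper spends two of its three cases on precisely this situation and resolves it with a \emph{different} move, not the $\mathcal{Q}_1$/$\mathcal{Q}_2$ swap: it prepends the loop $\bar\ell=\ell_{j,k-1}$ (whose completion at time $t$ creates the interruption) to the later progression containing $\ell_{jk}$, and advances the heavy progression $\Pg'$ into the slot $t-m$ that $\bar\ell$ vacates; this changes the objective by $m\big[\Wfl(\Pg(\ell_{jk}))-\Wfl(\Pg')\big]<0$ and contradicts optimality outright. When that move is infeasible because the predecessor loop of the cutting-in job completes after $t-m$, a further iterative interchange with the progressions started in the interval $(t-m,t)$ is required. None of this machinery is present in your plan. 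Two smaller points: the paper works with the \emph{last} interruption so that the suffix of the schedule is interruption-free (which is what makes the tails you call $\mathcal{Q}_1,\mathcal{Q}_2$ clean progressions containing entire remaining jobs), and your case $\Wfl(\mathcal{Q}_1)\le\Wfl(\mathcal{Q}_2)$ is essentially the paper's case (i) and is fine.
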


\begin{proof} 
	The proof is by contradiction. Suppose that no non-interruptive schedule has minimal total weighted completion time. Then, an optimal schedule must have at least one interruption. Consider an optimal schedule with the least interruptions among the optimal ones.
	We consider the last interruption, i.e., the last time $t$ at which a loop $\bar{\ell} \coloneqq \ell_{jk}$ with $k<\Lps_j$ completes on machine $m$, but loop $\ell' \coloneqq \ell_{j'k'}$ with $j' \neq j$ starts on machine~$1$. 
	As $t$ is the last interruption, there are no interruptions from time $t+1$ onward. 
	Let $\ell \coloneqq \ell_{j,k+1}$ denote the next loop of job~$j$ that is scheduled on machine~$1$ at time $t+\delta > t$ and $\bar{\ell}' \coloneqq \ell_{j',k'-1}$ the previous loop of job $j'$ that is scheduled on machine~$1$ before time $t-m$. 
	Let $\Pg' \coloneqq \Pg(\ell') $ and $\Pg \coloneqq \Pg(\ell)$ denote the two progressions initiated by loops $\ell'$ and $\ell$, respectively.
	We consider three cases:	

    \pmb{(i) $\Wfl(\Pg') \leq \Wfl(\Pg)$}. We create a schedule without interruption at time $t$ by interchanging the disjoint progressions $\Pg'$ and $\Pg$, see Figure~\ref{fig:Proof_NonInterruption_Interchange_1}. The change in the objective equals ${\delta \cdot \big [\Wfl(\Pg') - \Wfl(\Pg)\big ]}$. As $\Wfl(\Pg') \leq \Wfl(\Pg)$, the total weighted completion time of this new schedule is either less than or equal to the original schedule. The new schedule also has one interruption less than the original schedule. This contradicts our assumptions.

        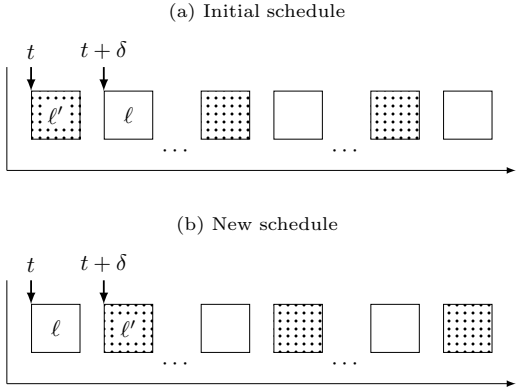
\begin{figure}
        	\centering
        	\caption{Demonstration of the progression interchange in case (i) in the proof of Theorem~\ref{thm:NonInterruption}}\label{fig:Proof_NonInterruption_Interchange_1}
        		\subfloat[Initial schedule\label{fig:Proof_NonInterruption_1A}]{
        		\scalebox{0.85}{
        		\begin{tikzpicture}[scale=0.75]
        			
        			\pgfmathsetmacro{\m}{2.15}
        			
        			\draw[-] (0.5,0) -- (0.5, -\m);
        			\draw[-latex] (0.5, -\m) -- node[pos=1,below] {} (11, -\m);
        			
        			\draw[-latex,thick] (1,0)node[above]{$t$} --(1,-0.5);
        			\draw[-latex,thick] (2.5,0)node[above]{$t+\delta$} --(2.5,-0.5);
        			
        			\foreach \x in {1}{ 
        				\node[job4] (1) at (\x,-1) {};\node at (1.center) [lab] {$\ell'$};
        			}
        			\foreach \x in {2.5}{ 
        				\node[job1] (4) at (\x,-1) {}; 
                        \node at (4.center) [lab] {$\ell$};
        			}		
        			
        			\foreach \x in {4.5,8}{ 
        				\node[job4] (41) at (\x,-1) {};
        			}
        			\foreach \x in {6,9.5}{
        				\node[job1] (41) at (\x,-1) {};
        			}
                    \foreach \x in {2.5,6}{
                        \node[job_tr] (4) at (\x+1,-1.75) {$\cdots$};
                    }
        				
        	\end{tikzpicture}}}
        	
        	\subfloat[New schedule\label{fig:Proof_NonInterruption_1B}]{
        		\scalebox{0.85}{
        		\begin{tikzpicture}[scale=0.75]
        			\pgfmathsetmacro{\m}{2.15}
        			
        			\draw[-] (0.5,0) -- (0.5, -\m);
        			\draw[-latex] (0.5, -\m) -- node[pos=1,below] {} (11, -\m);
        			
        			\draw[-latex,thick] (1,0)node[above]{$t$} --(1,-0.5);
                    \draw[-latex,thick] (2.5,0)node[above]{$t+\delta$} --(2.5,-0.5);
        			
        			\foreach \x in {2.5}{ 
        				\node[job4] (1) at (\x,-1) {};\node at (1.center) [lab] {$\ell'$};
        			}
        			\foreach \x in {1}{ 
        				\node[job1] (4) at (\x,-1) {}; \node at (4.center) [lab] {$\ell$};
        			}
        			\foreach \x in {6,9.5}{ 
        				\node[job4] (41) at (\x,-1) {};
        			}
        			\foreach \x in {4.5,8}{
        				\node[job1] (41) at (\x,-1) {};
        			}
                    \foreach \x in {2.5,6}{
                        \node[job_tr] (4) at (\x+1,-1.75) {$\cdots$};
                    }
        			
        		\end{tikzpicture}}}
        \end{figure}

        \pmb{(ii) $\Wfl(\Pg') > \Wfl(\Pg)$ and $C_{\bar{\ell'}}\leq t-m$}. We show that this is a contradiction to the original schedule being optimal. 
    	To see this, consider a new schedule that adds loop $\bar{\ell}$ as the first element to $\Pg$ and starts $\Pg'$ at time $t-m$, which corresponds to the start time of loop $\bar{\ell}$ in the old schedule, see Figure~\ref{fig:Proof_NonInterruption_Interchange_2}. 
    	As $C_{\bar{\ell'}}\leq t-m$, this new schedule is feasible. The change in the objective equals $ m \cdot \big [\Wfl(\Pg) - \Wfl(\Pg') \big ]$. Thus, the new schedule has a lower total weighted completion time. This contradicts our assumptions. %Also works if the progressions intersect, one could do even better then.

        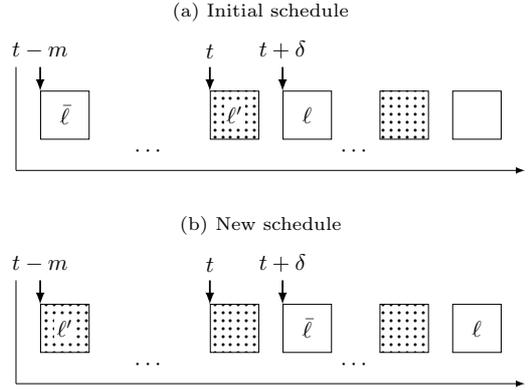
\begin{figure}
        	\centering
        	\caption{Demonstration of the progression interchange in case (ii) in the proof of Theorem~\ref{thm:NonInterruption}}\label{fig:Proof_NonInterruption_Interchange_2}
        	\subfloat[Initial schedule\label{fig:Proof_NonInterruption_2A}]{
        		\scalebox{0.85}{
        		\begin{tikzpicture}[scale=0.75]
        
        			\pgfmathsetmacro{\m}{2.15}
        			
        			\draw[-] (0.5,0) -- (0.5, -\m);
        			\draw[-latex] (0.5, -\m) -- node[pos=1,below] {} (11, -\m);
        			
        			\draw[-latex,thick] (4.5,0)node[above]{$t$} --(4.5,-0.5);
                    \draw[-latex,thick] (6,0)node[above]{$t+\delta$} --(6,-0.5);
        			\draw[-latex,thick] (1,0)node[above]{$t-m$} --(1,-0.5);
        			
        			\foreach \x in {1}{ 
        				\node[job1] (41) at (\x,-1) {$\bar{\ell}$};
        			}
        			\foreach \x in {6}{
        				\node[job1] (41) at (\x,-1) {${\ell}$};
        			}
        			\foreach \x in {9.5}{
        				\node[job1] (41) at (\x,-1) {};
        			}
        			
        			\foreach \x in {4.5}{
        				\node[job4] (4) at (\x,-1) {};\node at (4.center) [lab] {$\ell'$};
        			}
        			\foreach \x in {8}{ 
        				\node[job4] (41) at (\x,-1) {};%\node at (4.center) [lab] {};
        			}
                    \foreach \x in {1.75,6}{
                        \node[job_tr] (4) at (\x+1,-1.75) {$\cdots$};
                    }        			
        				
        	\end{tikzpicture}}}
        	
        	\subfloat[New schedule\label{fig:Proof_NonInterruption_2B}]{
        		\scalebox{0.85}{
        		\begin{tikzpicture}[scale=0.75]
        			
        			\pgfmathsetmacro{\m}{2.15}
        			
        			\draw[-] (0.5,0) -- (0.5, -\m);
        			\draw[-latex] (0.5, -\m) -- node[pos=1,below] {} (11, -\m);
        			
        			\draw[-latex,thick] (4.5,0)node[above]{$t$} --(4.5,-0.5);
                    \draw[-latex,thick] (6,0)node[above]{$t+\delta$} --(6,-0.5);
        			\draw[-latex,thick] (1,0)node[above]{$t-m$} --(1,-0.5);
        			
        			\foreach \x in {6}{ 
        				\node[job1] (41) at (\x,-1) {$\bar{\ell}$};
        			}
        			\foreach \x in {9.5}{
        				\node[job1] (41) at (\x,-1) {${\ell}$};
        			}
        			
        			\foreach \x in {1}{
        				\node[job4] (4) at (\x,-1) {}; \node at (4.center) [lab] {$\ell'$};
        			}
        			\foreach \x in {4.5,8}{ 
        				\node[job4] (41) at (\x,-1) {};%\node at (4.center) [lab] {};
        			}
                    \foreach \x in {1.75,6}{
                        \node[job_tr] (4) at (\x+1,-1.75) {$\cdots$};
                    }
        			
        			\end{tikzpicture}}}
        \end{figure}

        \pmb{(iii)  $\Wfl(\Pg') > \Wfl(\Pg)$ and $C_{\bar{\ell'}}> t-m$}. Similar to case (ii), we show that this is a contradiction to the original schedule being optimal. 
    	As the previous loop of job $j'$, $\bar{\ell}'$, is not yet completed at time $t-m$, the new schedule we obtained in case (ii) is infeasible. Consider the $m-1$ loops scheduled on machine~$1$ from $t-m+1$ to $t-1$. 
    	Denote these loops as $\tilde{\ell}_{m-1},\ldots,\tilde{\ell}_1$ and their corresponding progressions as $\tilde{\Pg}_{m-1},\ldots,\tilde{\Pg}_1$. 
    	Assume $\bar{\ell}'$ completes on machine $m$ at time $t-q$. 
    	This implies that loop $\ell'$ can start processing on machine~$1$ at time~$t-q$ or later. 
    	It follows that $\Wfl(\tilde{\Pg}_q)\geq\Wfl(\Pg')$ must hold; otherwise, an interchange of~$\tilde{\Pg}_q$ and~$\Pg'$ in the original schedule would have resulted in a feasible schedule with a lower total weighted completion time, which contradicts our assumptions. 

        Consider now a new schedule that adds loop $\bar{\ell}$ as the first element to $\Pg$, starts $\Pg'$ at time~${t-q}$ and $\tilde{\Pg}_q$ at time $t-m$, see Figure~\ref{fig:Proof_NonInterruption_Interchange_3}.  
    	As $\Wfl(\tilde{\Pg}_q)\geq\Wfl(\Pg')>\Wfl(\Pg)$ holds, this schedule has a lower total weighted completion time than the original schedule. 
    	If this schedule is feasible, this contradicts our assumptions. 
    	If this schedule is infeasible, one can apply a similar interchange involving $\tilde{\Pg}_q$ and one of the progressions $\tilde{\Pg}_{m-1},\ldots,\tilde{\Pg}_{q-1}$. 
    	Iteratively, this results in a feasible schedule with a lower total weighted completion time, which contradicts our assumptions. 

        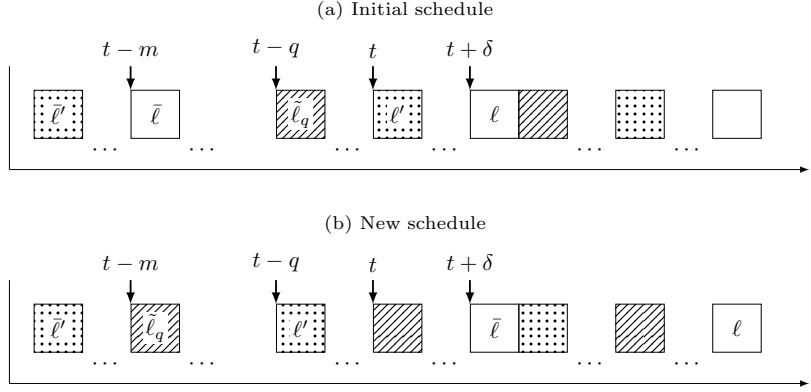
\begin{figure*}
        	\centering
        	\caption{Demonstration of the progression interchange in case (iii) in the proof of Theorem~\ref{thm:NonInterruption}}\label{fig:Proof_NonInterruption_Interchange_3}
        	\subfloat[Initial schedule\label{fig:Proof_NonInterruption_3A}]{
        		\scalebox{0.85}{
        			\begin{tikzpicture}[scale=0.75]
        				
        				\pgfmathsetmacro{\m}{2.15}
        				
        				\draw[-] (0.5,0) -- (0.5, -\m);
        				\draw[-latex] (0.5, -\m) -- node[pos=1,below] {} (17, -\m);
        				
        				\draw[-latex,thick] (8,0)node[above]{$t$} --(8,-0.5);
                        \draw[-latex,thick] (6,0)node[above]{$t-q$} --(6,-0.5);
                        \draw[-latex,thick] (10,0)node[above]{$t+\delta$} --(10,-0.5);
        				\draw[-latex,thick] (3,0)node[above]{$t-m$} --(3,-0.5);
        				
        				\foreach \x in {1}{ 
        						\node[job4] (4) at (\x,-1) {};\node at (4.center) [lab] {$\bar{\ell}'$};
        				}
        				\foreach \x in {3}{ 
        					\node[job1] (41) at (\x,-1) {$\bar{\ell}$};
        				}
        				\foreach \x in {6}{ 
        					\node[job2] (2) at (\x,-1) {};\node at (2.center) [lab] {$\tilde{\ell}_q$};
        				}
        				\foreach \x in {8}{ 
        					\node[job4] (4) at (\x,-1) {};\node at (4.center) [lab] {${\ell}'$};
        				}
        				\foreach \x in {10}{ 
        					\node[job1] (1) at (\x,-1) {};\node at (1.center) [lab] {${\ell}$};
        				}
        				\foreach \x in {13}{ 
        					\node[job4] (4) at (\x,-1) {};
        				}
        				\foreach \x in {15}{ 
        					\node[job1] (1) at (\x,-1) {};
        				}
        				\foreach \x in {11}{ 
        					\node[job2] (4) at (\x,-1) {};% \node at (4.center) [lab] {${\ell}'$};
        				}
                        \foreach \x in {1,3,6,8,11,13}{
                            \node[job_tr] (41) at (\x+1,-1.75) {$\cdots$};
                        }
        				
        	\end{tikzpicture}}}
        	
        	\subfloat[New schedule\label{fig:Proof_NonInterruption_3B}]{
        		\scalebox{0.85}{
        			\begin{tikzpicture}[scale=0.75]
        				
        				\pgfmathsetmacro{\m}{2.15}
        				
        				\draw[-] (0.5,0) -- (0.5, -\m);
        				\draw[-latex] (0.5, -\m) -- node[pos=1,below] {} (17, -\m);
        				
        				\draw[-latex,thick] (8,0)node[above]{$t$} --(8,-0.5);
                        \draw[-latex,thick] (6,0)node[above]{$t-q$} --(6,-0.5);
                        \draw[-latex,thick] (10,0)node[above]{$t+\delta$} --(10,-0.5);
        				\draw[-latex,thick] (3,0)node[above]{$t-m$} --(3,-0.5);
        				
        				\foreach \x in {1}{ 
        					\node[job4] (4) at (\x,-1) {};\node at (4.center) [lab] {$\bar{\ell}'$};
        				}
        				\foreach \x in {10}{ 
        					\node[job1] (41) at (\x,-1) {$\bar{\ell}$};
        				}
        				\foreach \x in {3}{ 
        					\node[job2] (2) at (\x,-1) {};\node at (2.center) [lab] {$\tilde{\ell}_q$};
        				}
        				\foreach \x in {8}{ 
        					\node[job2] (2) at (\x,-1) {};%\node at (2.center) [lab] {$\tilde{\ell}_q$};
        				}
        				\foreach \x in {6}{ 
        					\node[job4] (4) at (\x,-1) {}; \node at (4.center) [lab] {${\ell}'$};
        				}
        				\foreach \x in {11}{ 
        					\node[job4] (4) at (\x,-1) {}; %\node at (4.center) [lab] {${\ell}'$};
        				}
        				\foreach \x in {15}{ 
        					\node[job1] (1) at (\x,-1) {};\node at (1.center) [lab] {${\ell}$};
        				}
        				\foreach \x in {13}{ 
        					\node[job2] (4) at (\x,-1) {};% \node at (4.center) [lab] {${\ell}'$};
        				}
                        \foreach \x in {1,3,6,8,11,13}{
                            \node[job_tr] (41) at (\x+1,-1.75) {$\cdots$};
                        }
        				
        	\end{tikzpicture}}}
        \end{figure*}
\end{proof}

% Note that this result does not hold for all regular objective functions. For the makespan, for example, there are instances where optimal schedules must contain interruptions \citep{Yu2020}.

\section{Complexity and priority rules}\label{sec:m_input} %Arbitrary-size flow shops

% Let us now turn to the general setting $F|reentry, \ p_{ijk} = 1|\sum w_jC_j$ with arbitrary weights. Because the number of machines $m$ in our flow shop is part of the input, we refer to this setting as an arbitrary-size flow shop (as opposed to flow shops of fixed size $m$). We show that this problem is strongly NP-hard and can, therefore, not be solved efficiently unless P = NP. However, we give a simple list scheduling rule that extends the LRL rule to the weighted case and provides near-optimal solutions.

In this section, we analyze the complexity of ($F|reentry, \ p_{ijk} = 1|\sum w_jC_j$) and present two priority rules. 
In Subsection~\ref{subsec:NP_hardness}, we show that the problem is strongly NP-hard by a reduction from 3-PARTITION. 
In Subsection~\ref{subsec:LRL}, we present the LRL priority rule and show that it minimizes the total completion time as well as the total weighted completion time if the weights are agreeable.
In Subsection~\ref{subsec:WLRL}, we present the WLRL priority rule and show its worst-case performance ratio. % of $\frac{1}{2}(1+\sqrt{2})$.

\subsection{Complexity}\label{subsec:NP_hardness}

The decision problem 3-PARTITION is a well-known strongly NP-hard decision problem.
\begin{definition}
	The decision problem 3-PARTITION is defined as follows. Given integers~${a_1, \ldots, a_{3q},b,q}$, such that $\frac{b}{4}<a_j<\frac{b}{2}$ and $qb = \sum_{j=1}^{3q} a_j$, is there a partition of the first $3q$ integers into~$q$ disjoint triplets $\St_1, \ldots, \St_q$ with $\sum_{a_j\in S_t } a_j = b$ for all $t = 1,\ldots,q$?
\end{definition}

Before we present the reduction, we provide another formulation of the objective function. We denote the progression that starts at times $t = 0,1,\ldots,m-1$ by~$\Pg_{t}$.
 Next, we rewrite the objective for the case $w_j = \Lps_j$ for all jobs $j$. The result is similar to \cite[page 270]{Eastman1964} for a parallel machine setting.
\begin{lemma}
    \label{lemma: Formula for the Objective}
    Given m machines and weights $w_j = \Lps_j$, we obtain for any non-interruptive schedule without intermediate idle times:
    \begin{alignat*}{3}
        \sum_{j=1}^{n} \Lps_j C_j = & &&\frac{m}{2} \cdot \sum_{t=1}^{m} \left(\lvert\Pg_t\rvert^2 + \lvert\Pg_t\rvert \frac{2 \cdot (t-1)}{m}\right) && \\
        & &+ &\frac{m}{2}\cdot \sum_{j=1}^{n} \Lps_j^2. &&
    \end{alignat*}
\end{lemma}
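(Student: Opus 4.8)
The plan is to compute $\sum_{j=1}^n \Lps_j C_j$ directly by exploiting the rigid structure that a non-interruptive schedule without intermediate idle times imposes on the completion times. Because $p=1$ and there are $m$ machines, a loop that starts on machine~$1$ at time $s$ completes on machine~$m$ at time $s+m$. Consequently the loops of progression $\Pg_t$, which begins at time $t-1$, start at times $t-1,\,(t-1)+m,\,(t-1)+2m,\ldots$, and the loop occupying position $r$ (counting from $1$) in $\Pg_t$ completes at time $(t-1)+rm$. Since the schedule is non-interruptive, Theorem~\ref{thm:NonInterruption}'s defining property forces all $\Lps_j$ loops of a job to form one consecutive block inside a single progression, so a job's completion time is governed solely by the position of its last loop.

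First I would fix a progression $\Pg_t$ and list the jobs it contains in processing order with loop counts $L_1,\ldots,L_{k_t}$, so that $|\Pg_t| = \sum_i L_i$. The $i$-th such job finishes its last loop at position $\sum_{s\le i} L_s$, giving completion time $(t-1)+m\sum_{s\le i}L_s$. Using $w_j=\Lps_j=L_i$, the contribution of $\Pg_t$ to the objective is
\[
\sum_i L_i (t-1) + m\sum_i L_i \sum_{s\le i} L_s = (t-1)\,|\Pg_t| + m\sum_{s\le i} L_s L_i .
\]

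The heart of the computation is the elementary identity $2\sum_{s<i} L_s L_i = \big(\sum_i L_i\big)^2 - \sum_i L_i^2$, which yields $\sum_{s\le i} L_s L_i = \tfrac12 |\Pg_t|^2 + \tfrac12 \sum_i L_i^2$. Substituting, the contribution of $\Pg_t$ becomes $(t-1)|\Pg_t| + \tfrac{m}{2}|\Pg_t|^2 + \tfrac{m}{2}\sum_i L_i^2$. I would then sum over $t=1,\ldots,m$ and observe that, since every job lies in exactly one progression, the terms $\tfrac{m}{2}\sum_i L_i^2$ collected across all progressions amount precisely to $\tfrac{m}{2}\sum_{j=1}^n \Lps_j^2$. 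Finally, rewriting $(t-1)|\Pg_t| = \tfrac{m}{2}\,|\Pg_t|\cdot\tfrac{2(t-1)}{m}$ matches the claimed formula term by term.

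The computation itself is routine; the only real obstacle is the bookkeeping — correctly translating ``position within a progression'' into a completion time, and keeping the intra-progression quadratic terms $\sum_i L_i^2$ separate from the cross term $|\Pg_t|^2$ so that the per-job square sum $\sum_j \Lps_j^2$ is isolated cleanly when summing over all progressions. I would also state explicitly at the outset why the block structure holds, since the entire completion-time accounting rests on the fact that in a non-interruptive schedule the loops of each job stay together within a single progression.
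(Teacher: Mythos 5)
Your proof is correct and follows essentially the same route as the paper's: decompose by progression, write the $i$-th job's completion time as $(t-1)+m\sum_{s\le i}L_s$, apply the identity $\sum_i L_i\sum_{s\le i}L_s=\tfrac12\bigl(|\Pg_t|^2+\sum_i L_i^2\bigr)$, and sum over $t$. The only difference is that you spell out the position-to-completion-time bookkeeping and the block structure of non-interruptive schedules more explicitly, which the paper leaves implicit.
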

\begin{proof}
    Let $1,\ldots,n_{t}$ be the jobs assigned to progression $\Pg_t$. The objective for these jobs can be rewritten:
    \begin{alignat*}{3}
        \sum_{j=1}^{n_t} \Lps_j C_j & = && 
        \sum_{j=1}^{n_t} \Lps_j \left( t-1 + \sum_{k=1}^j \Lps_k \cdot m \right) && \\
        & = && \frac{m}{2} \cdot\left( \lvert\Pg_t\rvert^2 + \sum_{j=1}^{n_t} \Lps_j^2\right) && \\
        & & + &\lvert\Pg_t\rvert\cdot\left( t-1\right). &&
    \end{alignat*}
    Summing the terms for all progressions $\Pg_1,\ldots,\Pg_t$ yields the claimed formula.
\end{proof}
The above calculation also shows that the objective is independent of the ordering of jobs within a progression if $w_j=\Lps_j$ holds for all jobs. Using Lemma~\ref{lemma: Formula for the Objective}, it can be shown that it is optimal to balance the number of loops in the progressions as much as possible.

% We are proving this by shifting integers in Lemma~\ref{lemma: Towards the Mean}. Those integers may be seen as jobs when reading the Lemma first. The objective enhancement from the completion time dominates the possible objective worsening of the an assignment to another progression.

% \begin{lemma}
%     \label{lemma: Towards the Mean}
%     Given nonnegative \emph{integers} $\Load_i,\Load_l,z$ such that $\Load_i - \Load_l > z>0$ and $i,j = 1,\ldots,m$, we obtain with $\Load_i^{'}:=\Load_i-z$ and $\Load_l^{'}:=\Load_l+z$:
%     \[
%          \underbrace{\Load_i \cdot \left (\Load_i + \frac{2(i-1)}{m} \right ) + \Load_l \cdot \left (\Load_l+ \frac{2(l-1)}{m} \right)}_{v} > 
%          \underbrace{\Load_i^{'}\cdot \left(\Load_i^{'} + \frac{2(i-1)}{m}\right) + \Load_l^{'}\cdot\left(\Load_l^{'} + \frac{2(l-1)}{m}\right)}_{v'}.
%     \]
% \end{lemma}
% \begin{proof}
%     We are adapting the argument from~\cite{Schwiegelshohn2011} about the convexity of the square function. One can calculate:
%     \[
%             v-v' = \underbrace{2z}_{>0}\cdot\left(\underbrace{\Load_i-\Load_l-z}_{\geq 1}-\underbrace{\frac{l-i}{m}}_{<1}\right)> 0.
%     \]
%     We use the integrality of the loads $\Load_i, \Load_l$ and the increment $z$ to bound the difference by~$1$.
% \end{proof}

% Having proven those preparatory results, we are ready to present the main result of this subsection by applying Lemma~\ref{lemma: Towards the Mean}.

\begin{theorem}
    The scheduling problem $F|reentry, \ p_{ijk} = 1|\sum w_jC_j$ is strongly NP-hard.
\end{theorem}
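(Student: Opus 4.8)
The plan is to reduce from 3-PARTITION, using Lemma~\ref{lemma: Formula for the Objective} to collapse the scheduling question into a purely combinatorial balancing question. Given a 3-PARTITION instance $a_1,\ldots,a_{3q},b,q$, I would construct an instance of $F|reentry,\ p_{ijk}=1|\sum w_jC_j$ with $m=q$ machines and $n=3q$ jobs, where job $j$ has $\Lps_j = M a_j$ loops and weight $w_j = \Lps_j$ for a sufficiently large multiplier $M$ fixed later. Taking $w_j=\Lps_j$ is legitimate, since hardness of this special case implies hardness of the general weighted problem, and it is exactly the case covered by Lemma~\ref{lemma: Formula for the Objective}.

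First I would argue that it suffices to consider non-interruptive schedules without intermediate idle time. By Theorem~\ref{thm:NonInterruption} some optimal schedule is non-interruptive, and because an optimal schedule has no unforced idleness on machine~$1$, machine~$1$ processes exactly one loop at each of the times $0,1,\ldots,N-1$ with $N=\sum_j \Lps_j = Mqb$. Hence the loops split into exactly $m$ progressions, namely those loops whose start times agree modulo $m$, and each job lies entirely within one progression. This is precisely the setting of Lemma~\ref{lemma: Formula for the Objective}, so the objective equals $\frac{m}{2}\sum_{t=1}^m\big(\lvert\Pg_t\rvert^2 + \lvert\Pg_t\rvert\tfrac{2(t-1)}{m}\big) + \frac{m}{2}\sum_j\Lps_j^2$. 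The last sum is a schedule-independent constant, so minimizing the objective amounts to choosing the progression sizes $\lvert\Pg_t\rvert$ (equivalently, an assignment of jobs to the $m$ progressions) that minimize $\frac{m}{2}\sum_t\lvert\Pg_t\rvert^2 + \sum_t\lvert\Pg_t\rvert(t-1)$ subject to $\sum_t\lvert\Pg_t\rvert = N$.

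The combinatorial heart is then the equivalence between perfectly balanced assignments and 3-partitions. Since $\sum_t\lvert\Pg_t\rvert=Mqb$ is fixed, the dominant quadratic term $\frac{m}{2}\sum_t\lvert\Pg_t\rvert^2$ is minimized precisely when every progression carries $\lvert\Pg_t\rvert = Mb$, i.e., when the $a_j$ split into $m=q$ groups each summing to $b$; the constraint $\tfrac{b}{4}<a_j<\tfrac{b}{2}$ forces each such group to contain exactly three indices, so a balanced assignment exists if and only if the 3-PARTITION instance is a ``yes'' instance. I would set the decision threshold $Z^\star$ to the objective of this balanced assignment, which by the Lemma is independent of how the equal-size progressions are matched to the positions $t$, and claim that a schedule of cost at most $Z^\star$ exists if and only if 3-PARTITION is solvable. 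The ``if'' direction is immediate by exhibiting the balanced schedule.

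The main obstacle is the secondary ``position'' term $\sum_t\lvert\Pg_t\rvert(t-1)$: an unbalanced assignment strictly increases the quadratic term but could in principle decrease the position term, so I must ensure the former always dominates. Writing $\lvert\Pg_t\rvert = M(b+e_t)$ with integers $e_t$ summing to zero and not all zero, the increase in the quadratic part is $\frac{m}{2}M^2\sum_t e_t^2 \ge mM^2$ (as $\sum_t e_t^2\ge 2$), whereas the change in the position part is $M\sum_t e_t(t-1)$, of magnitude at most $M(m-1)\sum_t\lvert e_t\rvert \le 2m^2 bM$. Choosing $M = 2mb+1$ — polynomial in the input because $q$ and $b$ are polynomially bounded for strongly NP-hard 3-PARTITION — makes the quadratic increase strictly exceed any possible gain in the position term, so every unbalanced assignment costs strictly more than $Z^\star$. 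This closes the ``only if'' direction and, with the explicit balanced schedule, yields a polynomial reduction establishing strong NP-hardness. The remaining detail is to confirm that $M$ and $Z^\star$ are polynomial-time computable and that restricting to balanced, non-interruptive, idle-free schedules is without loss of optimality, both of which follow from Theorem~\ref{thm:NonInterruption} together with the no-unforced-idleness property.
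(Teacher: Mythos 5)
Your proposal is correct and follows essentially the same route as the paper: a reduction from 3-PARTITION with $m=q$ machines and $w_j=\Lps_j$ proportional to $a_j$, invoking Lemma~\ref{lemma: Formula for the Objective} to turn optimality into balancing the progression sizes $\lvert\Pg_t\rvert$ at the common value $b$ (up to your scaling). The only difference is your multiplier $M$: the paper dispenses with it by observing that for integer imbalances $e_t$ the quadratic increase $\sum_t e_t^2 \geq \sum_t \lvert e_t\rvert$ already strictly dominates the position term's possible decrease, which is bounded by $\tfrac{m-1}{m}\sum_t \lvert e_t\rvert$; your scaled variant is equally valid and preserves strong NP-hardness since $M$ is polynomially bounded in the unary input.
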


\begin{proof}
    Let $(b,q,a_1,\ldots, a_{3q})$ be an instance of 3-PARTITION. For each item $a_k$, $k=1,\ldots, 3q$, we define a job $k$ with $w_{k} = \Lps_{k} = a_k$. Further, we choose the number of machines to be equal to the number of triplets, i.e., $m = q$. By Lemma~\ref{lemma: Formula for the Objective}, the cost of a non-interruptive schedule without intermediate idle times on this instance is given as $\frac{m}{2} \left( \sum_{t = 1}^m \left(\lvert\Pg_t\rvert^2 + \lvert\Pg_t\rvert\frac{2 \cdot (t-1)}{m}\right) +\sum_{k=1}^{3q} \Lps_{k}^2\right)$. Notice that the last term does not depend on the assignment of jobs to progressions. We show that this scheduling instance has an objective value of at most $\frac{m}{2} \left( mb^2 + (m-1) b +  \sum_{k=1}^{3q} \Lps_k^2 \right)$ if and only if $(b,q,a_1,\ldots, a_{3q})$ is a YES-instance of 3-PARTITION.

    Let the instance be a YES-instance of 3-PARTITION and the jobs assigned into triplets $S_1, S_2, \ldots, S_q$ such that $\sum_{a_j \in S_t} a_j = b$ for all $t=1,\ldots,q$. We can assign the jobs from $S_t$ to progression $\Pg_t$ to get the above objective value.

    Reversely, assume there exists a schedule of value at most the above value. If ${\lvert\Pg_1\rvert = \lvert\Pg_2\rvert = \ldots = \lvert\Pg_m\rvert}$, then we are done. Otherwise, two progressions~$\Pg_i, \Pg_l$ must exist such that $\lvert\Pg_i\rvert \geq \lvert\Pg_l\rvert + 2$. Since the numbers of loops in the progressions are not perfectly balanced, we can use $\sum_{t = 1}^m \lvert\Pg_t\rvert = q \cdot b$ to show $\sum_{t = 1}^m \left(\lvert\Pg_t\rvert^2 + \lvert\Pg_t\rvert\frac{2 \cdot (t-1)}{m}\right) > mb^2 + (m-1) b$. 
    % \[
    %     \frac{m}{2} \left( \sum_{t = 1}^m \left(\NLPg(\Pg_t)^2 + \NLPg(\Pg_t) \frac{2 \cdot (i-1)}{m}\right) +\sum_{k=1}^{3q} \Lps_{k}^2\right) > 
    %     \frac{m}{2} \left( mB^2 + (m-1) B +  \sum_{k=1}^{3q} \Lps_k^2 \right)
    % \]
    This result contradicts our assumption, which concludes the proof.
\end{proof}

%----------------------------------------------------------------------

\subsection{Least Remaining Loops priority rule}\label{subsec:LRL}

In this subsection, we introduce the ``Least Remaining Loops first'' (LRL) priority rule.
We show that this rule minimizes the total (unweighted) completion time, i.e., $w_j=1$ for all jobs $j=1,\ldots,n$. 
Also, we show that it minimizes the total weighted completion time if the weights are agreeable. 
% Then, we consider the total weighted completion time objective and assume that the weights are agreeable with the number of loops. For this case, we show that the LRL rule minimizes the total weighted completion time.

\begin{definition}\label{def:LRL}
	The LRL priority rule selects, whenever machine~$1$ becomes available, the next loop of a job with the least remaining loops among all available jobs. 
\end{definition}

The following example displays an LRL schedule. 

\begin{example}\label{ex:IllExample_LRL}
	Consider the same instance as in Example~\ref{ex:IllExample} with ${n =5}$ and ${w_j = 1}$ for all $j = 1,\ldots,5$.
	The LRL rule generates the permutation sequence $\Sched = [\ell_{11};\allowbreak\ell_{21};\allowbreak\ell_{31};\allowbreak\ell_{12};\allowbreak\ell_{22};\allowbreak\ell_{32};\allowbreak\ell_{41};\allowbreak\ell_{51};\allowbreak\ell_{42};\allowbreak\ell_{52};\allowbreak\ell_{43};\allowbreak\ell_{53};\allowbreak\ell_{54}]$. The Gantt chart for this schedule is shown in Figure~\ref{fig:ExampleGanttChart_LRL}. 
	The total completion time of this schedule is $C_1+C_2+C_3+C_4+C_5= 55$. 
\end{example}

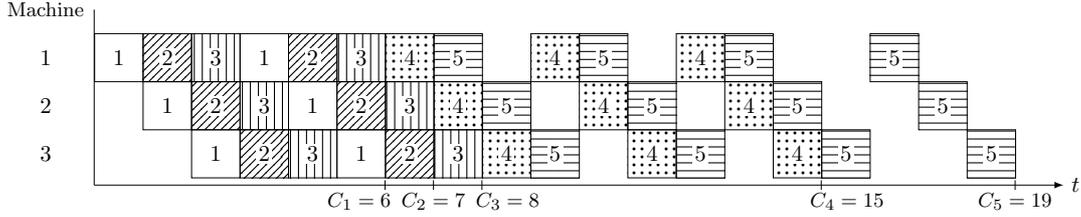
\begin{figure*}
	\centering
	\caption{Schedule generated via priority rule LRL}\label{fig:ExampleGanttChart_LRL}
	\scalebox{0.85}{
	\begin{tikzpicture}[scale=0.75]
		
		\tikzset{job/.style={rectangle,draw,anchor=west,minimum height=0.75cm,minimum width=0.75cm}}				
		\pgfmathsetmacro{\m}{3.65}
		
		\draw[-] (0,0) -- (0, -\m);
		\draw[-latex] (0, -\m) -- node[pos=1,right] {$t$} (20, -\m);
		
		%			\node (Machines) at (-1,0) {Machine};
		%			\node (Mone) at (-1,-1) {$1$};
		%			\node (Mtwo) at (-1,-2) {$2$};
		%			\node (Mthree) at (-1,-3) {$3$};
		
		\node (Machines) at (-1,0) {\small Machine};
		\foreach \x in {1,2,3}{
			\node (Mone) at (-1,-\x) {$\x$};
		}
		
		%M1
		\foreach \x in {0,3}{
			\node[job1] (1) at (\x,-1) {}; \node at (1.center) [lab] {$1$};
			\node[job1] (1) at (\x+1,-2) {}; \node at (1.center) [lab] {$1$};
			\node[job1] (1) at (\x+2,-3) {}; \node at (1.center) [lab] {$1$};
		}
		\foreach \x in {1,4}{
			\node[job2] (2) at (\x,-1) {}; \node at (2.center) [lab] {$2$};
			\node[job2] (2) at (\x+1,-2) {}; \node at (2.center) [lab] {$2$};
			\node[job2] (2) at (\x+2,-3) {}; \node at (2.center) [lab] {$2$};
		}
		\foreach \x in {2,5}{
			\node[job3] (3) at (\x,-1) {}; \node at (3.center) [lab] {$3$};
			\node[job3] (3) at (\x+1,-2) {}; \node at (3.center) [lab] {$3$};
			\node[job3] (3) at (\x+2,-3) {}; \node at (3.center) [lab] {$3$};
		}
		\foreach \x in {6,9,12}{
			\node[job4] (4) at (\x,-1) {}; \node at (4.center) [lab] {$4$};
			\node[job4] (4) at (\x+1,-2) {}; \node at (4.center) [lab] {$4$};
			\node[job4] (4) at (\x+2,-3) {}; \node at (4.center) [lab] {$4$};
		}		
		\foreach \x in {7,10,13,16}{
			\node[job5] (5) at (\x,-1) {}; \node at (5.center) [lab] {$5$};
			\node[job5] (5) at (\x+1,-2) {}; \node at (5.center) [lab] {$5$};
			\node[job5] (5) at (\x+2,-3) {}; \node at (5.center) [lab] {$5$};
		}
		
		\foreach \x in {6}{
			\node[anchor=north,xshift=-4mm] (C4) at (\x,-\m) {\small $C_1=\x$};
			\draw (\x,-0.1-\m) -- (\x,0.1-\m);
		}
		\foreach \x in {7}{
			\node[anchor=north,xshift=0mm] (C4) at (\x,-\m) {\small $C_2=\x$};
			\draw (\x,-0.1-\m) -- (\x,0.1-\m);
		}
		\foreach \x in {8}{
			\node[anchor=north,xshift=4mm] (C4) at (\x,-\m) {\small $C_3=\x$};
			\draw (\x,-0.1-\m) -- (\x,0.1-\m);
		}
		\foreach \x in {15}{
			\node[anchor=north,xshift=4mm] (C4) at (\x,-\m) {\small $C_4=\x$};
			\draw (\x,-0.1-\m) -- (\x,0.1-\m);
		}
		\foreach \x in {19}{
			\node[anchor=north] (C4) at (\x,-\m) {\small $C_5=\x$};
			\draw (\x,-0.1-\m) -- (\x,0.1-\m);
		}

	\end{tikzpicture}}
\end{figure*}

After a loop of a job has been scheduled, that job will have one less loop remaining the next time it is available to be processed on machine~$1$. 
Thus, the job's priority is even higher than before. 
This implies that the next loop of that job will be scheduled immediately without interruption.

\begin{lemma}
	Schedules generated via the LRL rule are non-interruptive.
\end{lemma}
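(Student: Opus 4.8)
The plan is to argue by contradiction using the \emph{earliest} interruption. Suppose some LRL schedule is interruptive, and let $t$ be the first time at which a job~$j$ completes a loop $\ell_{j,k-1}$ on machine~$m$ with $k \le \Lps_j$ (so that job~$j$ still has a remaining loop), yet its next loop $\ell_{jk}$ does not start on machine~$1$ at time~$t$. The goal is to show that the LRL rule must in fact start $\ell_{jk}$ at time~$t$, contradicting the existence of any interruption at all.

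First I would exploit the unit-processing-time structure. Since $p=1$, every loop that starts on machine~$1$ at an integer time $s$ traverses the shop without delay and completes on machine~$m$ at time $s+m$; in particular, machine~$m$ finishes at most one loop per unit of time, so job~$j$ is the \emph{unique} job completing a loop at time~$t$. Moreover, machine~$1$ is free at time~$t$ (it released the loop it processed during $[t-1,t]$), so by the LRL rule it cannot remain idle while job~$j$ is available: it must start some loop. Hence the only way an interruption can occur at~$t$ is for LRL to start a loop of a \emph{different} job, which by definition of the rule requires that job to have no more remaining loops than job~$j$.

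The key step is to characterize the jobs available at time~$t$ and to compare their remaining-loop counts. Because $t$ is the first interruption, every job that has already started is, at time~$t$, either currently being processed (hence unavailable), already finished, or completing a loop exactly at time~$t$; the last case singles out job~$j$. Consequently the only other available jobs are those still sitting at their first loop, which are available from time~$0$ onward. For such a first-loop job $j'$ I would use that $j'$ was already available at time $t-m \ge 0$, the moment at which LRL selected job~$j$ for loop $\ell_{j,k-1}$. Since LRL picks a job with the fewest remaining loops, job~$j$ had at most as many remaining loops then as $j'$, i.e. $\Lps_j-(k-2) \le \Lps_{j'}$. As job~$j$ has since advanced by exactly one loop, at time~$t$ it has $\Lps_j-(k-1) < \Lps_{j'}$ remaining loops, strictly fewer than every available first-loop job. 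Thus job~$j$ uniquely minimizes the number of remaining loops among all available jobs at time~$t$, so LRL starts $\ell_{jk}$ at time~$t$, the desired contradiction.

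The main obstacle I anticipate is the bookkeeping in the third step: one must argue cleanly, using only the minimality of $t$, that no job can be in a ``completed-but-waiting'' state at time~$t$ except $j$ itself, and one must ensure the strict inequality survives the tie-breaking inherent in the priority rule. The latter is precisely what is gained by observing that job~$j$'s remaining-loop count dropped by one between $t-m$ and~$t$: even if $j$ was only tied with $j'$ when selected at time $t-m$, after completing one further loop it is strictly ahead, so no tie can resurface at time~$t$.
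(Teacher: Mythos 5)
Your proof is correct and rests on the same key observation as the paper's: after completing a loop, job~$j$'s remaining-loop count has dropped by one, so the weak priority it had when it was selected at time $t-m$ becomes strict at time $t$, forcing LRL to reschedule it immediately. The paper phrases this as a forward induction (a job, once scheduled, retains unique priority at $t+m$), whereas you argue by contradiction at the earliest interruption and explicitly verify that only first-loop jobs can compete at time $t$; the substance is identical, with your version merely spelling out details the paper leaves to the reader.
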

\begin{proof}
	If a loop of a job is scheduled at time $t$, it has priority over all jobs that are available at time $t$. The next time a loop of this job is available, it has one less loop than before. One can inductively show that this job is the unique one with the lowest number of remaining loops left among all jobs available at time $t+m$. Therefore, it again has priority at $t+m$ according to the LRL rule.
\end{proof}

We now show that LRL minimizes the total completion time. The proof uses a similar interchange argument as in Theorem~\ref{thm:NonInterruption}. 
However, it focuses on the first interruption rather than the last. 

\begin{theorem}\label{thm:LRL_MinTCT}
	LRL minimizes the total completion time. % for a reentrant flow shop with unit processing times. 
\end{theorem}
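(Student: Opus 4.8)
The plan is to prove optimality of the LRL rule by an exchange argument that mirrors the proof of Theorem~\ref{thm:NonInterruption}, but anchored at the \emph{first} deviation from the rule rather than at the last interruption. First I would invoke Theorem~\ref{thm:NonInterruption} together with the preceding lemma (LRL schedules are non-interruptive) to restrict attention to non-interruptive schedules. In such a schedule every job occupies a contiguous block of loops inside a single progression, so each completion time decomposes as $C_j = o + m\,L$, where $o$ is the start time of the progression containing $j$ and $L$ is the cumulative number of loops up to and including $j$ within that progression. This makes the effect of any local reordering on $\sum_j C_j$ fully explicit.

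Next I would take an optimal non-interruptive schedule $\Sched^{*}$ that agrees with the LRL schedule on the longest possible prefix of the machine-$1$ loop sequence, and assume for contradiction that $\Sched^{*}$ is not the LRL schedule. Let $t$ be the first time the two differ. Since they coincide before $t$, the available loops and the remaining-loop counts at time~$t$ are identical in both. At $t$, LRL starts a loop of the available job $j$ with the fewest remaining loops, whereas $\Sched^{*}$ starts a loop $\ell'$ of some job $j'$ with strictly more remaining loops, $r_{j'}>r_j$ (ties are removed by a cost-neutral relabeling). Two situations arise: either job $j$ has just completed a loop on machine~$m$ at time $t$, so running $\ell'$ \emph{interrupts} $j$; or $j$ is a fresh job preferred by LRL over the longer fresh job $j'$.

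The core step is to advance job $j$ into the slot at time $t$ and show that the total completion time does not increase. When $j$ and $j'$ share a progression this is a single adjacent interchange whose effect on the two jobs equals $m(\Lps_j-\Lps_{j'})<0$ while leaving all later jobs untouched, so the cost strictly drops. When they lie in different progressions, or when $j$ is interrupted, I would use the progression interchange of Section~\ref{sec:Preliminiaries}: with unit weights, interchanging the earlier progression $\Pg'=\Pg(\ell')$ (started at $t$) with the later progression $\Pg$ carrying job $j$'s continuation changes the objective by $\delta\,[\Wfl(\Pg')-\Wfl(\Pg)]$, where $\Wfl(\cdot)$ now simply counts the jobs that complete inside the progression. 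Exactly as in cases (i)--(iii) of Theorem~\ref{thm:NonInterruption}, I would interchange directly when $\Wfl(\Pg')\le\Wfl(\Pg)$, and otherwise cascade the interchange through the intervening progressions started in the window $(t-m,t)$ until a feasible schedule of no greater cost is reached. In every case the new schedule is non-interruptive, has total completion time at most that of $\Sched^{*}$, and agrees with the LRL schedule on a strictly longer prefix, contradicting the choice of $\Sched^{*}$; iterating drives the common prefix to full length, so the LRL schedule is optimal.

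I expect the main obstacle to be the cross-progression bookkeeping. Unlike the single adjacent swap, a progression interchange moves blocks that sit at different offsets and have different lengths, and the sign of the induced cost change is governed by $\Wfl$ (the number of completing jobs) rather than directly by the remaining-loop counts $r_j<r_{j'}$ that trigger the LRL violation. Showing that advancing the fewest-remaining-loops job can always be realized by a possibly cascaded interchange that neither raises $\sum_j C_j$ nor reintroduces an earlier deviation, and that this cascade terminates in a feasible schedule, is the delicate part and is precisely where the case analysis of Theorem~\ref{thm:NonInterruption} must be re-run at the first deviation.
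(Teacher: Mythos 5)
Your skeleton---restrict to non-interruptive schedules, anchor the exchange argument at the \emph{first} deviation from LRL, and split on the comparison of $\Wfl(\Pg')$ and $\Wfl(\Pg)$---is the paper's, and your handling of the case $\Wfl(\Pg')\le\Wfl(\Pg)$ by a direct progression interchange is exactly the paper's case (i). There is, however, a genuine gap in the complementary case. First, the ``running $\ell'$ interrupts $j$'' situation you allow for cannot occur: since the schedule is non-interruptive and agrees with LRL before time $t$, both $\ell$ and $\ell'$ are necessarily the \emph{first} loops of fresh jobs $j$ and $j'$ (the paper states this explicitly), so the LRL violation reduces to $\Lps_j\le\Lps_{j'}$. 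More importantly, when $\Wfl(\Pg')>\Wfl(\Pg)$ you propose to cascade progression interchanges ``exactly as in cases (ii)--(iii) of Theorem~\ref{thm:NonInterruption}.'' Those cases are powered by an interruption: they detach job $j$'s \emph{preceding} loop $\bar\ell$ from its slot at $t-m$ and reshuffle the progressions started in $(t-m,t)$ into the freed slot. Here there is no preceding loop and no slot to free, and the direct interchange of $\Pg'$ (started at $t$) with $\Pg$ (started at $t+\delta$) changes the objective by $\delta\,[\Wfl(\Pg')-\Wfl(\Pg)]>0$, i.e.\ it makes the schedule worse. The cascade has nothing to latch onto, so this branch of your argument does not close.

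The missing move is a whole-\emph{job} interchange rather than a progression interchange: swap all loops of $j$ and $j'$, leaving everything else in place. Both jobs occupy contiguous blocks ($j'$ heads $\Pg'$ from time $t$, $j$ heads $\Pg$ from time $t+\delta$), so after the swap every job completing after $j'$ in $\Pg'$ finishes earlier by $m(\Lps_{j'}-\Lps_j)$, every job completing after $j$ in $\Pg$ finishes later by the same amount, and the changes $\mp\delta$ to $C_j$ and $C_{j'}$ cancel under unit weights; the net change is $m(\Lps_{j'}-\Lps_j)\,[\Wfl(\Pg)-\Wfl(\Pg')]<0$. This is the paper's case (ii), it also covers the non-disjoint situation $\Pg\subset\Pg'$, and it is the only place where the LRL priority $\Lps_j\le\Lps_{j'}$ actually enters the cost comparison---something a progression interchange, which is insensitive to loop counts, can never exploit. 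Your adjacent-swap computation $m(\Lps_j-\Lps_{j'})<0$ is the one-progression shadow of this move; it needs to be carried out across progressions, not replaced by the interruption-removal cascade.
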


\begin{proof}
	The proof is by contradiction. Suppose LRL does not minimize the total completion time. 
	This means that every optimal schedule deviates from LRL at least once. 
	According to Theorem~\ref{thm:NonInterruption}, we can assume that this optimal schedule has no interruptions. 	
	Let time $t$ be the first time the optimal schedule does not act according to LRL with loop $\ell'\coloneqq \ell_{j'k'}$ being scheduled while loop $\ell\coloneqq \ell_{jk}$ would have had priority under LRL. Since the schedule has no interruptions, both $\ell$ and $\ell'$ must be the first loop of their respective jobs $j$ and $j'$. Define $\Pg' \coloneqq \Pg(\ell')$ and $\Pg \coloneqq \Pg(\ell)$. Note that $\Wfl(\Pg)$ is the number of jobs in this progression, since $w_j=1$ for all jobs~$j$.
	We distinguish two cases: 

    \pmb{(i) $\Wfl(\Pg') \leq \Wfl(\Pg)$}. We create a schedule that acts according to LRL at time $t$ by interchanging the disjoint progressions $\Pg'$ and $\Pg$. As $\Wfl(\Pg') \leq \Wfl(\Pg)$, the total weighted completion time of this new schedule is either less than or equal to the original schedule. This contradicts our assumptions. 
	
    \pmb{(ii) $\Wfl(\Pg') > \Wfl(\Pg)$}. We create a schedule that acts according to LRL at time $t$ by interchanging all loops of jobs $j'$ and $j$.
    As $\Lps_j<\Lps_{j'}$, all subsequent jobs in $\Pg'$ have an earlier completion time, while all subsequent jobs in $\Pg$ have a later completion time. 
	As $\Wfl(\Pg') > \Wfl(\Pg)$, the total completion time of the new schedule is lower. This contradicts our assumptions. Note that this interchange also works if the progressions are not disjoint, i.e., $\Pg\subset \Pg'$.
	
\end{proof}

The previous theorem implies that any non-interruptive schedule minimizes the total completion time in case all jobs have the same number of loops, i.e., $\Lps_{j} = \Lps$ for all $j=1,\ldots,n$. 
The class of non-interruptive schedules can be seen as a ``counterpart'' to so-called ``Loopwise-Cyclic'' schedules that minimize the makespan for such instances \citep{Yu2020}. %Those essentially follow the priority rule ``Most Remaining Loops First''.

We now consider instances in which the weights may differ but are ``agreeable'' with the number of loops of the job.

\begin{definition}\label{def:Agreeable_Weights}
    The weights and the numbers of loops are ``agreeable'' if $\Lps_j \leq \Lps_{j'} \Leftrightarrow w_j \geq w_{j'}$ holds for any pair of jobs~$j$ and~$j'$, and at least one of the inequalities is strict.    
\end{definition}

The next lemma shows that if the weights and number of loops are agreeable, LRL minimizes the total weighted completion time. 

\begin{lemma}\label{thm:LRL_Agreeable}
    LRL minimizes the total weighted completion time if the weights and number of loops are agreeable and ties are broken by the highest weight. 
\end{lemma}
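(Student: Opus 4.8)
The plan is to argue by contradiction, mirroring the interchange proof of Theorem~\ref{thm:LRL_MinTCT} but carrying the weights through the progression weight $\Wfl(\cdot)$ and invoking agreeability at the one point where the unweighted argument had nothing to track. Assume LRL does not minimize $\sum_{j=1}^n w_jC_j$. By Theorem~\ref{thm:NonInterruption} I may restrict attention to non-interruptive schedules, so among all \emph{optimal} non-interruptive schedules I would fix one, $\Sched$, that agrees with the LRL sequence on the longest possible initial segment. Since LRL is itself non-interruptive, $\Sched$ cannot coincide with it, so there is a first time $t$ at which $\Sched$ schedules a loop $\ell' \coloneqq \ell_{j'k'}$ while LRL would have scheduled $\ell \coloneqq \ell_{jk}$. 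Exactly as in Theorem~\ref{thm:LRL_MinTCT}, non-interruptiveness together with agreement before $t$ forces any in-progress job to be continued without choice, so both $\ell$ and $\ell'$ must be first loops; hence their remaining loop counts equal the totals $\Lps_j,\Lps_{j'}$. Because LRL (with ties broken by weight) prefers $\ell$, we have $\Lps_j \le \Lps_{j'}$, and agreeability then yields $w_j \ge w_{j'}$, where moreover at least one of $\Lps_j<\Lps_{j'}$ and $w_j>w_{j'}$ is strict (this is exactly what the tie-break by weight guarantees when the loop counts coincide). Write $\Pg' \coloneqq \Pg(\ell')$, $\Pg \coloneqq \Pg(\ell)$, and $\delta \coloneqq S_{jk}-S_{j'k'}>0$.

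In the first case, $\Wfl(\Pg') \le \Wfl(\Pg)$, the two progressions are disjoint: if $\ell$ lay inside $\Pg'$, then $\Pg$ would be a proper suffix of $\Pg'$ and $\Wfl(\Pg)<\Wfl(\Pg')$ would contradict the case hypothesis. I would therefore apply the progression interchange. Its recorded effect on the objective is $\delta\,[\Wfl(\Pg')-\Wfl(\Pg)]\le 0$, so the resulting schedule is again non-interruptive and optimal (its objective cannot fall below the minimum), and it now schedules $\ell$ at time $t$; thus it agrees with LRL on a strictly longer initial segment, contradicting the choice of $\Sched$.

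In the second case, $\Wfl(\Pg') > \Wfl(\Pg)$, a whole-progression interchange would be harmful, so instead I would interchange the two \emph{jobs} $j$ and $j'$ as blocks of loops, moving $j$ to the front of $\Pg'$ and $j'$ to the position vacated by $j$. Setting $\Delta \coloneqq m(\Lps_{j'}-\Lps_j)\ge 0$, the jobs trailing $j'$ in $\Pg'$ (of total weight $\Wfl_A$) shift earlier by $\Delta$, the jobs trailing $j$ in $\Pg$ (of total weight $\Wfl_B$) shift later by $\Delta$, while $j$ and $j'$ themselves move by $\mp\delta$. The net change in the objective is $\delta\,(w_{j'}-w_j)+\Delta\,(\Wfl_B-\Wfl_A)$. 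The first term is $\le 0$ by agreeability; and since the case hypothesis rearranges (using $\Wfl(\Pg')=w_{j'}+\Wfl_A$, $\Wfl(\Pg)=w_j+\Wfl_B$) to $\Wfl_A-\Wfl_B>w_j-w_{j'}\ge 0$, the second term is $\le 0$ and is strictly negative whenever $\Delta>0$. If instead $\Lps_j=\Lps_{j'}$, then $\Delta=0$ but the tie-break forces $w_j>w_{j'}$, making the first term strictly negative. Either way the objective strictly decreases, contradicting optimality of $\Sched$.

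The step I expect to be the real work is the second case, and within it the observation that separates the weighted problem from the unweighted one: the job interchange now produces the extra contribution $\delta\,(w_{j'}-w_j)$ coming from the completion times of $j$ and $j'$, a term that vanishes identically when all weights are equal and that agreeability is precisely needed to sign. I would also have to treat the non-disjoint configuration in which $\ell$ sits inside $\Pg'$, with the same care as the remark at the end of Theorem~\ref{thm:LRL_MinTCT}; there the completion-time bookkeeping for $j$ changes, but moving the shorter, higher-priority, no-lighter job $j$ to the front of the progression can only help, so the sign of the net change is preserved and the contradiction stands.
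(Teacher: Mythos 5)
Your proposal is correct and follows essentially the same route as the paper's proof: a contradiction at the first deviation from LRL, a progression interchange when $\Wfl(\Pg')\leq\Wfl(\Pg)$, and a whole-job interchange of $j$ and $j'$ otherwise, with agreeability (and the weight tie-break) used to sign the extra term $\delta\,(w_{j'}-w_j)$ that the unweighted argument never sees. Your version merely makes the bookkeeping of the net change explicit, which the paper leaves implicit.
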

\begin{proof}
    The proof is similar to the proof of Theorem~\ref{thm:LRL_MinTCT}. Let time $t$ be the first time the optimal schedule does not act according to LRL with loop $\ell'\coloneqq \ell_{j'k'}$ being scheduled while loop $\ell\coloneqq \ell_{jk}$ would have had priority under LRL. Consider the same two cases:

    \pmb{(i) $\Wfl(\Pg') \leq \Wfl(\Pg)$}. Interchange the disjoint progressions $\Pg'$ and $\Pg$ (see proof of Theorem~\ref{thm:LRL_MinTCT}).
	
    \pmb{(ii) $\Wfl(\Pg') > \Wfl(\Pg)$}. Interchange all loops of jobs $j'$ and $j$. 
    As $\Lps_j\leq \Lps_{j'}$, all subsequent jobs in $\Pg'$ have a completion time that is not later than in the original schedule, while all subsequent jobs in $\Pg$ have a completion time that is not earlier. 
	As $\Wfl(\Pg') > \Wfl(\Pg)$ and $w_j \geq w_{j'}$, the total weighted completion time of the new schedule cannot be higher. As one of the inequalities $\Lps_j\leq \Lps_{j'}$ or $w_j \geq w_{j'}$ is strict, the total weighted completion time is strictly lower, which contradicts our assumptions. %Note that this interchange also works if the progressions are not disjoint, i.e. $\Pg\subset \Pg'$.
\end{proof}

\subsection{Weighted Least Remaining Loops priority rule}\label{subsec:WLRL}
In this subsection, we introduce the ``Weighted Least Remaining Loops first'' (WLRL) rule. We show that it produces schedules whose total weighted completion time exceeds the optimal total weighted completion by a factor of at most $\frac{1}{2}\big(1+\sqrt{2}\big) \approx 1.207$. 

%---------------------------------------------------

% In this subsection, we introduce the ``Weighted Least Remaining Loops First'' (WLRL) rule and establish the performance guarantee. % of the WLRL rule. % that is the same as for the $Pm||\sum w_jC_j$ problem.
%Third, we show that the total weighted completion time of schedules generated with the WLRL rule is at most a factor $\frac{1}{2}\big(1+\sqrt{2}\big) \approx 1.207$ larger than that of the optimal schedule.

% ----- WLRL INTRO -----

\begin{definition}\label{def:WLRL}
	The WLRL priority rule selects, whenever machine~$1$ becomes available, the next loop of a job with the highest $\frac{w_j}{\Lps_j}$ ratio among all available jobs. 

%	The ``Weighted Least Remaining Loops first'' (WLRL) priority rule schedules, whenever machine 1 becomes available, the next loop of a job that has, among all available jobs, the highest priority value $\frac{w_j}{\Lps_j}$, where $\Lps_j$ denotes the remaining loops of job $j$. 
\end{definition}

The following example displays a WLRL schedule and shows that the rule may not generate an optimal solution. 

\begin{example}\label{ex:IllExample_WLRL}
	Consider a reentrant flow shop with $m=2$ machines and $n = 3$ jobs. The jobs require $\Lps_1=2,\ \Lps_2=2$, and $\Lps_3=6$ loops, and have weights $w_1=2.2, \ w_2=2.1$, and $w_3 = 6$. 
	The WLRL rule generates the permutation sequence $\Sched = [\ell_{11};\ell_{21};\ell_{12};\ell_{22};\ell_{31};\ell_{32};\ell_{33};\ell_{34};\ell_{35};\ell_{36}]$. The Gantt chart for this schedule is displayed in Figure~\ref{fig:ExampleGanttChart_WLRL_A}. 
	The total weighted completion time of this schedule is $115.3$.%$w_1C_1(\Sched)+w_2C_2(\Sched)+w_3C_3(\Sched)= 115.3$.
	
	The optimal schedule $\tilde{\Sched}$ for this instance is depicted in Figure~\ref{fig:ExampleGanttChart_WLRL_B} and has a total weighted completion time of $101.9$.%$w_1\tilde{C}_1+w_2\tilde{C}_2+w_3\tilde{C}_3= 101.9$.
\end{example}

\begin{figure*}
	\centering
	\caption{Gantt charts for the WLRL schedule and an optimal schedule of Example~\ref{ex:IllExample_WLRL}}\label{fig:ExampleGanttChart_WLRL}
	\subfloat[WLRL schedule $\Sched$]{
		\label{fig:ExampleGanttChart_WLRL_A}
		\scalebox{0.85}{
		\begin{tikzpicture}[scale=0.75]
			\tikzset{job/.style={rectangle,draw,anchor=west,minimum height=0.75cm,minimum width=0.75cm}}			
			\pgfmathsetmacro{\m}{2.65}
			
			\draw[-] (0,0) -- (0, -\m);
			\draw[-latex] (0, -\m) -- node[pos=1,right] {$t$} (17, -\m);
			
			\node (Machines) at (-1,0) {\small Machine};
			\foreach \x in {1,2}{
				\node (Mone) at (-1,-\x) {$\x$};
			}
			
			\draw[-,thick] (4,-\m+0.1) -- node[below,xshift=-5mm] {$C_1(\Sched)=4$} (4, -\m-0.1);
			\draw[-,thick] (5,-\m+0.1) -- node[below,xshift=5mm] {$C_2(\Sched)=5$} (5, -\m-0.1);
			\draw[-,thick] (16,-\m+0.1) -- node[below] {$C_3(\Sched)=16$} (16, -\m-0.1);
			
			\foreach \x in {0,2}{
				\node[job1] (1) at (\x,-1) {}; \node at (1.center) [lab] {$1$};
				\node[job1] (1) at (\x+1,-2) {}; \node at (1.center) [lab] {$1$};
			}
			\foreach \x in {1,3}{
				\node[job2] (2) at (\x,-1) {}; \node at (2.center) [lab] {$2$};
				\node[job2] (2) at (\x+1,-2) {};  \node at (2.center) [lab] {$2$};
			}
			\foreach \x in {4,6,8,10,12,14}{
				\node[job4] (4) at (\x,-1) {};\node at (4.center) [lab] {$3$};
				\node[job4] (4) at (\x+1,-2) {};\node at (4.center) [lab] {$3$};
			}
			\end{tikzpicture}}}
		
	\subfloat[Optimal schedule $\tilde{\Sched}$]{
		\label{fig:ExampleGanttChart_WLRL_B}
		\scalebox{0.85}{
		\begin{tikzpicture}[scale=0.75]
			\tikzset{job/.style={rectangle,draw,anchor=west,minimum height=0.75cm,minimum width=0.75cm}}			
			\pgfmathsetmacro{\m}{2.65}
			
			\draw[-] (0,0) -- (0, -\m);
			\draw[-latex] (0, -\m) -- node[pos=1,right] {$t$} (17, -\m);
			
			\node (Machines) at (-1,0) {\small Machine};
			\foreach \x in {1,2}{
				\node (Mone) at (-1,-\x) {$\x$};
			}
			
			\draw[-,thick] (5,-\m+0.1) -- node[below,xshift=-0mm] {$C_1(\tilde{\Sched})=5$} (5, -\m-0.1);
			\draw[-,thick] (9,-\m+0.1) -- node[below,xshift=0mm] {$C_2(\tilde{\Sched})=9$} (9, -\m-0.1);
			\draw[-,thick] (12,-\m+0.1) -- node[below] {$C_3(\tilde{\Sched})=12$} (12, -\m-0.1);
			
			\foreach \x in {1,3}{
				\node[job1] (1) at (\x,-1) {}; \node at (1.center) [lab] {$1$};
				\node[job1] (1) at (\x+1,-2) {}; \node at (1.center) [lab] {$1$};
			}
			\foreach \x in {5,7}{
				\node[job2] (2) at (\x,-1) {}; \node at (2.center) [lab] {$2$};
				\node[job2] (2) at (\x+1,-2) {};  \node at (2.center) [lab] {$2$};
			}
			\foreach \x in {0,2,4,6,8,10}{
				\node[job4] (4) at (\x,-1) {};\node at (4.center) [lab] {$3$};
				\node[job4] (4) at (\x+1,-2) {};\node at (4.center) [lab] {$3$};
			}
			
	\end{tikzpicture}}}
\end{figure*}
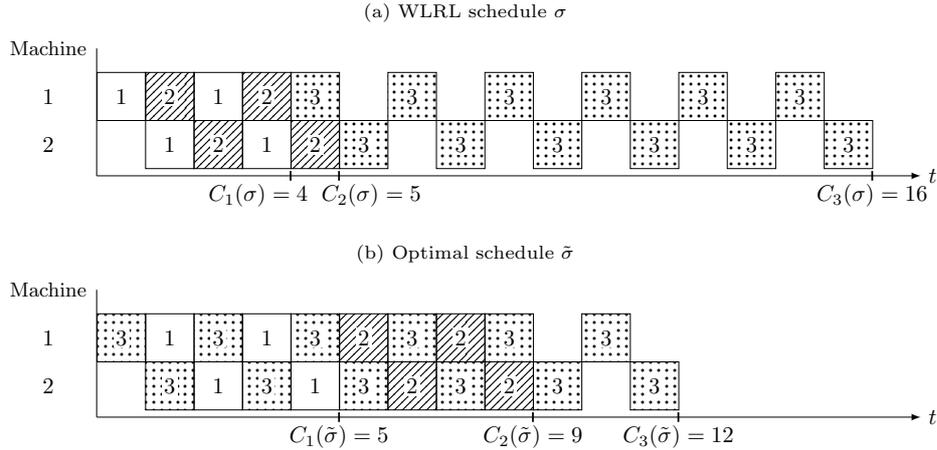

% -----WLRL Performance Guarantee -----

Although it is not guaranteed that the WLRL rule generates an optimal schedule, it seems to work reasonably well in practice. One can measure the performance of an algorithm by analyzing the factor by which the algorithm's solution is larger than the optimal solution. 
This factor is also called ``approximation ratio'' or ``performance ratio''. Let $\OFVV(ALG)$ and $\OFVV(OPT)$ describe the objective function values of the algorithm's solution and the optimal solution, respectively. 
The algorithm's performance ratio is $\frac{\OFVV(ALG)}{\OFVV(OPT)}$. 
In a numerical experiment involving 20,000 randomly generated instances with $4$ to $8$ jobs, $2$ to $6$ machines, $1$ to $20$ loops per job, and weights ranging from $1$ to $20$ (all sampled uniformly random), the average performance ratio of the WLRL rule was $1.01$, while the empirical worst-case performance ratio was $1.11$. 

Our goal in what follows is to derive and prove a theoretical worst-case bound on the performance ratio of the WLRL rule. 
For the parallel machine scheduling problem $P||\sum w_jC_j$, \cite{Kawaguchi1986} showed that the ``Weighted Shortest Processing Time First'' (WSPT) rule, which schedules the jobs following the $\frac{w_j}{p_j}$ priority rule~(see,~e.g.,~\citealp{Pinedo2022}), has a worst-case performance ratio of $\frac{1}{2}\big(1+\sqrt{2}\big)$. 
\cite{Schwiegelshohn2011} presented an alternative proof of this bound by analyzing the structure of worst-case instances. 
We show that for reentrant flow shops with unit processing times, the WLRL rule also has a worst-case performance ratio of $\frac{1}{2}\big(1+\sqrt{2}\big)$. 
We achieve our result in three steps.
First, we identify properties of a worst-case instance, similar to \cite{Schwiegelshohn2011}. 
For instances that satisfy these properties, we then create related instances of the $P||\sum w_jC_j$ problem in the second step. 
Finally, we use these instances to conclude our proof. 

% Add notations here
Throughout this section, we denote an instance of the reentrant flow shop problem as $I$. % and say that an instance has a WLRL order assigned to it. This is to ensure that there is a unique WLRL order, even if two jobs have the same ratio $w_j/\Lps_j$. 
The WLRL schedule and optimal schedule are denoted as $WLRL(I)$ and $OPT(I)$, respectively, or $WLRL$ and $OPT$ if the instance is clear from the context.
Furthermore, we denote the performance ratio of the WLRL rule as:

\begin{align*}
    \Rto(I) = \frac{\OFVV\big(WLRL(I)\big)}{\OFVV\big(OPT(I)\big)} = \frac{\sum w_jC_j\big(WLRL(I)\big)}{\sum w_jC_j\big(OPT(I)\big)}.
\end{align*} 

The first step of the proof is to identify two properties that potential worst-case instances must satisfy. 
The following lemma shows that in a worst-case instance, the weight of each job is equal to its number of loops. 

\begin{lemma}\label{lem:WeightEqualLoops}
	For every instance $I$, there is an instance $I'$ with $\Rto(I) \leq \Rto(I')$ and $w_j=\Lps_j$ for all jobs $j$.
\end{lemma}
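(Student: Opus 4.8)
The plan is to hold everything about the instance fixed except the weights, and to study $\Rto$ as a function of the weight vector $w=(w_1,\dots,w_n)$ while keeping the loop numbers $\Lps_j$ and the machine count $m$ fixed. First I would relabel the jobs so that $\frac{w_1}{\Lps_1}\ge\frac{w_2}{\Lps_2}\ge\cdots\ge\frac{w_n}{\Lps_n}$ and restrict attention to the polyhedral cone $R=\{w:\Lps_{j+1}w_j\ge\Lps_j w_{j+1},\ w_j\ge 0\}$, on which this ordering and hence the WLRL priority order is preserved. Writing $r_j=w_j/\Lps_j$, the cone $R$ becomes the monotone cone $r_1\ge\cdots\ge r_n\ge 0$.

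Two structural observations drive the argument. Since the WLRL schedule depends only on the priority order and not on the magnitudes of the weights, the completion times $C_j(WLRL)$ are constant on the relative interior of $R$, so the numerator $\OFVV\big(WLRL(I)\big)=\sum_j w_j C_j(WLRL)$ is a linear function of $w$. By Theorem~\ref{thm:NonInterruption} the optimal value may be computed over the fixed, weight-independent set of non-interruptive schedules, hence $\OFVV\big(OPT(I)\big)=\min_\sigma\sum_j w_j C_j(\sigma)$ is a pointwise minimum of linear functions and therefore concave in $w$. Because scaling all weights by a positive constant leaves the optimal schedule unchanged, $\Rto$ is homogeneous of degree zero, which lets me normalize freely and restrict to a compact cross-section of $R$.

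From here I would argue that $\Rto=N/D$, with $N$ linear and $D$ concave and positive, is quasiconvex: for every $\alpha>0$ the sublevel set $\{\Rto\le\alpha\}=\{N-\alpha D\le 0\}$ is convex, since $N-\alpha D$ is convex. A quasiconvex function attains its maximum over a polytope at a vertex, and the extreme rays of the monotone cone are $r^{(k)}=(\underbrace{1,\ldots,1}_{k},0,\ldots,0)$ for $k=1,\ldots,n$. At such a ray we have $w_j=\Lps_j$ for $j\le k$ and $w_j=0$ for $j>k$, so $\Rto(I)$ is bounded above by the value of $\Rto$ at one of these rays; at the ray $r^{(n)}$ the resulting objective is even order-independent by Lemma~\ref{lemma: Formula for the Objective}.

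It then remains to delete the zero-weight jobs. These jobs carry the lowest possible WLRL priority and, in an optimal schedule, are likewise best placed behind all positively weighted jobs; in both schedules they therefore contribute nothing to either objective and should not alter the completion times of jobs $1,\ldots,k$. Removing them yields the desired instance $I'$ on jobs $1,\ldots,k$ with $w_j=\Lps_j$ throughout and $\Rto(I)\le\Rto(I')$. I expect the main obstacle to be exactly this final step: making rigorous, within the multi-loop progression structure, that a lowest-priority job is never forced onto machine~$1$ ahead of an available positively weighted job, so that excising it leaves both the WLRL schedule and the optimal schedule of the remaining jobs intact. A secondary care point is confirming that $C_j(WLRL)$ is genuinely constant on $R$ (strict on its relative interior, extended by continuity to the extreme rays where ties appear), so that the linearity of $N$ used above is legitimate.
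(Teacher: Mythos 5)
Your argument is correct and follows essentially the same route as the paper: the paper's proof is the one-line instruction to replace $p_j$ by $\Lps_j$ in Corollary~1 of~\cite{Schwiegelshohn2011}, and what you have written is exactly a self-contained reconstruction of that corollary's argument (numerator linear and denominator concave in the weight vector on the cone where the priority order is fixed, hence a quasiconvex ratio maximized at the extreme rays $w_j=\Lps_j$ on a prefix, followed by excision of the zero-weight jobs). The problem-specific step you rightly flag as the main care point does go through here: because every loop occupies machine~$1$ for exactly one unit, a positive-weight loop that becomes available at an integer time $t$ is always preferred at $t$ over a zero-ratio job and is therefore never delayed by one, so deleting the zero-weight jobs leaves both the WLRL completion times and the optimal value of the remaining jobs unchanged.
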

\begin{proof}%(See~\citealp{Schwiegelshohn2011})
	Replace $p_j$ by $\Lps_{j}$ in Corollary~1 of~\cite{Schwiegelshohn2011}. 
\end{proof}

If $w_j = \Lps_j$ for all jobs~$j$, any list schedule can be generated according to the WLRL rule by using appropriate tie-breakers. 
In fact, we can create any list schedule via the WLRL rule by adding or subtracting arbitrarily small $\delta$ values to the weights.
The influence of such perturbations on the performance ratio is negligible.
Therefore, we can focus our analysis on a worst-case and an optimal order. 

The following lemma shows that a potential worst-case instance has some ``small'' jobs with one loop and at most $m-1$ ``big'' jobs with two or more loops. 
Let $T_{idl}(\Sched)$ denote the first idle time on machine~$1$ under schedule~$\Sched$. 
The idea is to create another instance with a higher performance ratio that splits off one loop of a job that currently completes earlier than the first idle time on machine 1 in the worst-case schedule. 

\begin{lemma}\label{lem:SmallAndBigJobs}
	For every instance $I$, there is an instance $I'$ with $\Rto(I)\leq \Rto(I')$, $w_j = \Lps_j$ for all jobs~$j$, and $\Lps_j=1$ if $S_j < T_{idl}\big(WLRL(I')\big)-m$. 
\end{lemma}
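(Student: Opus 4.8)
The plan is to start from the reductions already available: by Lemma~\ref{lem:WeightEqualLoops} I may assume $w_j=\Lps_j$ for every job, so that every WLRL ratio equals $1$ and, after the infinitesimal weight perturbations already described, the WLRL rule can realize any list order. I therefore treat $WLRL(I)$ as the worst-case list schedule and $OPT(I)$ as a non-interruptive optimal schedule, both free of unforced idleness; let $T_{idl}=T_{idl}\big(WLRL(I)\big)$ be the first idle time of the WLRL schedule, before which machine~$1$ is continuously busy. The goal is to remove every \emph{early} multi-loop job, i.e.\ every job with $\Lps_j\geq 2$ whose first loop starts at $S_j<T_{idl}-m$, by a loop-splitting operation that does not decrease $\Rto$.

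The key operation is the following \emph{split}. I pick an early multi-loop job $j$ with $\Lps_j=w_j=L\geq 2$ and $S_j<T_{idl}-m$, and replace it by a single-loop job $a$ of weight $1$ together with a job $b$ having $L-1$ loops and weight $L-1$; this conserves both the total weight and the total number of loops. I apply this split to $WLRL(I)$ and to $OPT(I)$ by leaving every loop in its physical position, letting $a$ inherit the slot of the first loop of $j$ and $b$ inherit the slots of loops $2,\dots,L$. A one-line computation shows that, in any non-interruptive schedule, detaching the first loop of a job whose loops sit at $q,q+m,\dots,q+(L-1)m$ decreases the weighted completion time by exactly $(q+Lm)-(q+m)=(L-1)m$: we lose one unit of weight at the completion $q+Lm$ of the shortened job and gain one unit at the completion $q+m$ of the detached loop. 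Hence the split of $OPT(I)$ is feasible for the new instance $I'$ and yields $\OFVV\big(OPT(I')\big)\leq \OFVV\big(OPT(I)\big)-(L-1)m$.

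For the numerator I must argue that the analogous split of $WLRL(I)$ is itself a valid WLRL schedule for $I'$, giving $\OFVV\big(WLRL(I')\big)\geq \OFVV\big(WLRL(I)\big)-(L-1)m$. This is precisely the step where the threshold $T_{idl}-m$ is needed, and I expect it to be the main obstacle. Splitting makes the first loop of $b$ (formerly the second loop of $j$) precedence-free, so in principle it could be pulled forward into an idle slot, turning the copied schedule into one with unforced idleness and hence not producible by WLRL. The condition $S_j<T_{idl}-m$ rules this out: the first loop of $b$ occupies time $S_j+m<T_{idl}$, which lies in the region where machine~$1$ is continuously busy, so no earlier slot is free and the copied schedule introduces no unforced idleness. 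It therefore remains a list schedule realizable by WLRL, and the numerator bound follows.

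It then remains to combine the two bounds. Writing $N=\OFVV\big(WLRL(I)\big)$, $D=\OFVV\big(OPT(I)\big)$ and $\Delta=(L-1)m$, the estimates above give $\Rto(I')\geq \frac{N-\Delta}{D-\Delta}$, and since $\frac{N-\Delta}{D-\Delta}\geq \frac{N}{D}$ is equivalent to $N\geq D$, which holds because $WLRL(I)$ is feasible while $OPT(I)$ is optimal, I conclude $\Rto(I')\geq \Rto(I)$. Finally I iterate: each split turns the detached loop into an admissible early single-loop job and pushes the start of the remaining job to $S_j+m$, while keeping the physical schedule and hence $T_{idl}$ unchanged; consequently the total number of loops belonging to early multi-loop jobs strictly decreases. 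As this quantity is a nonnegative integer, after finitely many splits no early job has more than one loop, and the resulting instance $I'$ satisfies $w_j=\Lps_j$ for all $j$, $\Lps_j=1$ whenever $S_j<T_{idl}\big(WLRL(I')\big)-m$, and $\Rto(I)\leq\Rto(I')$, as claimed.
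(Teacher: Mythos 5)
Your proposal is correct and follows essentially the same route as the paper's proof: split an early multi-loop job into a unit-loop job and a remainder, observe that the objective of both the WLRL and the optimal schedule drops by exactly $m(\Lps_{j_0}-1)$, use $S_{j_0}+m<T_{idl}$ to keep the split WLRL schedule realizable, and conclude via $\frac{N-\Delta}{D-\Delta}\geq\frac{N}{D}$ before iterating. Your explicit termination argument (the loop count of early multi-loop jobs strictly decreases while the physical schedule, and hence $T_{idl}$, is unchanged) is a welcome bit of extra care that the paper leaves implicit.
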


\begin{proof}
	By Lemma~\ref{lem:WeightEqualLoops}, we can assume that $w_j = \Lps_j$ for all jobs~$j$ in $I$.
    Let $\Sched(I)$ be an arbitrary schedule and let $j_0$ be a job from instance~$I$ with $\Lps_{j_0} > 1$ starting before time~$T_{idl}\big(WLRL(I)\big)-m$ in $\Sched(I)$. We split this job into two smaller jobs $j_1$ and $j_2$ with $\Lps_{j_1}=w_{j_1} = 1$ and $\Lps_{j_2}=w_{j_2}=\Lps_{j_0}-1$ to produce a new instance $I'$.
    We extend $\Sched(I)$ to $\Sched(I')$ by scheduling $S_{j_1}\big(\Sched(I')\big)=S_{j_0}\big(\Sched(I)\big)$, $C_{j_1}\big(\Sched(I')\big) = S_{j_2}\big(\Sched(I')\big)=S_{j_0}+m$, which implies $C_{j_2}\big(\Sched(I')\big) = C_{j_0}\big(\Sched(I)\big)$. 
    This operation clearly does not affect the completion time of any other job and, by the assumption $S_{j_0} + m <T_{idl}\big(WLRL(I')\big)$, also results in a feasible WLRL schedule.
    
	Such a split (strictly) decreases the total weighted completion time by:
	\begin{multline*}
		\Lps_{j_0}C_{j_0} - (S_{j_0}+m) - (\Lps_{j_0}-1)C_{j_0} \\
        = C_{j_0} - S_{j_0} - m = m \left( \Lps_{j_0} - 1 \right ) > 0.
	\end{multline*}
	It follows that:
	\begin{align*}
		1 \leq \Rto(I)  & \leq \frac{\sum w_jC_j\big(WLRL(I)\big) - m \left( \Lps_{j_0} - 1 \right )}{\sum w_jC_j\big(OPT(I)\big) - m \left( \Lps_{j_0} - 1 \right )} \\
        & \leq \frac{\sum w_jC_j\big(WLRL(I')\big)}{\sum w_jC_j\big(OPT(I')\big)} = \Rto(I').
	\end{align*}
    Iteratively, splitting jobs yields an instance $I'$ with the claimed properties.
	%
	%The last inequality holds, since in an optimal schedule, the two jobs may be arranged differently in order to achieve an even lower objective function value.
\end{proof}

Using this lemma, we can focus on instances that contain $x$ small jobs with one loop and $B<m$ big jobs with $2 \leq y_0 \leq \ldots \leq y_{B-1}$ loops, all satisfying $w_j = \Lps_j$. 
One can show, using an interchange argument, that an optimal order for such an instance is generated by breaking ties according to the highest weight. That is, starting the job with the most loops at time $0$, the job with the second most loops at time $1$, and so on. 
Conversely, one can show that a worst-case order can be generated by breaking ties according to the smallest weight. That is, starting the $x$ small jobs at times $0,1,\ldots,x-1$ and the big jobs at times $x,x+1,\ldots,x+B-1$. 

The second step of the proof is to transform a potential worst-case instance $I$ into a corresponding instance $\bar{I}$ of the $P||\sum w_jC_j$ problem. 
We create such an instance that contains all jobs of instance $I$ (``real'' jobs) plus some ``dummy'' jobs. 
Then, we split a subset of the real jobs of this instance to offset the total weighted completion time of the dummy jobs.  

\begin{definition}
	For an instance $I$ as described above, we create an instance $\bar{I}$ of the $P||\sum w_jC_j$ problem as follows: $\bar{p}_j \coloneqq m\Lps_{j}, \ \bar{w}_j \coloneqq m\Lps_{j},  \ j=1,\ldots,x+B$. 
	Additionally, we define $m-1$ dummy jobs with $\bar{p}_{x+B+i} \coloneqq i, \ \bar{w}_{x+B+i} \coloneqq  i, \ i = 1,\ldots,m-1$. 
\end{definition}

As $\bar{w}_j = \bar{p}_j $ holds for all jobs in instance $\bar{I}$, any list schedule can be seen as a WSPT schedule; we can achieve a unique WSPT order by modifying the weights by very small values $\delta$. 
%Additionally, as the dummy jobs have arbitrarily small weights, we can neglect them in the calculation of the total weighted completion time, i.e., for any schedule of instance $\bar{I}$, $\sum_{j=1,\ldots,x+B+\Dmy} \bar{w}_j \bar{C}_j = \sum_{j=1,\ldots,x+B} \bar{w}_j \bar{C}_j$.

Next, we show that for a convenient ordering, the ``real'' jobs of instance $\bar{I}$ have the same completion times as in a worst-case schedule of instance $I$.  

\begin{lemma}\label{lem:CompletionTime_I_bar}
	There is a WSPT ordering of the jobs of instance $\bar{I}$ such that the completion times of the real jobs are equal to the completion times of a worst-case WLRL schedule of instance $I$.
\end{lemma}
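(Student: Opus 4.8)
The plan is to make the informal correspondence ``progression $\leftrightarrow$ parallel machine'' precise and to show that the dummy jobs account exactly for the time offsets between the progressions. First I would record the structure of a worst-case WLRL schedule of $I$ that was described above: it is non-interruptive with $w_j=\Lps_j$, it decomposes into the $m$ progressions $\Pg_0,\ldots,\Pg_{m-1}$, and every loop starting at a time of the form $t+rm$ belongs to $\Pg_t$. Consequently, a job $j$ that is the $r$-th job of $\Pg_t$, preceded by jobs with loop counts $\Lps^{(1)},\ldots,\Lps^{(r-1)}$, starts its first loop at $S_j=t+m\sum_{i<r}\Lps^{(i)}$ and therefore completes at
\[
C_j = S_j + m\Lps_j = t + m\sum_{i\le r}\Lps^{(i)} .
\]

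Next I would define the target schedule of $\bar I$ on $m$ parallel machines by identifying machine $t$ with progression $\Pg_t$: on machine $t$ I place the dummy job of processing time $t$ first (machine $0$ receives no dummy, since its offset is $0$), followed by the real jobs of $\Pg_t$ in the same order as in the flow shop. Because the dummy occupies the interval $[0,t]$, the $r$-th real job on machine $t$, which has $\bar p_j=m\Lps_j$, completes at $t+\sum_{i\le r} m\Lps^{(i)} = t + m\sum_{i\le r}\Lps^{(i)}$, i.e.\ exactly at the value $C_j$ computed above. Thus the real jobs inherit precisely the completion times of the worst-case WLRL schedule, which is the assertion of the lemma, and the role of each dummy job is simply to reproduce the offset $t$ of its progression.

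It then remains to argue that this target schedule is genuinely a WSPT schedule. Since $\bar w_j=\bar p_j$ for every job, all Smith ratios equal $1$, so any list schedule is a WSPT schedule and, after perturbing the weights by infinitesimal $\delta$ values, the WSPT order becomes unique. I would therefore exhibit a list whose list-scheduling output coincides with the target: scheduling the $m-1$ dummies first keeps machines $1,\ldots,m-1$ busy until times $1,\ldots,m-1$ while machine $0$ stays free; feeding in the single-loop real jobs (each of size $m$) next makes the greedy rule assign them round-robin to machines $0,1,\ldots,m-1$, reproducing the balanced distribution of single loops over the progressions; and finally listing the big jobs in increasing order of loop count makes greedy place them on the machines that become free earliest, namely machines $d,d+1,\ldots$ with $d=x\bmod m$, matching the progressions $x,x+1,\ldots \pmod m$ that receive the big jobs in the flow shop.

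The step I expect to be the main obstacle is precisely this last verification: confirming that WSPT list scheduling on $\bar I$ reproduces the exact assignment of jobs to progressions of the worst-case WLRL schedule, in particular that the big jobs land on the intended machines (including the wrap-around case $x\bmod m + B>m$) rather than being rerouted by the greedy ``earliest-free-machine'' tie-breaking. Concretely, one must check that the machines carrying the big jobs are always the earliest-available ones, which amounts to comparing free times $t+cm$ against $t'+(c+1)m$ for the two classes of machines and using $B<m$; once this assignment is pinned down, the matching of completion times via the dummy offsets is routine bookkeeping.
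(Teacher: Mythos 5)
Your proposal is correct and follows essentially the same route as the paper's proof: assign the dummy job of length $t$ to the machine corresponding to progression $\Pg_t$ so that the real jobs inherit the offsets $0,1,\ldots,m-1$, observe that the completion times then coincide with those of the worst-case WLRL schedule, and use the fact that all Smith ratios equal $1$ (up to infinitesimal perturbation) to certify the result as a WSPT schedule. The final verification you flag as the main obstacle --- that greedy earliest-free-machine dispatch actually reproduces the intended assignment of big jobs to progressions --- is exactly the step the paper compresses into ``one can easily verify,'' and your sketch of comparing free times $t+cm$ using $B<m$ resolves it correctly.
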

\begin{proof}
	We schedule the jobs as follows: 
	schedule real job~$1$ at time~$0$ on machine~$1$. 
	Schedule the dummy jobs such that dummy job $x+B+i-1$ with length $i-1$ is scheduled on machine~$i$, $i = 2,\ldots, m$. 
	Schedule real job~$2$ at time~$1$ on machine~$2$. 
	Schedule real job~$3$ at time~$2$ on machine~$3$ and so on, see Figure~\ref{fig:Schematic_Display_I_bar}. 
	%As $\Dmy = \frac{m(m-1)}{2}$, the last dummy job will be scheduled on machine $m$ to complete at time $m-1$. 
	%After all dummy jobs have been scheduled, real jobs $j = m+1,\ldots, n$ can be scheduled at times $m,m+1,\ldots$. 
%	As $p_j\geq m$ holds for all real jobs, the resulting schedule is a list schedule. 
%	A schematic display of the schedule can be found in 
	%
	One can easily verify that the completion times of this schedule are equal to those of the worst-case schedule of instance~$I$.
\end{proof}

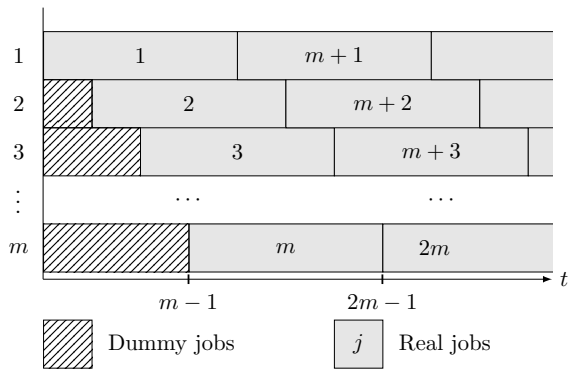
\begin{figure}
	\centering
	\caption{Schematic Gantt chart for the worst-case WSPT schedule in the proof of Lemma~\ref{lem:CompletionTime_I_bar}}\label{fig:Schematic_Display_I_bar}
		\scalebox{0.85}{
			\begin{tikzpicture}[scale=0.75]
				\tikzset{job_m/.style={rectangle,draw,anchor=west,minimum height=0.75cm,minimum width=4*.75cm}}			
				\tikzset{job_d/.style={rectangle,draw,anchor=west,minimum height=0.75cm}}			
				\pgfmathsetmacro{\m}{5.65}

				\draw[-] (0,0) -- (0, -\m);
				\draw[-latex] (0, -\m) -- node[pos=1,right] {$t$} (10.5, -\m);
				\foreach \x in {1,2,3}{
					\node (Mone) at (-0.5,-\x) {$\x$};
				}
				\node[rotate=90] (Mone) at (-0.5,-4) {$\cdots$};
				\node (Mone) at (-0.5,-5) {$m$};
																
				\foreach \x in {1,2,3}{
					\node[job_m,fill=cj1] (41) at (\x-1,-\x) {$\x$};
				}
				\node[rectangle,anchor=west,minimum height=0.75cm,minimum width=0.75] (Mone) at (2.55,-4) {$\cdots$};
				\node[job_m,fill=cj1] (41) at (4-1,-5) {$m$};
				
				\begin{scope}
                    \clip (0,0) rectangle (10.5,-6.2); 
                    \foreach \x in {1,2,3}{
                        \node[job_m,fill=cj1] (41) at (4+\x-1,-\x) {$m+\x$};
                    }
                    \node (Mone) at (5+4-.75,-4) {$\cdots$};
                    \node[job_m,fill=cj1] (41) at (4+4-1,-5) {$2m\qquad\qquad$};
                    
                    \foreach \x in {1,2,3}{
                        \node[job_m,fill=cj1] (41) at (8+\x-1,-\x) {};
                    }
				\end{scope}
				
				\foreach \x in {1,2}{
					\node[minimum width=\x*0.75cm, job_d,pattern=north east lines] (41) at (0,-\x-1) {};
				}

				\node[minimum width=3*0.75cm, job_d,pattern=north east lines] (41) at (0,-5) {};
								
				%Legende:
				
				\node[minimum width=0.75cm,job_d,pattern=north east lines] (leg_d) at (0, -7) {};
				\node[right of = leg_d,anchor=west,xshift=-0.5cm] (leg_d_txt)  {Dummy jobs};
				
				\node[rectangle,draw,anchor=west,minimum height=0.75cm,minimum width=0.75cm, fill=cj1] (leg_r) at (6, -7) {$j$};
				\node[right of = leg_r,anchor=west,xshift=-0.5cm] (leg_r_txt)  {Real jobs};
				
				\node (a) at (1,0.5) {};
				
   			\draw[-,thick] (3,-\m+0.1) -- node[below,yshift=-1mm] {$m-1$} (3, -\m-0.1);
			\draw[-,thick] (7,-\m+0.1) -- node[below,yshift=-1mm] {$2m-1$} (7, -\m-0.1);
				
	\end{tikzpicture}}
	
\end{figure}

\begin{corollary}\label{cor:CompletionTime_I_bar_OPT}
	There is a WSPT ordering of the jobs of instance $\bar{I}$ such that the completion times of the real jobs are equal to those of an optimal schedule of instance $I$. 
\end{corollary}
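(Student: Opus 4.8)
The plan is to mirror the construction used in the proof of Lemma~\ref{lem:CompletionTime_I_bar}, replacing the worst-case WLRL schedule of $I$ with an optimal schedule of $I$. The crucial observation is that the definition of $\bar{I}$ — in particular the placement of the $m-1$ dummy jobs — does not depend on which schedule of $I$ we wish to reproduce; only the assignment and ordering of the \emph{real} jobs on the machines changes. Since the paper has already established that any list schedule of $\bar{I}$ is a WSPT schedule (after an arbitrarily small perturbation of the weights), it suffices to exhibit one list schedule of $\bar{I}$ whose real jobs attain the optimal completion times of $I$.

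First I would fix an optimal schedule of $I$. By Theorem~\ref{thm:NonInterruption} we may take it to be non-interruptive, and since an optimal schedule contains no unforced idleness on machine~$1$, its loops partition into the progressions $\Pg_1,\ldots,\Pg_m$ starting at times $0,1,\ldots,m-1$; moreover, in a non-interruptive schedule all loops of a job lie in a single progression. As in the proof of Lemma~\ref{lemma: Formula for the Objective}, a job~$j$ assigned to progression $\Pg_i$ then has completion time
\begin{align*}
    C_j = (i-1) + m\sum_{k} \Lps_k,
\end{align*}
where the sum runs over the jobs $k$ preceding $j$ in $\Pg_i$ together with $j$ itself.

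Next I would build the matching list schedule of $\bar{I}$: place the dummy job of length $i-1$ at the front of machine~$i$ for $i=2,\ldots,m$ (machine~$1$ receives no dummy), and then schedule on machine~$i$ exactly the real jobs whose originals lie in $\Pg_i$, keeping the within-progression order. Because the real job corresponding to job~$j$ has processing time $m\Lps_j$, its completion on machine~$i$ equals $(i-1)+m\sum_{k}\Lps_k$, which is precisely $C_j$ in the optimal schedule of $I$. Hence the real jobs of $\bar{I}$ attain exactly the optimal completion times of $I$, and since the schedule we built is a list schedule, it is a WSPT schedule, which proves the claim.

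The step requiring the most care is verifying that the optimal schedule of $I$ really decomposes into the $m$ progressions starting at times $0,\ldots,m-1$ with no idle time, so that the \emph{fixed} dummy offsets $i-1$ align exactly with the progression start times. This is where the structural facts are needed: non-interruptiveness (Theorem~\ref{thm:NonInterruption}), the absence of unforced idleness in an optimal schedule, and the characterization of the optimal order by decreasing number of loops. (If fewer than $m$ progressions are actually used, the corresponding machines in $\bar{I}$ simply carry a dummy and no real job, which does not affect the matching.) Once this alignment is established, the equality of completion times is the same bookkeeping as in Lemma~\ref{lem:CompletionTime_I_bar}.
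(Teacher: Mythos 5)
Your proposal is correct and takes essentially the same approach as the paper: the paper's one-line proof ("reverse the order of the real jobs in the proof of Lemma~\ref{lem:CompletionTime_I_bar}") is exactly your construction, since for these structured instances the optimal order is the reversal of the worst-case order, and the dummy placement is unchanged. Your additional verification via progressions and non-interruptiveness is sound but already covered by the paper's remarks preceding the corollary.
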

\begin{proof}
	Reverse the order of the real jobs in the proof of Lemma~\ref{lem:CompletionTime_I_bar}.
%	Follows from Lemma~\ref{lem:CompletionTime_I_bar} by reversing the order of the real jobs.
\end{proof}

In what follows, we aim to offset the weighted completion time of the dummy jobs by splitting some real jobs. 
One can easily verify that for an ordering as described in Lemma~\ref{lem:CompletionTime_I_bar}, the weighted completion time of the dummy jobs is $\Delta \coloneqq \sum_{j=x+B+1}^{x+B+m-1}\bar{w}_jC_j\big(WSPT(\bar{I})\big) = \sum_{i=1}^{m-1} i^2 = \frac{m(m-1)(2m-1)}{6}$. 
To offset this increase in the total weighted completion time, we can split up some real jobs with processing times $m$. 
Specifically, we split a job $j_0$ with $\bar{w}_{j_0}=\bar{p}_{j_0}$ into two jobs $j_1$ and $j_2$ with $\bar{w}_{j_1} = \bar{p}_{j_1}, \ \bar{w}_{j_2} = \bar{p}_{j_2}$ and $\bar{p}_{j_1}+\bar{p}_{j_2} = \bar{p}_{j_0}$, see Figure~\ref{fig:Schematic_Display_jobSplitting}. 
Such a split reduces the total weighted completion time of a schedule by $\bar{p}_{j_1}\bar{p}_{j_2}$ \citep{Schwiegelshohn2011}. 

Similarly to Lemma~\ref{lem:SmallAndBigJobs}, this procedure can be repeated until all ``split'' jobs starting before time~$m-1$ have a processing time of 1. Such an instance is illustrated in Figure~\ref{fig:Schematic_Display_afterSplitting}. 
One can easily verify that these splits result in a reduction in the total weighted completion time of $\sum_{i=1}^{m-1} i^2 = \Delta$.
We denote the instance in which the jobs have been split up as $\bar{I}'$.

\begin{figure}
	\centering
	\caption{Schematic display of job splitting}\label{fig:Schematic_Display_jobSplitting}
	\subfloat[Original job]{
	\scalebox{0.85}{
		\begin{tikzpicture}[scale=0.75]
			\tikzset{job_m/.style={rectangle,draw,anchor=west,minimum height=0.75cm,minimum width=5*.75cm}}			
			\tikzset{job_mS/.style={rectangle,draw,anchor=west,minimum height=0.75cm,minimum width=2.5*.75cm}}			
			\tikzset{job_d/.style={rectangle,draw,anchor=west,minimum height=0.75cm}}			
			\pgfmathsetmacro{\m}{1.65}
			
			\draw[-latex] (0, -\m) -- node[pos=1,right] {$t$} (6.5, -\m);
		
			\node[job_m,fill=cj1] (41) at (0.5,-1) {$j_0$};
			\draw[-,thick] (5.5,-\m+0.1) -- node[below,yshift=-1mm] {$C_{j_0}$} (5.5, -\m-0.1);			
	\end{tikzpicture}}}
 
	\subfloat[Split job]{
		\scalebox{0.85}{
			\begin{tikzpicture}[scale=0.75]
				\tikzset{job_m/.style={rectangle,draw,anchor=west,minimum height=0.75cm,minimum width=5*.75cm}}			
				\tikzset{job_mS/.style={rectangle,draw,anchor=west,minimum height=0.75cm,minimum width=2*.75cm}}		
                \tikzset{job_mSS/.style={rectangle,draw,anchor=west,minimum height=0.75cm,minimum width=3*.75cm}}			
				\tikzset{job_d/.style={rectangle,draw,anchor=west,minimum height=0.75cm}}			
				\pgfmathsetmacro{\m}{1.65}
				
				\draw[-latex] (0, -\m) -- node[pos=1,right] {$t$} (6.5, -\m);
				
				\node[job_mS,fill=cj1] (41) at (0.5,-1) {$j_1$};
				\node[job_mSS,fill=cj1] (41) at (2.5,-1) {$j_2$};
				\draw[-,thick] (2.5,-\m+0.1) -- node[below,yshift=-1mm] {$C_{j_1}$} (2.5, -\m-0.1);
				\draw[-,thick] (5.5,-\m+0.1) -- node[below,yshift=-1mm] {$C_{j_2}$} (5.5, -\m-0.1);
				
		\end{tikzpicture}}
	}
\end{figure}
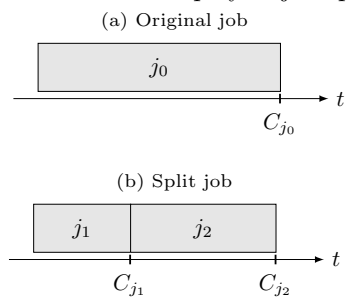

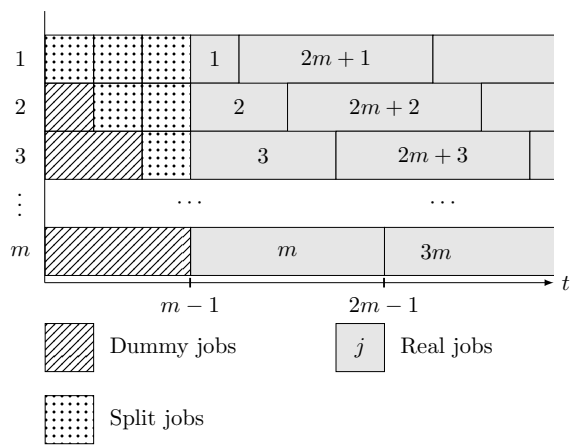
\begin{figure}
	\centering
	\caption{Schematic Gantt chart for the worst-case WSPT schedule after job splitting}\label{fig:Schematic_Display_afterSplitting}
	\scalebox{0.85}{
		\begin{tikzpicture}[scale=0.75]
			\tikzset{job_m/.style={rectangle,draw,anchor=west,minimum height=0.75cm,minimum width=4*.75cm}}			
			\tikzset{job_mS/.style={rectangle,draw,anchor=west,minimum height=0.75cm,minimum width=.75cm, pattern=dots}}			
			\tikzset{job_d/.style={rectangle,draw,anchor=west,minimum height=0.75cm}}			
			\pgfmathsetmacro{\m}{5.65}
			
			\draw[-] (0,0) -- (0, -\m);
			\draw[-latex] (0, -\m) -- node[pos=1,right] {$t$} (10.5, -\m);

 			\foreach \x in {1,2,3}{
 				\node (Mone) at (-0.5,-\x) {$\x$};
 			}
 			\node[rotate=90] (Mone) at (-0.5,-4) {$\cdots$};
 			\node (Mone) at (-0.5,-5) {$m$};
   
			\foreach \x in {1,2}{
				\node[job_mS] (41) at (\x-1,-\x) {};
			}
            \foreach \x in {1}{
				\node[job_mS] (41) at (\x,-\x) {};
			}

            \foreach \x in {1,2,3}{
                \node[job_mS] (41) at (2,-\x) {};
            }
           
			\foreach \x in {1,2,3}{
				\node[job_mS,fill=cj1,minimum width=\x*0.75cm] (41) at (3,-\x) {$\x$};
			}
            \node[job_m,fill=cj1] (41) at (4-1,-5) {$m$};

            \node[rectangle,anchor=west,minimum height=0.75cm,minimum width=0.75] (Mone) at (2.55,-4) {$\cdots$};
			
			\begin{scope}
					\clip (0,0) rectangle (10.5,-6.2); 
					\foreach \x in {1,2,3}{
						\node[job_m,fill=cj1] (41) at (4+\x-1,-\x) {$2m+\x$};
					}
					\node (Mone) at (5+4-.75,-4) {$\cdots$};
					\node[job_m,fill=cj1] (41) at (4+4-1,-5) {$3m\qquad\qquad$};
					
					\foreach \x in {1,2,3}{
						\node[job_m,fill=cj1] (41) at (8+\x-1,-\x) {};
					}
			\end{scope}

			\foreach \x in {1,2}{
					\node[minimum width=\x*0.75cm, job_d,pattern=north east lines] (41) at (0,-\x-1) {};
            }

            \node[minimum width=3*0.75cm, job_d,pattern=north east lines] (41) at (0,-5) {};
			
			%Legende:
			
			\node[minimum width=0.75cm,job_d,pattern=north east lines] (leg_d) at (0,-7) {};
			\node[right of = leg_d,anchor=west,xshift=-0.5cm] (leg_d_txt)  {Dummy jobs};

            \node[job_mS] (leg_s) at (0, -8.5) {};
			\node[right of = leg_s,anchor=west,xshift=-0.5cm] (leg_s_txt)  {Split jobs};
			
			\node[rectangle,draw,anchor=west,minimum height=0.75cm,minimum width=0.75cm, fill=cj1] (leg_r) at (6, -7) {$j$};
			\node[right of = leg_r,anchor=west,xshift=-0.5cm] (leg_r_txt)  {Real jobs};
			
			\node (a) at (1,0.5) {};
			
			% \draw[-,thick] (2.5,-\m+0.1) -- node[below,yshift=-1mm] {$\frac{m}{2}$} (2.5, -\m-0.1);
   			\draw[-,thick] (3,-\m+0.1) -- node[below,yshift=-1mm] {$m-1$} (3, -\m-0.1);
			\draw[-,thick] (7,-\m+0.1) -- node[below,yshift=-1mm] {$2m-1$} (7, -\m-0.1);
			
	\end{tikzpicture}}
	
\end{figure}

The third step of the proof is to show the worst-case performance ratio of WLRL using instances $\bar{I}, \bar{I}'$, and the worst-case performance ratio $\bar{\Rto}$ of the WSPT rule. 

\begin{theorem}
	The WLRL rule has a worst-case performance ratio of $\frac{1}{2}\big(1+\sqrt{2}\big)$.
\end{theorem}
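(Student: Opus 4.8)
The plan is to reduce the worst-case analysis of WLRL on the reentrant flow shop to the known worst-case bound $\bar{\Rto}=\tfrac{1}{2}(1+\sqrt{2})$ of WSPT on $P||\sum w_jC_j$, using the auxiliary instances $\bar{I}$ and $\bar{I}'$ constructed above as the bridge. By Lemmas~\ref{lem:WeightEqualLoops} and~\ref{lem:SmallAndBigJobs} it suffices to bound $\Rto(I)$ for an instance $I$ with $w_j=\Lps_j$ consisting of $x$ one-loop jobs and $B<m$ big jobs, so I fix such an instance.

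First I would record the two objective identities that the construction buys us. By Lemma~\ref{lem:CompletionTime_I_bar}, the chosen WSPT ordering of $\bar{I}$ reproduces the worst-case WLRL completion times on the real jobs, whose weights are scaled by~$m$; hence that schedule's objective splits as $m\,\OFVV(WLRL(I))+\Delta$, where $\Delta=\sum_{i=1}^{m-1}i^2$ is the fixed contribution of the dummy jobs. By Corollary~\ref{cor:CompletionTime_I_bar_OPT}, the reversed ordering reproduces the optimal completion times, giving a feasible schedule of $\bar{I}$ of value $m\,\OFVV(OPT(I))+\Delta$.

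The crux is to make the artificial term $\Delta$ cancel, which is exactly what the job splitting accomplishes. Passing from $\bar{I}$ to $\bar{I}'$ splits certain real jobs and lowers the total weighted completion time by precisely $\Delta$; since the reduction caused by splitting a job into consecutive pieces equals $\bar{p}_{j_1}\bar{p}_{j_2}$ regardless of the job's position, the same collection of splits lowers both the worst-case-matching and the optimal-matching schedules by the same amount $\Delta$. Consequently the worst-case WSPT schedule of $\bar{I}'$ has value exactly $m\,\OFVV(WLRL(I))$, while placing the split pieces consecutively inside the reversed ordering yields a feasible schedule of $\bar{I}'$ of value $m\,\OFVV(OPT(I))$, so that $\OFVV(OPT(\bar{I}'))\le m\,\OFVV(OPT(I))$.

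Finally I would invoke the Kawaguchi--Kyan bound. Since $\bar{w}_j=\bar{p}_j$ for every job of $\bar{I}'$, any list ordering is a WSPT schedule (made unique, if desired, by perturbing weights by arbitrarily small $\delta$), so by~\cite{Kawaguchi1986} the worst-case-matching schedule has value at most $\bar{\Rto}$ times the optimum of $\bar{I}'$. Chaining the three relations gives
\[
  m\,\OFVV\big(WLRL(I)\big)
  \le \bar{\Rto}\,\OFVV\big(OPT(\bar{I}')\big)
  \le \bar{\Rto}\,m\,\OFVV\big(OPT(I)\big),
\]
and dividing by $m\,\OFVV(OPT(I))$ yields $\Rto(I)\le\bar{\Rto}=\tfrac{1}{2}(1+\sqrt{2})$. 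I expect the main obstacle to be the middle step: verifying that the $\Delta$ introduced by the dummy jobs cancels identically on both the worst-case and the optimal side. This hinges on the position independence of the splitting reduction together with the fact that the same split jobs can be re-placed consecutively inside the reversed (optimal) ordering; tightness of the bound then follows by translating the Kawaguchi--Kyan worst-case family back into reentrant instances.
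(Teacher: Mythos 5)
Your proposal is correct and follows essentially the same route as the paper: reduce to a structured worst-case instance via Lemmas~\ref{lem:WeightEqualLoops} and~\ref{lem:SmallAndBigJobs}, transfer to the parallel-machine instances $\bar{I}$ and $\bar{I}'$ so that the dummy-job contribution $\Delta$ is cancelled by the position-independent job splits, and invoke the Kawaguchi--Kyan bound for WSPT. The paper writes the final step as a single chain of ratio inequalities rather than your product chain, but the content is identical, including the observation that $\OFVV\big(OPT(\bar{I}')\big)$ is only upper-bounded by the reversed-order schedule's value.
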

\begin{proof}
	Consider a potential worst-case instance $I = (x,y_0,\ldots,y_{B-1})$ and the corresponding instances $\bar{I}$ and $\bar{I}'$. 
	Let $C_j\big(WSPT(\bar{I})\big)$ denote the completion times achieved when using the ordering of Lemma~\ref{lem:CompletionTime_I_bar} and let $C_j\big(OPT(\bar{I})\big)$ denote the completion times achieved when using the ordering of Corollary~\ref{cor:CompletionTime_I_bar_OPT}. 
	Note that the true optimal solution to $\bar{I}$ may have an even lower total weighted completion time. The following inequalities hold: 

    %% Prevent align from breaking
    %\filbreak
	\begin{align*}
		\frac{1}{2}\big(1+\sqrt{2}\big) \geq&
        \bar{\Rto}(\bar{I}') \\
        \geq&  \frac{\sum_{j=1}^{x+B+m-1} \bar{w}_jC_j\big(WSPT(\bar{I})\big) -\Delta}{\sum_{j=1}^{x+B+m-1} \bar{w}_jC_j\big(OPT(\bar{I})\big)-\Delta} \\[0.25em]
		= &\frac{\sum_{j=1}^{x+B} \bar{w}_jC_j\big(WSPT(\bar{I})\big) + \Delta-\Delta}{\sum_{j=1}^{x+B} \bar{w}_jC_j\big(OPT(\bar{I})\big)+ \Delta-\Delta} \\[0.25em]
		\geq& \frac{\sum_{j=1}^{x+B} mw_jC_j\big(WLRL(I)\big)}{\sum_{j=1}^{x+B} mw_jC_j\big(OPT(I)\big)} = \Rto(I).
	\end{align*}
\end{proof}

Finally, we note that our worst-case performance ratio is tight. 
This means that there is no value $\alpha<\frac{1}{2}\big(1+\sqrt{2}\big)$ such that for every instance $I$, $\Rto(I) \leq \alpha$. 
This can be seen by analyzing instances with a structure of $x$ small jobs and $B<m$ big jobs that all have the same number of loops $y\geq 2$.
%, see Figure~\ref{fig:TightWorstCaseStructure}.
This structure is similar to the structure \cite{Schwiegelshohn2011} identified for the parallel machine problem and the analysis is done analogously.

\section{Fixed-size flow shops}\label{sec:m_fixed}

In this section, we consider the reentrant flow shop problem in which the number of machines $m$ is not part of the problem input but is considered a fixed parameter. 
We show that the problem remains weakly NP-hard under this assumption and present a pseudo-polynomial time algorithm.
Additionally, we show that the problem can be approximated within arbitrary precision in polynomial time. 

%We conclude our analysis by considering the special case $Fm|reentry, \ p_{ijk} = 1|\sum w_jC_j$, where the number of machines $m$ is a fixed parameter. While the problem remains NP-hard under this assumption, we show that it can be solved to optimality in pseudo-polynomial time or approximated within arbitrary precision in polynomial time.

\subsection{Weak NP-hardness}

To show that the problem remains NP-hard if the number of machines is fixed, we present a reduction from the classical PARTITION problem \citep{Karp1972}. 

\begin{definition}\label{thm:Def_Partition}
	The decision problem PARTITION is defined as follows. Given positive integers~${a_1,\ldots,a_q}$ and $b= \sum_{j=1}^q a_j$, is there a subset $\Sx\subset \St = \{1,\ldots,q\}$ such that ${\sum_{j\in \Sx} a_j = \frac{b}{2}}$?
\end{definition}

Our reduction uses the fact that we can assume an optimal schedule to be non-interruptive. 
This implies that in an optimal schedule, we have two different progressions, starting at time 0 and at time 1, that form a partition of the jobs. 
The analysis of this reduction is similar to the one from \cite{Lenstra1977} for a related problem.

\begin{theorem}\label{thm:NP_hard_Weighted}
	Minimizing the total weighted completion time for a reentrant flow shop with $m=2$ and unit processing times ($F2|reentry, \ p_{ijk} = 1|\sum w_jC_j$) is NP-hard. 
\end{theorem}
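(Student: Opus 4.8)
The plan is to reduce from PARTITION. Given an instance $(a_1,\ldots,a_q,b)$ with $b=\sum_{j=1}^q a_j$ (we may assume $b$ is even, since an odd $b$ renders PARTITION trivially infeasible), I would construct a reentrant flow shop with $m=2$ machines and $n=q$ jobs, where job~$j$ has $\Lps_j=w_j=a_j$. Setting the weights equal to the loop counts is the key modelling choice: by the remark following Lemma~\ref{lemma: Formula for the Objective}, the contribution of a progression to the objective is then independent of the ordering of its jobs, so the schedule is determined, up to irrelevant reorderings, by the \emph{partition} of the jobs into progressions.

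The structural backbone of the argument is Theorem~\ref{thm:NonInterruption} together with the absence of unforced idleness in optimal schedules. For $m=2$ this forces a rigid shape: machine~$1$ processes loops at times $0,1,2,\ldots$ without gaps, loops spaced by $m=2$ lie in the same progression, and non-interruptiveness keeps all loops of a job inside one progression. Hence an optimal schedule consists of exactly two progressions $\Pg_0$ and $\Pg_1$, starting at times $0$ and $1$, whose job sets partition $\{1,\ldots,q\}$. Writing $P_0,P_1$ for the total number of loops in each progression (so $P_0+P_1=b$), the completion time of the $k$-th job in a progression with offset $t_0\in\{0,1\}$ equals $t_0+m$ times the cumulative loop count up to and including that job; summing over both progressions yields
\begin{align*}
\sum_{j=1}^n w_jC_j = P_1 + \big(P_0^2+P_1^2\big) + \sum_{j=1}^n \Lps_j^2,
\end{align*}
where the final term is a constant independent of the partition.

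It then remains to minimise $g(P_1)=P_1+P_0^2+P_1^2=2P_1^2+(1-2b)P_1+b^2$ over the admissible integer splits. This convex parabola has its vertex at $P_1=\tfrac12 b-\tfrac14$, so among integers its minimum is attained uniquely at $P_1=b/2$, with $g(b/2)=\tfrac12 b^2+\tfrac12 b$ while $g(b/2-1)=g(b/2)+1$ and $g(b/2+1)=g(b/2)+3$. Setting the threshold $K\coloneqq \tfrac12 b^2+\tfrac12 b+\sum_{j}\Lps_j^2$, a schedule of cost at most $K$ exists if and only if some subset of the $a_j$ sums to $b/2$, i.e.\ if and only if the PARTITION instance is a YES-instance. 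Since the construction is polynomial and the objective formula above is exact, this establishes NP-hardness, paralleling the reduction of~\cite{Lenstra1977}.

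The main obstacle I anticipate is the linear offset term $P_1$. In a pure balancing problem the quantity $P_0^2+P_1^2$ is symmetric and obviously minimised at perfect balance; here the start-time penalty $P_1$ of the second progression shifts the continuous optimum slightly off $b/2$ (to $\tfrac12 b-\tfrac14$). The crux of the proof is therefore the integrality argument showing that, despite this shift, the integer minimiser remains exactly $P_1=b/2$ and that every other feasible split raises the objective by at least~$1$, so that the threshold $K$ cleanly separates YES- from NO-instances. Some additional care is needed to justify that the two progressions genuinely partition the jobs under the combined non-interruptiveness and no-idleness assumptions, and to handle the assignment of offsets (one would place the lighter-loaded progression at offset~$1$, though in the balanced case the assignment is immaterial and both give cost exactly~$K$).
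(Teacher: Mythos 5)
Your proposal is correct and takes essentially the same approach as the paper: the identical reduction from PARTITION with $w_j=\Lps_j=a_j$, the same use of Theorem~\ref{thm:NonInterruption} to restrict attention to two progressions partitioning the jobs, and the same integrality argument showing that the start-time offset does not move the integer minimizer away from the balanced split (your increment $2d^2+d$ is exactly the paper's $2c^2-c$). The only difference is presentational: you obtain the cost formula directly from Lemma~\ref{lemma: Formula for the Objective}, whereas the paper derives the equivalent expression by comparing against the auxiliary schedule $\Sched(\St)$.
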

\begin{proof}
	Let $(b,a_1,\ldots,a_q)$ be an instance of PARTITION. Without loss of generality, we can assume~$b$ to be even. 
	We define an instance of the reentrant flow shop problem with unit processing times as $n \coloneqq  q,\  \Lps_j\coloneqq a_j$, and $w_j\coloneqq a_j$ for all $j=1,\ldots,n$. Note that for all jobs $j$, $\frac{w_j}{\Lps_j}=1$. 
	Given an optimal schedule for the created instance, we can assume this schedule to be non-interruptive by Theorem~\ref{thm:NonInterruption}. %; otherwise, we transform it as described in the proof
%	This implies that all loops of a job are in the same progression. 
	We define $\Sx$ as the set of jobs that start their loops at even times, i.e., $\Sx= \big \{j: S_{j1}\in \{0,2,4\ldots\}\big\}$. 
	The remaining jobs of set $\St \setminus \Sx$ then start all their loops at odd times, see Figure~\ref{fig:ProofNP_hard_A}. 
	As $\frac{w_j}{\Lps_j}=1$, the total weighted completion time of this schedule does not depend on the order of jobs within sets $\Sx$ and $\St \setminus \Sx$, but only on the assignment of jobs to either set. 
	We refer to this schedule as $\Sched(\Sx)$ with completion times $C_j\big(\Sched(\Sx)\big)$ and to its total weighted completion time as $\OFV(\Sx)=\sum_{j=1}^n w_jC_j\big(\Sched(\Sx)\big)$. 	
	
	Consider another schedule that starts all jobs currently in set $\St \setminus \Sx$ at even times after all jobs of set $\Sx$ are completed; see Figure~\ref{fig:ProofNP_hard_B}. 
	We refer to this schedule as $\Sched(\St)$ with completion times $C_j\big(\Sched(\St)\big)$ and to its total weighted completion time as ${\OFV(\St)=\sum_{j=1}^n w_jC_j\big(\Sched(\St)\big)}$. 	
	Note that $\OFV(\St)$ does not depend on the order of jobs, but only on the given instance of PARTITION. Therefore, it can be seen as constant in the following. 
    We can rewrite $\OFV(\St)$ as follows:

	\begin{align*}
		\OFV(\St) =& \sum_{j \in \Sx} w_j C_j\big(\Sched(\St)\big) + \sum_{j \in \St \setminus \Sx} w_j C_j\big(\Sched(\St)\big)&\\[0.25em]
		=& \sum_{j \in \Sx} w_j C_j\big(\Sched(\Sx)\big) \\
        & + \sum_{j \in \St \setminus \Sx} w_j \big[ C_j\big(\Sched(\Sx)\big) + 2\sum_{i \in \Sx} \Lps_i -1\big]&\\[0.25em]
		=&\OFV(\Sx) + \sum_{j \in \St \setminus \Sx} w_j \left(2\sum_{i \in \Sx} \Lps_i\right) -\sum_{j \in \St \setminus \Sx} w_j&\\[0.25em]
		=&\OFV(\Sx) + 2\left(\sum_{j \in \Sx} a_j\right)\left(\sum_{j \in \St \setminus \Sx} a_j\right) \\
        & -\sum_{j \in \St \setminus \Sx} a_j.&\\[0.25em]
    \end{align*}

    We define ${c\coloneqq \sum_{j \in \Sx} a_j - \frac{1}{2}b} \in \mathbb{Z}$ and the constant ${y \coloneqq \OFV(\St) - \frac{1}{2}b^2+\frac{1}{2}b}$. Together with the above we get:

    % Prevent column break inside environment
    % \filbreak
    \begin{align*}
		\OFV(\Sx) =& \OFV(\St) - 2\left(\frac{1}{2}b+c\right)\left(\frac{1}{2}b-c\right) +\frac{1}{2}b-c&\\[0.25em]
		=& \OFV(\St) - \frac{1}{2}b^2+\frac{1}{2}b + 2c^2 -c&\\[0.25em]
        =& y + 2c^2 - c.&
	\end{align*}
	
	It follows that the instance of PARTITION is a yes-instance if and only if $\OFV(\Sx)\leq y$. 
\end{proof}

\begin{figure*}
	\centering
	\caption{Schematic displays of the schedules in the proof of Theorem~\ref{thm:NP_hard_Weighted}.\label{fig:ProofNP_hard}}
	\subfloat[Optimal schedule $\Sched(\Sx)$ \label{fig:ProofNP_hard_A}]{
		\scalebox{0.85}{
			\begin{tikzpicture}[scale=0.75]
				
				\pgfmathsetmacro{\m}{2.65}
				
%				\node[job1] (41) at (-2,-0.75) {\small $\Sx$};
%				\node[job2] (2) at (-2,-2.25) {\phantom{\small $T\setminus \Sx$}};\node at (2.center) [lab] {\small $T\setminus \Sx$};
%				\node[] at (-1,-1.5) {$\Sched(\Sx)$};
				
				\node (Machines) at (-1,0) {\small Machine};
				\foreach \x in {1,2}{
					\node (Mone) at (-1,-\x) {$\x$};
				}
				
				\draw[-] (0,0) -- (0, -\m);
				\draw[-latex] (0, -\m) -- node[pos=1,below] {} (13, -\m);
				
				\node[job_tr] (41) at (10,-1) {\small $\ldots$};
				\node[job_tr] (41) at (11,-2) {\small $\ldots$};
				
				\foreach \x in {0,2,4,6,8}{
					\node[job1] (41) at (\x,-1) {};
					\node[job2] (41) at (\x+1,-1) {};
					\node[job1] (41) at (\x+1,-2) {};
					\node[job2] (41) at (\x+2,-2) {};
				}
				
				%Legende:
				
				\node[job1] (leg_d) at (14,-0.5) {};
				\node[right of = leg_d,anchor=west,xshift=-0.5cm] (leg_d_txt)  {$\Sx$};
				
				\node[job2] (leg_r) at (14,-1.7) {};
				\node[right of = leg_r,anchor=west,xshift=-0.5cm] (leg_r_txt)  {$\St \setminus \Sx$};
				
				\node (a) at (1,0.5) {};
				
	\end{tikzpicture}}}%
	
	\subfloat[Schedule $\Sched(\St)$ \label{fig:ProofNP_hard_B}]{
		\scalebox{0.85}{
			\begin{tikzpicture}[scale=0.75]

				\pgfmathsetmacro{\m}{2.65}
				
				\draw[-] (0,0) -- (0, -\m);
				\draw[-latex] (0, -\m) -- node[pos=1,below] {} (13, -\m);
				
				\node (Machines) at (-1,0) {\small Machine};
				\foreach \x in {1,2}{
					\node (Mone) at (-1,-\x) {$\x$};
				}
				
%				\node[job1] (41) at (-2,-0.75) {\small $\Sx$};
%				\node[job2] (2) at (-2,-2.25) {\phantom{\small $T\setminus \Sx$}};\node at (2.center) [lab] {\small $T\setminus \Sx$};
%				\node[] at (-3,-1.5) {$\Sched(T)$};
				
				\node[job_tr] (41) at (4,-1) {\small $\ldots$};
				\node[job_tr] (41) at (4,-2) {\small $\ldots$};
				
				\node[job_tr] (41) at (10,-1) {\small $\ldots$};
				\node[job_tr] (41) at (11,-2) {\small $\ldots$};
				
				\foreach \x in {0,2,5}{
					\node[job1] (41) at (\x,-1) {};
					\node[job1] (41) at (\x+1,-2) {};
				}
				\foreach \x in {7,9}{
					\node[job2] (41) at (\x,-1) {};
					\node[job2] (41) at (\x+1,-2) {};
				}			
				
				\node[job1] (leg_d) at (14,-0.5) {};
				\node[right of = leg_d,anchor=west,xshift=-0.5cm] (leg_d_txt)  {$\Sx$};
				
				\node[job2] (leg_r) at (14,-1.7) {};
				\node[right of = leg_r,anchor=west,xshift=-0.5cm] (leg_r_txt)  {$\St \setminus \Sx$};
				
				\node (a) at (1,0.5) {};
		\end{tikzpicture}}	
	}
\end{figure*}

% An NP-hard problem is said to be weakly NP-hard if there is an algorithm with pseudo-polynomial runtime that solves it. 
% An algorithm is said to have pseudo-polynomial runtime if its runtime is polynomial in the size of the input in unary encoding (see, e.g., \citealp{Lawler1977,Garey1978strong,Rothkopf1966, Lawler1969, Sahni1976}).  
%Many pseudo-polynomial time algorithms have been presented for scheduling problems \citep{Rothkopf1966, Lawler1969, Sahni1976, Lawler1977}. 
%Many of such algorithms are based on dynamic programming. 
%\cite{Rothkopf1966} and \cite{Lawler1969} outline dynamic programming approaches that can be applied to parallel machine scheduling problems. 

In what follows, we present a pseudo-polynomial time algorithm for ${Fm|reentry, \ p_{ijk} = 1|\sum w_jC_j}$ that uses dynamic programming techniques. 
An algorithm is said to have pseudo-polynomial runtime if its runtime is polynomial in the size of the input in unary encoding (see, e.g., \citealp{Lawler1977,Garey1978strong,Rothkopf1966, Lawler1969, Sahni1976}). 
Due to Theorem~\ref{thm:NonInterruption}, we can limit our analysis to non-interruptive schedules. 
This implies that once the first loop of a job is scheduled, we may assume that the remaining loops of this job are scheduled within the same progression without any interruption. 
Furthermore, note that jobs assigned to the same progression must be scheduled in decreasing order of $\frac{w_j}{\Lps_j}$ (this can be shown via an interchange argument). 
These two observations significantly reduce the class of schedules to be considered. 
In what follows, we assume the jobs to be indexed so that $\frac{w_1}{\Lps_1} \geq \frac{w_2}{\Lps_2} \geq \ldots \geq\frac{w_n}{\Lps_n} $ holds.

Consider now the $m$ progressions defined by the jobs with starting times~$0,1,\ldots,m-1$. We denote these progressions as $\Pg_1,\ldots,\Pg_m$. 
It can be easily verified that $m\sum_{j=1}^n \Lps_j$ is an upper bound on the completion time of the last job in any optimal schedule. 
We define a function $\FDP_j: \{0,1,\ldots,m\sum_{\bar{j}=1}^n \Lps_{\bar{j}}\}^m \rightarrow \mathbb{N}_0 \cup \{\infty\}$ for each $j = 1,\ldots,n$. 
The value $\FDP_j(t_1,\ldots,t_m)$ shall represent the minimum total weighted completion time when jobs $1,\ldots,j$ are scheduled such that the last job in progression $\Pg_i$ is completed at time $t_i$ for $i = 1,\ldots,m$. 
Further, we define the function ${\FDP_0: \{0,1,\ldots,m\sum_{j=1}^n \Lps_j\}^m \rightarrow \{0,\infty\}}$ with the initial condition: $\FDP_0(0, 1, \ldots, m-1)=0$, and $\FDP_0(t_1,\ldots,t_m)=\infty$ for all other vectors $(t_1,\ldots,t_m)\in \{0,1,\ldots,m\sum_{j=1}^n \Lps_j\}^m$.

The following dynamic programming relationship holds for all $j=1,\ldots,n$:
\begin{multline*} 
\FDP_j(t_1,\ldots,t_m) \\
= \min_{i=1,\ldots,m} \big(w_jt_i + \FDP_{j-1}(t_1,\ldots,t_i-m\Lps_j,\ldots,t_m)\big).
\end{multline*}

%The optimal objective function value can then be determined by the minimum value of the function $\FDP_n$.
These dynamic programming equations can be solved in runtime $O\big(mn(m\sum_{j=1}^n \Lps_j)^m\big)$. 
As we consider~$m$ to be a constant in this section, this algorithm is polynomial in the number of jobs $n$ and in the total number of loops $\sum_{j=1}^n \Lps_{j}$. 
In practice, tighter bounds on the runtime may be established, e.g., by finding a tighter upper bound on the last completion time of a job in an optimal schedule. 

We note that the idea of our dynamic programming algorithm is similar to the dynamic programming approaches for parallel machine scheduling problems outlined in \cite{Rothkopf1966} and \cite{Lawler1969}. 
The major difference of our algorithm lies in the fact that, instead of considering $m$ parallel machines that are available at time $0$, we consider progressions $\Pg_1,\ldots,\Pg_m$ starting at times $0,1,\ldots,m-1$ and loops of jobs that are processed without interruptions. 

%The parameter that the runtime of our pseudo-polynomial time algorithm depends on, is the total number of loops $\sum_{j=1}^n \Lps_{j}$. 
%Additionally, we consider the total number of machines to be a constant. This means that our algorithm may have a non-polynomial runtime in the parameter~$m$, but not in the parameters $n$ and $\sum_{j=1}^n \Lps_{j}$. 

\subsection{A fully polynomial-time approximation scheme}

In this subsection, we present a fully polynomial-time approximation scheme (FPTAS) for reentrant flow shops with unit processing times and the total weighted completion time objective. 
An FPTAS is defined as a family of algorithms with performance ratio $1 + \varepsilon$ for all $\varepsilon > 0$ such that their runtime is polynomial in the size of the input and in $\frac{1}{\varepsilon}$. 
Similar to the worst-case analysis of the WLRL rule, we use dummy jobs to transform an instance of $Fm|reentry, \ p_{ijk} = 1|\sum w_jC_j$ into an instance of $Pm||\sum w_j C_j$. Then, we apply the FPTAS for $Pm||\sum w_j C_j$ proposed by \cite{Sahni1976}. 

The following definition shows the transformation. 

\begin{definition}\label{def:Trafo_FPTAS}
	For an instance $I$ of $Fm| reentry, p_{ijk}=1| \sum w_j C_j$, we create an instance $\bar{I}$ of $P_m|| \sum_j w_j C_j$ as follows: $\bar{p}_j \coloneqq m\Lps_{j}, \ \bar{w}_j \coloneqq w_{j},  \ j=1,\ldots,n$. 
	Let $\bar{w}_{\max}\coloneqq \max_{j=1,\ldots,n} \bar{w}_j$ denote the maximum weight of the jobs. 
	We additionally define $m-1$ dummy jobs with $\bar{p}_{n+i}\coloneqq i$ and $\bar{w}_j \coloneqq  \bar{w}_{\max}, \ j = n+1,\ldots,n+m-1$. 
\end{definition}

The following lemma shows that we can assume that no two dummy jobs are processed on the same machine. 

\begin{lemma}\label{lem:FPTAS_swap}
	For a feasible schedule $\Sched$ of $\bar{I}$, we can find in $O(n \log n)$ a schedule $\Sched'$ such that no two dummy jobs are assigned to the same machine, the jobs are sorted according to the WSPT rule on each machine, and $\sum \bar{w}_jC_j(\Sched') \leq \sum \bar{w}_jC_j(\Sched)$. 
\end{lemma}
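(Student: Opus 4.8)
The plan is to first reduce to the canonical per-machine order and then to separate the dummy jobs by a sequence of objective-non-increasing exchanges. For the first part, recall that on a fixed machine the total weighted completion time is minimized by sequencing the assigned jobs in WSPT order; hence sorting each machine of $\Sched$ by non-increasing $\bar{w}_j/\bar{p}_j$ cannot increase the objective, and this takes $O(n\log n)$. The key structural consequence is that after the sort every dummy precedes every real job on its machine: a dummy job $n+i$ has ratio $\bar{w}_{\max}/i \ge \bar{w}_{\max}/(m-1) > \bar{w}_{\max}/m \ge w_j/(m\Lps_j)$, the last expression being the ratio of any real job $j$ (using $w_j \le \bar{w}_{\max}$ and $\Lps_j \ge 1$). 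Thus the dummies form a strict prefix on each machine, and among themselves they appear in increasing order of processing time.

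For the second part, suppose some machine $A$ carries $s \ge 2$ dummies. Since there are only $m-1$ dummies on $m$ machines, the pigeonhole principle guarantees a dummy-free machine $B$. Writing $W_R(\cdot)$ for the total real weight on a machine and $a$ for the size of the smallest dummy on $A$, moving that dummy to the front of $B$ changes the objective by exactly $a\big[(W_R(B)-W_R(A)) - (s-1)\bar{w}_{\max}\big]$: removing it advances the $s-1$ trailing dummies and all reals of $A$, inserting it at the head of $B$ delays all reals of $B$, and its own completion time is unchanged. Whenever a dummy-free machine $B$ is no heavier than $A$ in real weight, this move is non-increasing and strictly reduces the potential $\sum_{\text{machines}}\max(0,\#\text{dummies}-1)$, so iterating resolves such machines in $O(n\log n)$ overall.

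The main obstacle is the remaining case, in which \emph{every} dummy-free machine carries strictly more real weight than $A$; there a pure dummy move increases the objective, so the exchange must simultaneously relocate real jobs. I would attack this with the pairwise form of the WSPT objective, $\sum_j \bar{w}_j \bar{p}_j + \sum \min(\bar{w}_i \bar{p}_j,\, \bar{w}_j \bar{p}_i)$ over co-located pairs, noting that each co-located dummy pair contributes at least $\bar{w}_{\max}$. The plan is to swap the smallest dummy of $A$ with the leading real job of $B$ and re-sort, and to bound the added real–real interaction against the dummy–dummy interaction that is saved. The delicate point is showing that the source machine, the target machine, and the exchanged real job can always be chosen so that the net change is non-positive, and in particular that relocating a many-loop real job (with large $\bar{p}_r$) cannot wipe out the $\bar{w}_{\max}$ saving. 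Establishing this non-increasing compensating exchange, together with a terminating potential argument, is the crux; everything else is bookkeeping.
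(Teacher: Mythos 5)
Your reduction to WSPT order, the observation that the dummies then form a prefix on each machine (in increasing order of processing time), the pigeonhole argument for a dummy-free machine, and the exact objective change $a\bigl[(W_R(B)-W_R(A))-(s-1)\bar{w}_{\max}\bigr]$ for a pure dummy relocation are all correct and match the first half of the paper's argument. But the proof has a genuine gap exactly where you say it does: the case in which every dummy-free machine is too heavy for a pure relocation is left unresolved, and the pairwise-interaction attack you sketch is not carried out. That case cannot be waved away, since one can easily load all the heavy real jobs onto the dummy-free machines.

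The paper closes this case with a single direct exchange, and no pairwise decomposition is needed. Let $n+j_1,n+j_2$ be the first two dummies on machine $i$, let $j'_1$ be the first (necessarily real) job on a dummy-free machine $i'$, and let $\Jbs_i$, $\Jbs_{i'}$ denote the remaining jobs on the two machines. Splitting on the \emph{tail} weights rather than the total real weights, in the case $\sum_{j\in\Jbs_i}\bar{w}_j<\sum_{j\in\Jbs_{i'}}\bar{w}_j$ one swaps the dummy $n+j_2$ with the real job $j'_1$; the objective changes by
\[
\bar{p}_{n+j_1}\bigl(\bar{w}_{j'_1}-\bar{w}_{n+j_2}\bigr)+\bigl(\bar{p}_{j'_1}-\bar{p}_{n+j_2}\bigr)\Bigl(\sum_{j\in\Jbs_i}\bar{w}_j-\sum_{j\in\Jbs_{i'}}\bar{w}_j\Bigr),
\]
where the first term is $\leq 0$ because the dummy has weight $\bar{w}_{\max}$, and the second is $<0$ because every real job satisfies $\bar{p}_{j'_1}\geq m>m-1\geq\bar{p}_{n+j_2}$ while the tail-weight difference is negative by hypothesis. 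Note that the large $\bar{p}_{j'_1}$ you worried about actually \emph{helps}, since it multiplies a negative factor. The complementary case $\sum_{j\in\Jbs_i}\bar{w}_j\geq\sum_{j\in\Jbs_{i'}}\bar{w}_j$ is handled by your pure relocation of $n+j_1$. Each exchange strictly reduces the number of co-located dummy pairs, so at most $m-1$ exchanges (constant time each for fixed $m$) plus a final re-sort give the claimed $O(n\log n)$ bound. If you replace your unfinished third paragraph with this swap, your proof is complete and essentially coincides with the paper's.
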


\begin{proof}
	For jobs that are scheduled on the same machine, it is well known that sorting them according to the WSPT rule does not increase the total weighted completion time \citep{Smith1956}. 
	As $\bar{w}_{j}= w_{\max}$ and $\bar{p}_{n+i}<\bar{p}_j$ for all $j=1,\ldots,n$ and $i=1,\ldots m-1$, we can assume that on each machine, all dummy jobs are completed before the first ``real'' job starts its processing. 
%	We show that remaining properties via interchange arguments. 
	
	Assume that in schedule $\Sched$, there are two dummy jobs, $n+j_1$ and~$n+j_2$, processed as the first two jobs on the same machine $i$. 
	As there are $m-1$ dummy jobs, there must be a machine $i'$ on which no dummy job is processed. 
	Denote as $\Jbs_i$ the set of jobs scheduled on machine $i$ apart from $n+j_1$ and~$n+j_2$.
	Denote as $j'_1$ the first job scheduled on machine $i'$ and the remaining jobs on machine $i'$ as $\Jbs_{i'}$, respectively. 
	
	Consider the following two cases:

    \pmb{(i) $\sum_{j \in \Jbs_i} \bar{w}_j \geq \sum_{j \in \Jbs_{i'}} \bar{w}_j$}. We schedule dummy job $n + j_1$ as the first job on machine $i'$. 
	The change in the total completion time can be computed as follows:
	% Prevent column break inside environment
    % \filbreak
    \begin{align*}
		& \bar{p}_{n+j_1} \left[ \left(\bar{w}_{j'_1}+\sum_{j \in \Jbs_{i'}} \bar{w}_j\right) -   \left(\bar{w}_{n+j_2}+\sum_{j \in \Jbs_{i}} \bar{w}_j\right)\right]\\
		& = \bar p_{i_1} \left(\bar{w}_{j'_1}-w_{\max} + \sum_{j \in \Jbs_{i'}} \bar{w}_j  -   \sum_{j \in \Jbs_{i}} \bar{w}_j\right) \leq 0.
	\end{align*}
	
    \pmb{(ii) $\sum_{j \in \Jbs_i} \bar{w}_j < \sum_{j \in \Jbs_{i'}} \bar{w}_j$}. We interchange dummy job~$n + j_2$ with job~$j'_1$. 
	The change in the total completion time can be computed as follows:
	\begin{align*}
		& \bar{p}_{n+j_1}\left(\bar{w}_{j'_1} - \bar{w}_{n+j_2}\right) \\
        & +  \left(\bar{p}_{j'_1} - \bar{p}_{n+j_2}\right)\left(\sum_{j \in \Jbs_{i}} \bar{w}_j - \sum_{j \in \Jbs_{i'}} \bar{w}_j\right) < 0.
	\end{align*}
	
	The described interchanges can be performed in constant time for fixed $m$ while sorting each machine by non-increasing ratios~$\frac{w_j}{p_j}$ may require time $O(n \log n)$.
		
\end{proof}

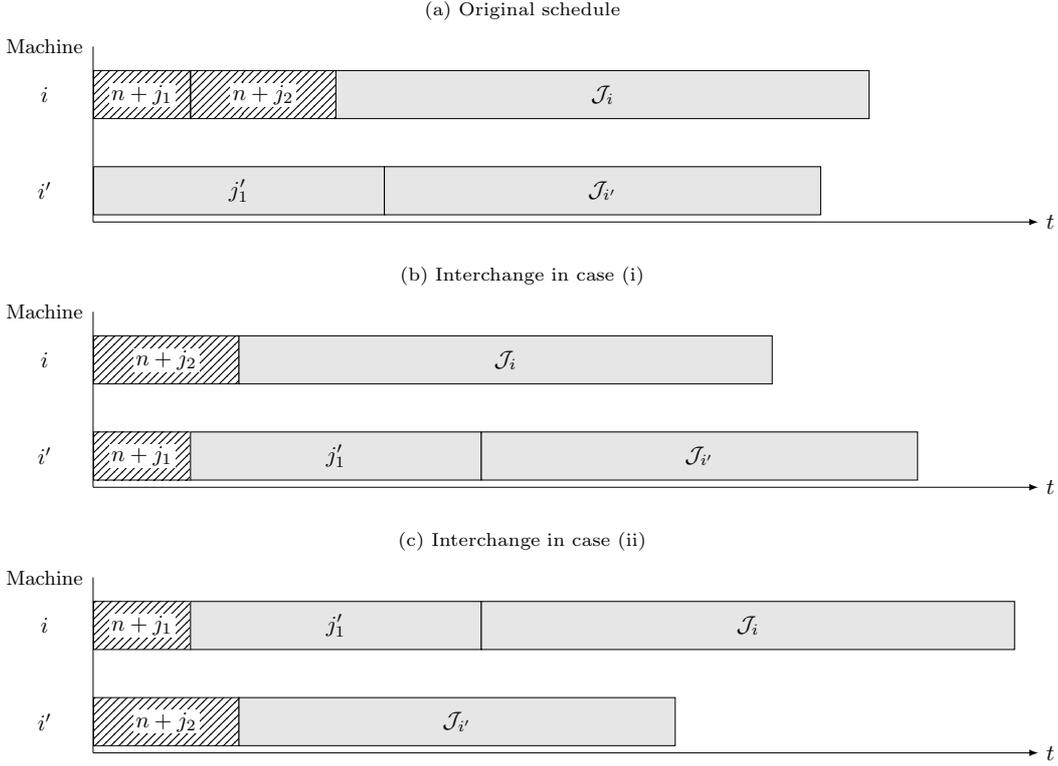
\begin{figure*}
	\centering
	\caption{Schematic Gantt chart for the interchanges in the proof of Lemma~\ref{lem:FPTAS_swap}}\label{fig:FPTAS_swap}
	\subfloat[Original schedule]{
	\scalebox{0.85}{
		\begin{tikzpicture}[scale=0.75]
			\tikzset{job_m/.style={rectangle,draw,anchor=west,minimum height=0.75cm,minimum width=5*.75cm}}			
			\tikzset{job_d/.style={rectangle,draw,anchor=west,minimum height=0.75cm}}			
			\pgfmathsetmacro{\m}{3.65}
			
			\draw[-] (0,0) -- (0, -\m);
			\draw[-latex] (0, -\m) -- node[pos=1,right] {$t$} (19.5, -\m);
			\node (Machines) at (-1,0) {\small Machine};
			\node (Mone) at (-1,-1) {$i$};
			\node (Mone) at (-1,-3) {$i'$};
						
			\node[minimum width=2*0.75cm, job_d,pattern=north east lines] (41) at (0,-1) {};
			\node[lab] at (41) {$n+j_1$};			
			\node[minimum width=3*0.75cm, job_d,pattern=north east lines] (41) at (2,-1) {};
			\node[lab] at (41) {$n+j_2$};
			
			\node[job_m, minimum width=11*0.75cm, fill=cj1] (41) at (5,-1) {};
			\node[] at (41) {$\Jbs_i$};
			
			\node[job_m, minimum width=6*0.75cm, fill=cj1] (41) at (0,-3) {};
			\node[] at (41) {$j'_{1}$};
			\node[job_m, minimum width=9*0.75cm, fill=cj1] (41) at (6,-3) {};
			\node[] at (41) {$\Jbs_{i'}$};
				
	\end{tikzpicture}}}
	
	\subfloat[Interchange in case (i)]{
		\scalebox{0.85}{
			\begin{tikzpicture}[scale=0.75]
				\tikzset{job_m/.style={rectangle,draw,anchor=west,minimum height=0.75cm,minimum width=5*.75cm}}			
				\tikzset{job_d/.style={rectangle,draw,anchor=west,minimum height=0.75cm}}			
				\pgfmathsetmacro{\m}{3.65}
				
				\draw[-] (0,0) -- (0, -\m);
				\draw[-latex] (0, -\m) -- node[pos=1,right] {$t$} (19.5, -\m);
				\node (Machines) at (-1,0) {\small Machine};
				\node (Mone) at (-1,-1) {$i$};
				\node (Mone) at (-1,-3) {$i'$};
				
				\node[minimum width=3*0.75cm, job_d,pattern=north east lines] (41) at (0,-1) {};
				\node[lab] at (41) {$n+j_2$};
				
				\node[job_m, minimum width=11*0.75cm, fill=cj1] (41) at (3,-1) {};
				\node[] at (41) {$\Jbs_i$};
				
				\node[minimum width=2*0.75cm, job_d,pattern=north east lines] (41) at (0,-3) {};
				\node[lab] at (41) {$n+j_1$};			
				
				\node[job_m, minimum width=6*0.75cm, fill=cj1] (41) at (2,-3) {};
				\node[] at (41) {$j'_{1}$};
				\node[job_m, minimum width=9*0.75cm, fill=cj1] (41) at (8,-3) {};
				\node[] at (41) {$\Jbs_{i'}$};
				
	\end{tikzpicture}}}

	\subfloat[Interchange in case (ii)]{
		\scalebox{0.85}{
			\begin{tikzpicture}[scale=0.75]
				\tikzset{job_m/.style={rectangle,draw,anchor=west,minimum height=0.75cm,minimum width=5*.75cm}}			
				\tikzset{job_d/.style={rectangle,draw,anchor=west,minimum height=0.75cm}}			
				\pgfmathsetmacro{\m}{3.65}
				
				\draw[-] (0,0) -- (0, -\m);
				\draw[-latex] (0, -\m) -- node[pos=1,right] {$t$} (19.5, -\m);
				\node (Machines) at (-1,0) {\small Machine};
				\node (Mone) at (-1,-1) {$i$};
				\node (Mone) at (-1,-3) {$i'$};
				
				\node[minimum width=2*0.75cm, job_d,pattern=north east lines] (41) at (0,-1) {};
				\node[lab] at (41) {$n+j_1$};			
				\node[job_m, minimum width=6*0.75cm, fill=cj1] (41) at (2,-1) {};
				\node[] at (41) {$j'_{1}$};
				
				\node[job_m, minimum width=11*0.75cm, fill=cj1] (41) at (8,-1) {};
				\node[] at (41) {$\Jbs_i$};

				\node[minimum width=3*0.75cm, job_d,pattern=north east lines] (41) at (0,-3) {};
				\node[lab] at (41) {$n+j_2$};
				\node[job_m, minimum width=9*0.75cm, fill=cj1] (41) at (3,-3) {};
				\node[] at (41) {$\Jbs_{i'}$};
				
	\end{tikzpicture}}}
\end{figure*}

Due to the previous lemma, we know that we can apply a simple interchange on schedules generated by Sahni's FPTAS to compute schedules in which all dummy jobs start processing at time $0$ on separate machines. 
This allows us to devise an FPTAS for the reentrant flow shop problem. 

\begin{theorem}\label{thm:FPTAS}
	There exists an FPTAS for $Fm| reentry, p_{ijk}=1| \sum w_j C_j$.
\end{theorem}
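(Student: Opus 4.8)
The plan is to leverage the transformation from Definition~\ref{def:Trafo_FPTAS} together with Sahni's FPTAS for $Pm||\sum w_j C_j$ and the structural result of Lemma~\ref{lem:FPTAS_swap}. The key insight is that the transformation maps a reentrant flow shop instance $I$ to a parallel machine instance $\bar{I}$ by setting $\bar{p}_j = m\Lps_j$ and introducing $m-1$ dummy jobs of processing times $1,\ldots,m-1$, each with the maximum weight. By Lemma~\ref{lem:FPTAS_swap}, any feasible schedule of $\bar{I}$ can be converted in $O(n\log n)$ time into one in which each dummy job occupies its own machine and starts at time~$0$; this forces the $m$ machines to effectively begin processing their real jobs at the staggered times $0,1,\ldots,m-1$, which is exactly the structure of the $m$ progressions $\Pg_1,\ldots,\Pg_m$ in a non-interruptive reentrant schedule.

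First I would establish the correspondence between schedules: a non-interruptive schedule of $I$ corresponds to a schedule of $\bar{I}$ (of the canonical dummy-separated form) in which the real job completion times satisfy $C_j(\bar{I}) = m\,C_j(I)$, so that $\sum \bar{w}_j C_j(\bar{I}) = m \sum w_j C_j(I) + \Delta$, where $\Delta$ is the fixed weighted completion time contributed by the dummy jobs. Since $\Delta$ is a constant determined solely by $m$ and $\bar{w}_{\max}$, minimizing the real-job objective over $\bar{I}$ is equivalent to minimizing $\sum w_j C_j$ over non-interruptive schedules of $I$, which by Theorem~\ref{thm:NonInterruption} is without loss of optimality. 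Next I would describe the algorithm itself: given $\varepsilon > 0$ and an instance $I$, construct $\bar{I}$, run Sahni's FPTAS on $\bar{I}$ with a suitably chosen precision parameter $\bar{\varepsilon}$, apply the Lemma~\ref{lem:FPTAS_swap} interchange to obtain the canonical form, and read off the induced reentrant schedule.

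The delicate step is the error analysis, because the additive constant $\Delta$ does not scale with the objective: a multiplicative $(1+\bar{\varepsilon})$ guarantee on $\sum \bar{w}_j C_j(\bar{I})$ translates into a guarantee on the reentrant objective only after subtracting $\Delta$, and an additive shift can inflate the relative error. I would handle this by bounding $\Delta$ relative to the optimal reentrant objective and choosing $\bar{\varepsilon}$ small enough (e.g.\ $\bar{\varepsilon} = \Theta(\varepsilon)$ with a constant depending on $m$) so that the approximation factor on the real-job contribution is at most $1+\varepsilon$. Concretely, if $ALG$ denotes the parallel-machine objective returned, then
\begin{align*}
m\,\textstyle\sum w_j C_j(\text{our schedule}) &= ALG - \Delta \\
&\leq (1+\bar{\varepsilon})\,OPT(\bar{I}) - \Delta,
\end{align*}
and since $OPT(\bar{I}) = m\,OPT(I) + \Delta$, the term $\bar{\varepsilon}\,\Delta$ must be absorbed into the desired $\varepsilon\, m\,OPT(I)$ slack, which is possible because $OPT(I) \geq \tfrac{1}{m}\Delta$ up to lower-order terms (as every schedule incurs at least the staggered startup cost).

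**The main obstacle** I expect is precisely this bookkeeping: verifying that $\bar{\varepsilon}$ can be chosen polynomially in $\varepsilon$ (and depending only on the fixed $m$) so that the additive $\Delta$ does not break the multiplicative guarantee, while keeping Sahni's runtime polynomial in the input size and $1/\varepsilon$. The remaining ingredients — correctness of the schedule correspondence, the polynomial-time interchange from Lemma~\ref{lem:FPTAS_swap}, and the polynomial runtime of Sahni's scheme applied to $n+m-1$ jobs on $m$ machines — are routine once this error propagation is pinned down.
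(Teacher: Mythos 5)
Your proposal follows essentially the same route as the paper: transform $I$ into $\bar{I}$ via Definition~\ref{def:Trafo_FPTAS}, run Sahni's FPTAS, canonicalize the dummy jobs with Lemma~\ref{lem:FPTAS_swap}, read off the reentrant schedule, and absorb the additive dummy contribution $\Delta$ into the multiplicative guarantee by bounding $\Delta$ against $\OFVV\big(OPT(I)\big)$ (the paper uses $\OFVV\big(OPT(I)\big)\geq \sum_j w_j m\Lps_j \geq m w_{\max}$ to get a factor $1+\varepsilon\frac{m+1}{2}$, which is equivalent to your rescaling of $\bar{\varepsilon}$). One small slip: since $\bar{p}_j = m\Lps_j$ already accounts for the $m$ time units per loop, the correspondence is $C_j\big(\Sched(\bar{I})\big)=C_j\big(\Sched(I)\big)$ exactly, not $m\,C_j(I)$; your spurious factor of $m$ is applied consistently on both sides and cancels in the ratio, so the argument still goes through.
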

\begin{proof}
	Let $\varepsilon>0$ be arbitrary but fixed. For an instance $I$ of $Fm| reentry, p_{ijk}=1| \sum w_j C_j$, create an instance $\bar{I}$ of $Pm||\sum w_jC_j$ as described in Definition~\ref{def:Trafo_FPTAS}. 
	Let $\OFVV\big(OPT(I)\big)$ and $\OFVV\big(OPT(\bar{I})\big)$ denote the optimal objective function values of instances $I$ and $\bar{I}$, respectively. 
	We apply Sahni's FPTAS and the interchanges described in Lemma~\ref{lem:FPTAS_swap} to instance $\bar{I}$ to obtain a schedule $\Sched(\bar{I})$ in time $O\big(n(\frac{n^2}{\varepsilon})^{m-1} + n \log n\big)$. 
	For this schedule, $\sum_{j=1}^{n+m-1} \bar{w}_jC_j\big(\Sched(\bar{I})\big) \leq (1 + \varepsilon) \OFVV\big(OPT(\bar{I})\big)$ holds. 
	We define a schedule $\Sched(I)$ for instance $I$ in which all jobs have the same completion times as the ``real'' jobs in schedule $\Sched(\bar{I})$. 	
	Additionally, define $\Delta \coloneqq w_{\max}\sum_{i=1}^m (i-1) = w_{\max}\frac{m\cdot(m-1)}{2}$.
	We can express the total weighted completion time of $\Sched$ as $\sum_{j=1}^n w_jC_j\big(\Sched(I)\big) = \sum_{j=1}^n \bar{w}_jC_j\big(\Sched(\bar{I})\big) + \Delta$. 
	As the optimal value $\OFVV\big(OPT(I)\big)$ is lower bounded by $\sum_{j = 1}^n w_j m\Lps_j \geq mw_{\max}$, we have:
	\filbreak
    \begin{align*}
		\sum_{j=1}^n w_jC_j\big(\Sched(I)\big) & \leq (1 + \varepsilon) \OFVV\big(OPT(I)\big) + \varepsilon \Delta \\
        & \leq (1 + \varepsilon \frac{m+1}{2}) \OFVV\big(OPT(I)\big).
	\end{align*}
	As $m$ is constant, this completes the proof.
\end{proof}
%\OFVV\big(\Sched(I) \big)

\section{Conclusion and outlook}\label{sec:Conclusion}

In this paper, we considered scheduling a set of jobs that must go through multiple loops in a flow shop with the objective of minimizing the total (weighted) completion time.  
We introduced non-interruptive schedules and showed that there is a non-interruptive schedule that minimizes the total weighted completion time. 
We showed that the general problem of minimizing the total weighted completion time is strongly NP-hard.
We introduced the priority rule LRL and showed that it minimizes the total completion time and the total weighted completion time if the weights are agreeable. 
We introduced the priority rule WLRL and provided a tight performance guarantee of about 1.2. 
Additionally, we analyzed reentrant flow shops with a fixed number of machines.
We presented a dynamic programming algorithm that has pseudo-polynomial runtime in the total number of loops and an FPTAS.

For future research, we propose to examine the total completion time objective for reentrant flow shops with machine-ordered and proportionate processing times. 
For both these processing time structures, one can easily create an instance in which optimal schedules allow for unforced idleness. 
For machine-ordered processing times, one could analyze a modification of the WLRL priority rule that alternates between scheduling jobs according to WLRL and waiting for a job to complete its current loop. 
For proportionate processing times, one can show that the problems of minimizing the makespan and minimizing the total (unweighted) completion time are strongly NP-hard by a reduction from 3-PARTITION.
One could analyze a generalization of the WLRL rule that takes into account the processing times of jobs, i.e., priority values~$\frac{w_j}{p_j\cdot \Lps_j}$. 
In a computational experiment, this rule's empirical average performance ratio was about 1.15, while the empirical worst case was about 2.09. 
It would be interesting to investigate whether a constant bound on the worst-case performance ratio of this priority rule can be established. 
Additionally, it should be analyzed how the WLRL priority rule can be embedded in more complex scheduling heuristics for reentrant shops with arbitrary processing times. 
For these problems, combining priority rules with constraint programming may be a promising direction. 

Another direction for future research is stochastic flow shops with reentry. 
In addition to stochastic processing times, which have been studied for conventional flow shops (see, e.g., \citealp{Pinedo2022,Emmons2012}), one could study stochastic reentries. 
In real-world production systems, uncertainty in reentries may arise if a job must undergo repair or rework steps after a defect has been detected. 
For such a setting, conditions could be established under which the ''Most Expected Remaining Loops First'' rule minimizes the expected makespan or the "Weighted Least Expected Remaining Loops First'' rule minimizes the expected total weighted completion time.
Also, one could investigate performance bounds for these priority rules that may depend on the coefficients of variation of the underlying distributions. 

%For such a setting, the conditions under which stochastic versions of deterministic priority rules, e.g., the ''Most Expected Remaining Loops first'' rule to minimize the expected makespan or the "Weighted Least Expected Remaining Loops first'' rule to minimize the expected total weighted completion time. 

%priority rules for minimizing the total weighted completion time in machine-ordered and proportionate flow shops. For proportionate flow shops, an adaption of the rule we call "" could be a promising rule.  
%Additionally, we propose to investigate the total weighted completion time objective. 
%For this problem, an analysis of the ``Weighted Least Remaining Loops first'' (WLRL) rule would be a promising research direction. 
%Preliminary tests have shown an average performance ratio of 1.01 and a worst case ratio of about 1.2. 

\bmhead{Acknowledgements}
Nicklas Klein expresses his gratitude to the Hasler Foundation for supporting a research visit during which this paper was initiated. 

The authors Maximilian von Aspern and Felix Buld were funded by the Deutsche Forschungsgemeinschaft (DFG, German Research Foundation) - project number 277991500.
%

%\setlength{\bibsep}{0pt plus 0.3ex}
%\raggedright

\setlength{\bibsep}{0pt plus 0.3ex}

\bibliography{bib_flowshops_reentry}

\begin{thebibliography}{}
\renewcommand{\doi}[1]{\url{https://doi.org/#1}}
\bibcommenthead

\bibitem [\protect \citeauthoryear {%
Chen%
}{%
Chen%
}{%
{\protect \APACyear {2006}}%
}]{%
Chen2006}
\APACinsertmetastar {%
Chen2006}%
\begin{APACrefauthors}%
Chen, J\BHBI S.%
\end{APACrefauthors}%
\unskip\
\newblock
\APACrefYearMonthDay{2006}{}{}.
\newblock
{\BBOQ}\APACrefatitle {A branch and bound procedure for the reentrant
  permutation flow-shop scheduling problem} {A branch and bound procedure for
  the reentrant permutation flow-shop scheduling problem}.{\BBCQ}
\newblock
\APACjournalVolNumPages{The International Journal of Advanced Manufacturing
  Technology}{29}{}{1186--1193,}
\newblock
\begin{APACrefDOI} \doi{10.1007/s00170-005-0017-x} \end{APACrefDOI}
\newblock

\newblock

\PrintBackRefs{\CurrentBib}

\bibitem [\protect \citeauthoryear {%
Choi%
\ \BBA {} Kim%
}{%
Choi%
\ \BBA {} Kim%
}{%
{\protect \APACyear {2007}}%
}]{%
ChoiKim2007}
\APACinsertmetastar {%
ChoiKim2007}%
\begin{APACrefauthors}%
Choi, S\BHBI W.%
\BCBT {}\ \BBA {} Kim, Y\BHBI D.%
\end{APACrefauthors}%
\unskip\
\newblock
\APACrefYearMonthDay{2007}{}{}.
\newblock
{\BBOQ}\APACrefatitle {Minimizing makespan on a two-machine re-entrant
  flowshop} {Minimizing makespan on a two-machine re-entrant flowshop}.{\BBCQ}
\newblock
\APACjournalVolNumPages{Journal of the Operational Research
  Society}{58}{7}{972--981,}
\newblock
\begin{APACrefDOI} \doi{10.1057/palgrave.jors.2602220} \end{APACrefDOI}
\newblock

\newblock

\PrintBackRefs{\CurrentBib}

\bibitem [\protect \citeauthoryear {%
Choi%
\ \BBA {} Kim%
}{%
Choi%
\ \BBA {} Kim%
}{%
{\protect \APACyear {2008}}%
}]{%
Choi2008}
\APACinsertmetastar {%
Choi2008}%
\begin{APACrefauthors}%
Choi, S\BHBI W.%
\BCBT {}\ \BBA {} Kim, Y\BHBI D.%
\end{APACrefauthors}%
\unskip\
\newblock
\APACrefYearMonthDay{2008}{}{}.
\newblock
{\BBOQ}\APACrefatitle {Minimizing makespan on an m-machine re-entrant flowshop}
  {Minimizing makespan on an m-machine re-entrant flowshop}.{\BBCQ}
\newblock
\APACjournalVolNumPages{Computers \& Operations Research}{35}{5}{1684--1696,}
\newblock
\begin{APACrefDOI} \doi{10.1016/j.cor.2006.09.028} \end{APACrefDOI}
\newblock
\APACrefnote{Part Special Issue: Algorithms and Computational Methods in
  Feasibility and Infeasibility}
\newblock

\newblock

\PrintBackRefs{\CurrentBib}

\bibitem [\protect \citeauthoryear {%
Danping%
\ \BBA {} Lee%
}{%
Danping%
\ \BBA {} Lee%
}{%
{\protect \APACyear {2011}}%
}]{%
Danping2011}
\APACinsertmetastar {%
Danping2011}%
\begin{APACrefauthors}%
Danping, L.%
\BCBT {}\ \BBA {} Lee, C.K.%
\end{APACrefauthors}%
\unskip\
\newblock
\APACrefYearMonthDay{2011}{}{}.
\newblock
{\BBOQ}\APACrefatitle {A review of the research methodology for the re-entrant
  scheduling problem} {A review of the research methodology for the re-entrant
  scheduling problem}.{\BBCQ}
\newblock
\APACjournalVolNumPages{International Journal of Production
  Research}{49}{8}{2221--2242,}
\newblock
\begin{APACrefDOI} \doi{10.1080/00207541003720350} \end{APACrefDOI}
\newblock

\newblock

\PrintBackRefs{\CurrentBib}

\bibitem [\protect \citeauthoryear {%
Eastman%
, Even%
\BCBL {}\ \BBA {} Isaacs%
}{%
Eastman%
\ \protect \BOthers {.}}{%
{\protect \APACyear {1964}}%
}]{%
Eastman1964}
\APACinsertmetastar {%
Eastman1964}%
\begin{APACrefauthors}%
Eastman, W.L.%
, Even, S.%
\BCBL {} Isaacs, I.M.%
\end{APACrefauthors}%
\unskip\
\newblock
\APACrefYearMonthDay{1964}{}{}.
\newblock
{\BBOQ}\APACrefatitle {Bounds for the optimal scheduling of n jobs on m
  processors} {Bounds for the optimal scheduling of n jobs on m
  processors}.{\BBCQ}
\newblock
\APACjournalVolNumPages{Management Science}{11}{2}{268--279,}
\newblock
\begin{APACrefDOI} \doi{10.1287/mnsc.11.2.268} \end{APACrefDOI}
\newblock

\newblock

\PrintBackRefs{\CurrentBib}

\bibitem [\protect \citeauthoryear {%
Emmons%
\ \BBA {} Vairaktarakis%
}{%
Emmons%
\ \BBA {} Vairaktarakis%
}{%
{\protect \APACyear {2012}}%
}]{%
Emmons2012}
\APACinsertmetastar {%
Emmons2012}%
\begin{APACrefauthors}%
Emmons, H.%
\BCBT {}\ \BBA {} Vairaktarakis, G.%
\end{APACrefauthors}%
\unskip\
\newblock
\APACrefYear{2012}.
\newblock
\APACrefbtitle {Flow Shop Scheduling: Theoretical Results, Algorithms, and
  Applications} {Flow shop scheduling: Theoretical results, algorithms, and
  applications}\ (\PrintOrdinal{1}\ \BEd).
\newblock
\APACaddressPublisher{New York, NY}{Springer}.
\newblock

\newblock

\PrintBackRefs{\CurrentBib}

\bibitem [\protect \citeauthoryear {%
Garey%
\ \BBA {} Johnson%
}{%
Garey%
\ \BBA {} Johnson%
}{%
{\protect \APACyear {1978}}%
}]{%
Garey1978strong}
\APACinsertmetastar {%
Garey1978strong}%
\begin{APACrefauthors}%
Garey, M.R.%
\BCBT {}\ \BBA {} Johnson, D.S.%
\end{APACrefauthors}%
\unskip\
\newblock
\APACrefYearMonthDay{1978}{}{}.
\newblock
{\BBOQ}\APACrefatitle {``{S}trong'' {NP}-Completeness Results: Motivation,
  Examples, and Implications} {``{S}trong'' {NP}-completeness results:
  Motivation, examples, and implications}.{\BBCQ}
\newblock
\APACjournalVolNumPages{Journal of the ACM}{25}{3}{499--508,}
\newblock
\begin{APACrefDOI} \doi{10.1145/322077.322090} \end{APACrefDOI}
\newblock

\newblock

\PrintBackRefs{\CurrentBib}

\bibitem [\protect \citeauthoryear {%
Graham%
, Lawler%
, Lenstra%
\BCBL {}\ \BBA {} Kan%
}{%
Graham%
\ \protect \BOthers {.}}{%
{\protect \APACyear {1979}}%
}]{%
Graham1979_ThreeField}
\APACinsertmetastar {%
Graham1979_ThreeField}%
\begin{APACrefauthors}%
Graham, R.%
, Lawler, E.%
, Lenstra, J.%
\BCBL {} Kan, A.%
\end{APACrefauthors}%
\unskip\
\newblock
\APACrefYearMonthDay{1979}{}{}.
\newblock
{\BBOQ}\APACrefatitle {Optimization and Approximation in Deterministic
  Sequencing and Scheduling: a Survey} {Optimization and approximation in
  deterministic sequencing and scheduling: a survey}.{\BBCQ}
\newblock
 P.~Hammer, E.~Johnson\BCBL {}\ \BBA {} B.~Korte\ (\BEDS), \APACrefbtitle
  {Discrete \uppercase{O}ptimization \uppercase{II}} {Discrete
  \uppercase{O}ptimization \uppercase{II}}\ (\BVOL~5, \BPGS\ 287--326).
\newblock
\APACaddressPublisher{}{Elsevier}.
\newblock

\newblock

\PrintBackRefs{\CurrentBib}

\bibitem [\protect \citeauthoryear {%
Graves%
, Meal%
, Stefek%
\BCBL {}\ \BBA {} Zeghmi%
}{%
Graves%
\ \protect \BOthers {.}}{%
{\protect \APACyear {1983}}%
}]{%
Graves1983}
\APACinsertmetastar {%
Graves1983}%
\begin{APACrefauthors}%
Graves, S.C.%
, Meal, H.C.%
, Stefek, D.%
\BCBL {} Zeghmi, A.H.%
\end{APACrefauthors}%
\unskip\
\newblock
\APACrefYearMonthDay{1983}{}{}.
\newblock
{\BBOQ}\APACrefatitle {Scheduling of re-entrant flow shops} {Scheduling of
  re-entrant flow shops}.{\BBCQ}
\newblock
\APACjournalVolNumPages{Journal of Operations Management}{3}{4}{197--207,}
\newblock
\begin{APACrefDOI} \doi{10.1016/0272-6963(83)90004-9} \end{APACrefDOI}
\newblock

\newblock

\PrintBackRefs{\CurrentBib}

\bibitem [\protect \citeauthoryear {%
Jing%
, Huang%
\BCBL {}\ \BBA {} Tang%
}{%
Jing%
\ \protect \BOthers {.}}{%
{\protect \APACyear {2011}}%
}]{%
Jing2011}
\APACinsertmetastar {%
Jing2011}%
\begin{APACrefauthors}%
Jing, C.%
, Huang, W.%
\BCBL {} Tang, G.%
\end{APACrefauthors}%
\unskip\
\newblock
\APACrefYearMonthDay{2011}{}{}.
\newblock
{\BBOQ}\APACrefatitle {Minimizing total completion time for re-entrant flow
  shop scheduling problems} {Minimizing total completion time for re-entrant
  flow shop scheduling problems}.{\BBCQ}
\newblock
\APACjournalVolNumPages{Theoretical Computer Science}{412}{48}{6712--6719,}
\newblock
\begin{APACrefDOI} \doi{10.1016/j.tcs.2011.08.030} \end{APACrefDOI}
\newblock

\newblock

\PrintBackRefs{\CurrentBib}

\bibitem [\protect \citeauthoryear {%
Karp%
}{%
Karp%
}{%
{\protect \APACyear {1972}}%
}]{%
Karp1972}
\APACinsertmetastar {%
Karp1972}%
\begin{APACrefauthors}%
Karp, R.M.%
\end{APACrefauthors}%
\unskip\
\newblock
\APACrefYearMonthDay{1972}{}{}.
\newblock
{\BBOQ}\APACrefatitle {Reducibility among Combinatorial Problems} {Reducibility
  among combinatorial problems}.{\BBCQ}
\newblock
\BIn{} R.E.~Miller, J.W.~Thatcher\BCBL {}\ \BBA {} J.D.~Bohlinger\ (\BEDS),
  \APACrefbtitle {Complexity of Computer Computations} {Complexity of computer
  computations}\ (\BPGS\ 85--103).
\newblock
\APACaddressPublisher{Boston, MA}{Springer US}.
\newblock

\newblock

\PrintBackRefs{\CurrentBib}

\bibitem [\protect \citeauthoryear {%
Kawaguchi%
\ \BBA {} Kyan%
}{%
Kawaguchi%
\ \BBA {} Kyan%
}{%
{\protect \APACyear {1986}}%
}]{%
Kawaguchi1986}
\APACinsertmetastar {%
Kawaguchi1986}%
\begin{APACrefauthors}%
Kawaguchi, T.%
\BCBT {}\ \BBA {} Kyan, S.%
\end{APACrefauthors}%
\unskip\
\newblock
\APACrefYearMonthDay{1986}{}{}.
\newblock
{\BBOQ}\APACrefatitle {Worst Case Bound of an \uppercase{LRF} Schedule for the
  Mean Weighted Flow-Time Problem} {Worst case bound of an \uppercase{LRF}
  schedule for the mean weighted flow-time problem}.{\BBCQ}
\newblock
\APACjournalVolNumPages{SIAM Journal on Computing}{15}{4}{1119--1129,}
\newblock
\begin{APACrefDOI} \doi{10.1137/0215081} \end{APACrefDOI}
\newblock

\newblock

\PrintBackRefs{\CurrentBib}

\bibitem [\protect \citeauthoryear {%
Kubiak%
, Lou%
\BCBL {}\ \BBA {} Wang%
}{%
Kubiak%
\ \protect \BOthers {.}}{%
{\protect \APACyear {1996}}%
}]{%
Kubiak1996}
\APACinsertmetastar {%
Kubiak1996}%
\begin{APACrefauthors}%
Kubiak, W.%
, Lou, S.X.C.%
\BCBL {} Wang, Y.%
\end{APACrefauthors}%
\unskip\
\newblock
\APACrefYearMonthDay{1996}{}{}.
\newblock
{\BBOQ}\APACrefatitle {Mean Flow Time Minimization in Reentrant Job Shops with
  a Hub} {Mean flow time minimization in reentrant job shops with a
  hub}.{\BBCQ}
\newblock
\APACjournalVolNumPages{Operations Research}{44}{5}{764--776,}
\newblock
\begin{APACrefDOI} \doi{10.1287/opre.44.5.764} \end{APACrefDOI}
\newblock

\newblock

\PrintBackRefs{\CurrentBib}

\bibitem [\protect \citeauthoryear {%
Lawler%
}{%
Lawler%
}{%
{\protect \APACyear {1977}}%
}]{%
Lawler1977}
\APACinsertmetastar {%
Lawler1977}%
\begin{APACrefauthors}%
Lawler, E.L.%
\end{APACrefauthors}%
\unskip\
\newblock
\APACrefYearMonthDay{1977}{}{}.
\newblock
{\BBOQ}\APACrefatitle {A “Pseudopolynomial” Algorithm for Sequencing Jobs
  to Minimize Total Tardiness} {A “pseudopolynomial” algorithm for
  sequencing jobs to minimize total tardiness}.{\BBCQ}
\newblock
 P.~Hammer, E.~Johnson, B.~Korte\BCBL {}\ \BBA {} G.~Nemhauser\ (\BEDS),
  \APACrefbtitle {Studies in \uppercase{I}nteger \uppercase{P}rogramming}
  {Studies in \uppercase{I}nteger \uppercase{P}rogramming}\ (\BVOL~1, \BPGS\
  331--342).
\newblock
\APACaddressPublisher{}{Elsevier}.
\newblock

\newblock

\PrintBackRefs{\CurrentBib}

\bibitem [\protect \citeauthoryear {%
Lawler%
\ \BBA {} Moore%
}{%
Lawler%
\ \BBA {} Moore%
}{%
{\protect \APACyear {1969}}%
}]{%
Lawler1969}
\APACinsertmetastar {%
Lawler1969}%
\begin{APACrefauthors}%
Lawler, E.L.%
\BCBT {}\ \BBA {} Moore, J.M.%
\end{APACrefauthors}%
\unskip\
\newblock
\APACrefYearMonthDay{1969}{}{}.
\newblock
{\BBOQ}\APACrefatitle {A Functional Equation and its Application to Resource
  Allocation and Sequencing Problems} {A functional equation and its
  application to resource allocation and sequencing problems}.{\BBCQ}
\newblock
\APACjournalVolNumPages{Management Science}{16}{1}{77--84,}
\newblock
\begin{APACrefDOI} \doi{10.1287/mnsc.16.1.77} \end{APACrefDOI}
\newblock

\newblock

\PrintBackRefs{\CurrentBib}

\bibitem [\protect \citeauthoryear {%
Lenstra%
, {Rinnooy Kan}%
\BCBL {}\ \BBA {} Brucker%
}{%
Lenstra%
\ \protect \BOthers {.}}{%
{\protect \APACyear {1977}}%
}]{%
Lenstra1977}
\APACinsertmetastar {%
Lenstra1977}%
\begin{APACrefauthors}%
Lenstra, J.%
, {Rinnooy Kan}, A.%
\BCBL {} Brucker, P.%
\end{APACrefauthors}%
\unskip\
\newblock
\APACrefYearMonthDay{1977}{}{}.
\newblock
{\BBOQ}\APACrefatitle {Complexity of Machine Scheduling Problems} {Complexity
  of machine scheduling problems}.{\BBCQ}
\newblock
 P.~Hammer, E.~Johnson, B.~Korte\BCBL {}\ \BBA {} G.~Nemhauser\ (\BEDS),
  \APACrefbtitle {Studies in \uppercase{I}nteger \uppercase{P}rogramming}
  {Studies in \uppercase{I}nteger \uppercase{P}rogramming}\ (\BVOL~1, \BPGS\
  343--362).
\newblock
\APACaddressPublisher{}{Elsevier}.
\newblock

\newblock

\PrintBackRefs{\CurrentBib}

\bibitem [\protect \citeauthoryear {%
Lev%
\ \BBA {} Adiri%
}{%
Lev%
\ \BBA {} Adiri%
}{%
{\protect \APACyear {1984}}%
}]{%
LevAdiri1984_V_shop}
\APACinsertmetastar {%
LevAdiri1984_V_shop}%
\begin{APACrefauthors}%
Lev, V.%
\BCBT {}\ \BBA {} Adiri, I.%
\end{APACrefauthors}%
\unskip\
\newblock
\APACrefYearMonthDay{1984}{}{}.
\newblock
{\BBOQ}\APACrefatitle {V-shop scheduling} {V-shop scheduling}.{\BBCQ}
\newblock
\APACjournalVolNumPages{European Journal of Operational
  Research}{18}{1}{51--56,}
\newblock
\begin{APACrefDOI} \doi{10.1016/0377-2217(84)90260-1} \end{APACrefDOI}
\newblock

\newblock

\PrintBackRefs{\CurrentBib}

\bibitem [\protect \citeauthoryear {%
M{\"o}nch%
, Fowler%
, Dauz{\`e}re-P{\'e}r{\`e}s%
, Mason%
\BCBL {}\ \BBA {} Rose%
}{%
M{\"o}nch%
\ \protect \BOthers {.}}{%
{\protect \APACyear {2011}}%
}]{%
Monch2011surveySemiconductors}
\APACinsertmetastar {%
Monch2011surveySemiconductors}%
\begin{APACrefauthors}%
M{\"o}nch, L.%
, Fowler, J.W.%
, Dauz{\`e}re-P{\'e}r{\`e}s, S.%
, Mason, S.J.%
\BCBL {} Rose, O.%
\end{APACrefauthors}%
\unskip\
\newblock
\APACrefYearMonthDay{2011}{}{}.
\newblock
{\BBOQ}\APACrefatitle {A survey of problems, solution techniques, and future
  challenges in scheduling semiconductor manufacturing operations} {A survey of
  problems, solution techniques, and future challenges in scheduling
  semiconductor manufacturing operations}.{\BBCQ}
\newblock
\APACjournalVolNumPages{Journal of Scheduling}{14}{}{583--599,}
\newblock
\begin{APACrefDOI} \doi{10.1007/s10951-010-0222-9} \end{APACrefDOI}
\newblock

\newblock

\PrintBackRefs{\CurrentBib}

\bibitem [\protect \citeauthoryear {%
Pan%
\ \BBA {} Chen%
}{%
Pan%
\ \BBA {} Chen%
}{%
{\protect \APACyear {2003}}%
}]{%
Pan2003}
\APACinsertmetastar {%
Pan2003}%
\begin{APACrefauthors}%
Pan, J\BHBI H.%
\BCBT {}\ \BBA {} Chen, J\BHBI S.%
\end{APACrefauthors}%
\unskip\
\newblock
\APACrefYearMonthDay{2003}{}{}.
\newblock
{\BBOQ}\APACrefatitle {Minimizing makespan in re-entrant permutation
  flow-shops} {Minimizing makespan in re-entrant permutation
  flow-shops}.{\BBCQ}
\newblock
\APACjournalVolNumPages{Journal of the Operational Research
  Society}{54}{6}{642--653,}
\newblock
\begin{APACrefDOI} \doi{10.1057/palgrave.jors.2601556} \end{APACrefDOI}
\newblock

\newblock

\PrintBackRefs{\CurrentBib}

\bibitem [\protect \citeauthoryear {%
Pfund%
, Mason%
\BCBL {}\ \BBA {} Fowler%
}{%
Pfund%
\ \protect \BOthers {.}}{%
{\protect \APACyear {2006}}%
}]{%
pfund2006}
\APACinsertmetastar {%
pfund2006}%
\begin{APACrefauthors}%
Pfund, M.E.%
, Mason, S.J.%
\BCBL {} Fowler, J.W.%
\end{APACrefauthors}%
\unskip\
\newblock
\APACrefYearMonthDay{2006}{}{}.
\newblock
{\BBOQ}\APACrefatitle {Semiconductor Manufacturing Scheduling and Dispatching}
  {Semiconductor manufacturing scheduling and dispatching}.{\BBCQ}
\newblock
\BIn{} J.W.~Herrmann\ (\BED), \APACrefbtitle {Handbook of
  \uppercase{P}roduction \uppercase{S}cheduling} {Handbook of
  \uppercase{P}roduction \uppercase{S}cheduling}\ (\BPGS\ 213--241).
\newblock
\APACaddressPublisher{Boston, MA}{Springer US}.
\newblock

\newblock

\PrintBackRefs{\CurrentBib}

\bibitem [\protect \citeauthoryear {%
Pinedo%
}{%
Pinedo%
}{%
{\protect \APACyear {2022}}%
}]{%
Pinedo2022}
\APACinsertmetastar {%
Pinedo2022}%
\begin{APACrefauthors}%
Pinedo, M.%
\end{APACrefauthors}%
\unskip\
\newblock
\APACrefYear{2022}.
\newblock
\APACrefbtitle {Scheduling: \uppercase{T}heory, \uppercase{A}lgorithms, and
  \uppercase{S}ystems} {Scheduling: \uppercase{T}heory, \uppercase{A}lgorithms,
  and \uppercase{S}ystems}\ (\PrintOrdinal{6}\ \BEd).
\newblock
\APACaddressPublisher{Cham, CH}{Springer International Publishing}.
\newblock

\newblock

\PrintBackRefs{\CurrentBib}

\bibitem [\protect \citeauthoryear {%
Rothkopf%
}{%
Rothkopf%
}{%
{\protect \APACyear {1966}}%
}]{%
Rothkopf1966}
\APACinsertmetastar {%
Rothkopf1966}%
\begin{APACrefauthors}%
Rothkopf, M.H.%
\end{APACrefauthors}%
\unskip\
\newblock
\APACrefYearMonthDay{1966}{}{}.
\newblock
{\BBOQ}\APACrefatitle {Scheduling Independent Tasks on Parallel Processors}
  {Scheduling independent tasks on parallel processors}.{\BBCQ}
\newblock
\APACjournalVolNumPages{Management Science}{12}{5}{437--447,}
\newblock
\begin{APACrefDOI} \doi{10.1287/mnsc.12.5.437} \end{APACrefDOI}
\newblock

\newblock

\PrintBackRefs{\CurrentBib}

\bibitem [\protect \citeauthoryear {%
Sahni%
}{%
Sahni%
}{%
{\protect \APACyear {1976}}%
}]{%
Sahni1976}
\APACinsertmetastar {%
Sahni1976}%
\begin{APACrefauthors}%
Sahni, S.K.%
\end{APACrefauthors}%
\unskip\
\newblock
\APACrefYearMonthDay{1976}{}{}.
\newblock
{\BBOQ}\APACrefatitle {Algorithms for Scheduling Independent Tasks} {Algorithms
  for scheduling independent tasks}.{\BBCQ}
\newblock
\APACjournalVolNumPages{Journal of the ACM}{23}{1}{116--127,}
\newblock
\begin{APACrefDOI} \doi{10.1145/321921.321934} \end{APACrefDOI}
\newblock

\newblock

\PrintBackRefs{\CurrentBib}

\bibitem [\protect \citeauthoryear {%
Schwiegelshohn%
}{%
Schwiegelshohn%
}{%
{\protect \APACyear {2011}}%
}]{%
Schwiegelshohn2011}
\APACinsertmetastar {%
Schwiegelshohn2011}%
\begin{APACrefauthors}%
Schwiegelshohn, U.%
\end{APACrefauthors}%
\unskip\
\newblock
\APACrefYearMonthDay{2011}{}{}.
\newblock
{\BBOQ}\APACrefatitle {An alternative proof of the Kawaguchi–Kyan bound for
  the Largest-Ratio-First rule} {An alternative proof of the kawaguchi–kyan
  bound for the largest-ratio-first rule}.{\BBCQ}
\newblock
\APACjournalVolNumPages{Operations Research Letters}{39}{4}{255--259,}
\newblock
\begin{APACrefDOI} \doi{10.1016/j.orl.2011.06.007} \end{APACrefDOI}
\newblock

\newblock

\PrintBackRefs{\CurrentBib}

\bibitem [\protect \citeauthoryear {%
Shufan%
, Grinshpoun%
, Ikar%
\BCBL {}\ \BBA {} Ilani%
}{%
Shufan%
\ \protect \BOthers {.}}{%
{\protect \APACyear {2023}}%
}]{%
Shufan2023}
\APACinsertmetastar {%
Shufan2023}%
\begin{APACrefauthors}%
Shufan, E.%
, Grinshpoun, T.%
, Ikar, E.%
\BCBL {} Ilani, H.%
\end{APACrefauthors}%
\unskip\
\newblock
\APACrefYearMonthDay{2023}{}{}.
\newblock
{\BBOQ}\APACrefatitle {Reentrant flow shop with identical jobs and makespan
  criterion} {Reentrant flow shop with identical jobs and makespan
  criterion}.{\BBCQ}
\newblock
\APACjournalVolNumPages{International Journal of Production
  Research}{61}{1}{183--197,}
\newblock
\begin{APACrefDOI} \doi{10.1080/00207543.2021.1957505} \end{APACrefDOI}
\newblock

\newblock

\PrintBackRefs{\CurrentBib}

\bibitem [\protect \citeauthoryear {%
Smith%
}{%
Smith%
}{%
{\protect \APACyear {1956}}%
}]{%
Smith1956}
\APACinsertmetastar {%
Smith1956}%
\begin{APACrefauthors}%
Smith, W.E.%
\end{APACrefauthors}%
\unskip\
\newblock
\APACrefYearMonthDay{1956}{}{}.
\newblock
{\BBOQ}\APACrefatitle {Various optimizers for single-stage production} {Various
  optimizers for single-stage production}.{\BBCQ}
\newblock
\APACjournalVolNumPages{Naval Research Logistics Quarterly}{3}{1--2}{59--66,}
\newblock
\begin{APACrefDOI} \doi{10.1002/nav.3800030106} \end{APACrefDOI}
\newblock

\newblock

\PrintBackRefs{\CurrentBib}

\bibitem [\protect \citeauthoryear {%
Wang%
, Sethi%
\BCBL {}\ \BBA {} van~de Velde%
}{%
Wang%
\ \protect \BOthers {.}}{%
{\protect \APACyear {1997}}%
}]{%
Wang1997}
\APACinsertmetastar {%
Wang1997}%
\begin{APACrefauthors}%
Wang, M.Y.%
, Sethi, S.P.%
\BCBL {} van~de Velde, S.L.%
\end{APACrefauthors}%
\unskip\
\newblock
\APACrefYearMonthDay{1997}{}{}.
\newblock
{\BBOQ}\APACrefatitle {Minimizing Makespan in a Class of Reentrant Shops}
  {Minimizing makespan in a class of reentrant shops}.{\BBCQ}
\newblock
\APACjournalVolNumPages{Operations Research}{45}{5}{702--712,}
\newblock
\begin{APACrefDOI} \doi{10.1287/opre.45.5.702} \end{APACrefDOI}
\newblock

\newblock

\PrintBackRefs{\CurrentBib}

\bibitem [\protect \citeauthoryear {%
Wu%
, Huang%
, Wang%
\BCBL {}\ \BBA {} Zheng%
}{%
Wu%
\ \protect \BOthers {.}}{%
{\protect \APACyear {2022}}%
}]{%
Wu2022}
\APACinsertmetastar {%
Wu2022}%
\begin{APACrefauthors}%
Wu, K.%
, Huang, E.%
, Wang, M.%
\BCBL {} Zheng, M.%
\end{APACrefauthors}%
\unskip\
\newblock
\APACrefYearMonthDay{2022}{}{}.
\newblock
{\BBOQ}\APACrefatitle {Job scheduling of diffusion furnaces in semiconductor
  fabrication facilities} {Job scheduling of diffusion furnaces in
  semiconductor fabrication facilities}.{\BBCQ}
\newblock
\APACjournalVolNumPages{European Journal of Operational
  Research}{301}{1}{141--152,}
\newblock
\begin{APACrefDOI} \doi{10.1016/j.ejor.2021.09.044} \end{APACrefDOI}
\newblock

\newblock

\PrintBackRefs{\CurrentBib}

\bibitem [\protect \citeauthoryear {%
Yu%
\ \BBA {} Pinedo%
}{%
Yu%
\ \BBA {} Pinedo%
}{%
{\protect \APACyear {2020}}%
}]{%
Yu2020}
\APACinsertmetastar {%
Yu2020}%
\begin{APACrefauthors}%
Yu, T\BHBI S.%
\BCBT {}\ \BBA {} Pinedo, M.%
\end{APACrefauthors}%
\unskip\
\newblock
\APACrefYearMonthDay{2020}{}{}.
\newblock
{\BBOQ}\APACrefatitle {Flow shops with reentry: Reversibility properties and
  makespan optimal schedules} {Flow shops with reentry: Reversibility
  properties and makespan optimal schedules}.{\BBCQ}
\newblock
\APACjournalVolNumPages{European Journal of Operational
  Research}{282}{2}{478--490,}
\newblock
\begin{APACrefDOI} \doi{/10.1016/j.ejor.2019.09.036} \end{APACrefDOI}
\newblock

\newblock

\PrintBackRefs{\CurrentBib}

\bibitem [\protect \citeauthoryear {%
Zheng%
, He%
, Yang%
, Chu%
\BCBL {}\ \BBA {} Wang%
}{%
Zheng%
\ \protect \BOthers {.}}{%
{\protect \APACyear {2023}}%
}]{%
Zheng2023}
\APACinsertmetastar {%
Zheng2023}%
\begin{APACrefauthors}%
Zheng, S.%
, He, Z.%
, Yang, Z.%
, Chu, C.%
\BCBL {} Wang, N.%
\end{APACrefauthors}%
\unskip\
\newblock
\APACrefYearMonthDay{2023}{}{}.
\newblock
{\BBOQ}\APACrefatitle {Effective upper and lower bounds for a two-stage
  reentrant flexible flow shop scheduling problem} {Effective upper and lower
  bounds for a two-stage reentrant flexible flow shop scheduling
  problem}.{\BBCQ}
\newblock
\APACjournalVolNumPages{Computers \& Operations Research}{153}{}{106183,}
\newblock
\begin{APACrefDOI} \doi{10.1016/j.cor.2023.106183} \end{APACrefDOI}
\newblock

\newblock

\PrintBackRefs{\CurrentBib}

\end{thebibliography}

%\bibliography{bib_flowshops_reentry}% common bib file
%% if required, the content of .bbl file can be included here once bbl is generated
%%\input sn-article.bbl

%\section*{Notation}
%\begin{enumerate}
%    \item number of jobs: $n$, number of machines: $m$
%    \item schedule: $\Sched(I)$, if $I$ is clear from context: $\sigma$
%    \item WLRL schedule: $WLRL(I)$, optimal schedule: $OPT(I)$
%    \item cost function $\OFVV\big(\Sched(I) \big)$, $\OFVV\big(WLRL(I)\big)$, $\OFVV\big(OPT(I)\big)$
%    \item completion times: $C_j\big(\Sched(I') \big)$, $C_j\big(WLRL(I'') \big)$, $C(OPT(I))$
%   \item starting times: $S\big(\Sched(I)\big)$, $S\big(WLRL(I)\big)$, $S\big(OPT(I)\big)$
%    \item processing times: $p$
%    \item loop: $\ell_{jk}$, loop number: $\Lps$
%    \item weights: $w$, progression weight $\Wfl(\Pg)$
%    \item progression $\Pg$
%    \item instance $I$
%    \item First idle time $T_{idl}(\sigma_I)$
%\end{enumerate}

\end{document}